\documentclass[12pt]{amsart}
\usepackage{amscd,amssymb}
\usepackage{amsthm,amsmath,enumerate,amssymb}
\usepackage[matrix,arrow]{xy}
\usepackage{enumerate}
\usepackage{longtable}
\usepackage{comment}

\sloppy\pagestyle{plain}
\textwidth=16cm \textheight=21cm

\addtolength{\topmargin}{-0cm} \addtolength{\oddsidemargin}{-2cm}
\addtolength{\evensidemargin}{-2cm}

\newtheorem*{example*}{Example}

\newtheorem{theorem}[equation]{Theorem}
\newtheorem{lemma}[equation]{Lemma}
\newtheorem{corollary}[equation]{Corollary}
\newtheorem{proposition}[equation]{Proposition}

\newtheorem*{conjecture*}{Conjecture}

\newtheorem*{question*}{Question}

\newtheorem*{problem*}{Problem}
\newtheorem*{theorem*}{Theorem}

\theoremstyle{remark}
\newtheorem{remark}[equation]{Remark}
\newtheorem*{remark*}{Remark}

\newcommand{\Ric}{{\operatorname{Ric}}}
\newcommand{\vol}{{\operatorname{vol}}}
\newcommand{\idd}{\sqrt{-1}\partial\bar{\partial}}
\newcommand{\osc}{{\operatorname{osc}}}
\newcommand{\tr}{{\operatorname{tr}}}
\newcommand{\K}{\text{K\"ahler }}

\makeatletter\@addtoreset{equation}{section} \makeatother

\usepackage{hyperref}
\hypersetup{colorlinks,linkcolor={blue},citecolor={red}}

\title{Some refinements of the partial $C^0$ estimate}

%\date{\today}

\begin{document}
\pagestyle{headings}

\author[K.~Zhang]{Kewei Zhang}
\address{Beijing International Center for Mathematical Research, Peking University.}
\email{kwzhang@pku.edu.cn}

\maketitle

\begin{abstract}
    Relying on the recent work of Liu-Sz\'ekelyhidi we give a weak asymptotic estimate for the Bergman kernels of polarized K\"ahler manifolds with Ricci lower bound and Sobolev constant upper bound. We will also give a simple proof for the partial $C^0$ estimate along the (generalized) K\"ahler-Ricci flow on Fano manifolds.
\end{abstract}

\section{Introduction}
The goal of this paper is to give some refinements and applications on the topic of partial $C^0$ estimates for polarized K\"ahler manifolds.

\subsection{Background}
\hfill

Thoughout this paper, we denote by $(X,\omega,L,h)$ a polarized K\"ahler manifold, where $X$ is an $n$-dimensional ($n\geq2$) compact K\"ahler manifold, $L$ is an ample line bundle on $X$, $\omega$ is a K\"ahler form in the class $2\pi c_1(L)$ and $h$ is a smooth Hermitian metric on $L$ such that its Chern curvature form $R_h$ satisfies
$$R_h:=-\idd\log h=\omega.$$
Since $\frac{1}{2\pi}\omega$ lies in a positive integral class, the volume of $(X,\omega)$ is automatically non-collapsed.

For any $k\in\mathbb{N}_+$, using $\omega$ and $h$, we define an $L^2$ Hermitian inner product $\langle\cdot\ ,\cdot\rangle$ on the vector space $H^0(X,L^k)$ by setting
\begin{equation}
    \label{equation:Hermitian-inner-prod}
    \langle s_1,s_2 \rangle:=\int_X(s_1,s_2)_{h^k}\frac{(k\omega)^n}{n!},\ \forall s_1,s_2\in H^0(X,L^k).
\end{equation}

% \textbf{(Note that there is an additional $k^n$ factor in \eqref{equation:Hermitian-inner-prod}! See also Remark \ref{remark:k^n}.)}

Using this Hermitian inner product, we can pick an orthonormal basis $s_0,s_1,...,s_{N_k}$ of $H^0(X,L^k)$, where $1+N_k=h^0(X,L^k)$. We define the Bergman kernel of $(X,\omega,L,h)$ with multiple $k$ to be
\begin{equation}
    \label{equation:definition-Bergman-Kernel}
    \rho_{\omega,k}(x):=\sum_{i=0}^{N_k}|s_i|^2_{h^k}(x),\ x\in X.
\end{equation}
Note that $\rho_{\omega,k}$ is independent of the choices of $h$ and the orthonormal basis.

It is well-known that we have the following asymptotic behavior of $\rho_{\omega,k}$ as $k\rightarrow\infty$:
\begin{equation}
    \label{equation:Bergman-Kernel-asymp}
    \rho_{\omega,k}\sim\frac{1}{(2\pi)^n}\big(1+\frac{S(\omega)}{2}k^{-1}+O(k^{-2})\big),
\end{equation}
where $S(\omega)$ denotes the scalar curvature of $\omega$ (see, e.g., \cite{T,R98,Z98,L00,C97}). 
%This result can be thought of a pointwise Riemann-Roch theorem for the polarized pair $(X,L)$.
In other words, the Bergman kernel is almost a constant
on a given polarized \K manifold once $k$ is sufficiently large. But a priori it is not clear at all how large this $k$ should be. In many circumstances, we are faced with a \textit{family} of polarized \K manifolds and we wish to derive a \emph{uniform bound} for their Bergman kernels with respect to some \emph{specific multiple} $k$. Such a uniform estimate for Bergman kernels is often referred to as the partial $C^0$ estimate, which first appeared in Tian's work \cite{T90} as a crucial tool in the study of the K\"ahler-Einstein problem.

The general question about the partial $C^0$ estimate is the following: given a family of polarized \K manifolds satisfying certain natural geometric conditions, to what extent can we control their Bergman kernels uniformly? For instance in \cite{T90'} Tian conjectured that for Fano manifolds with definite positive Ricci lower bound the Bergman kernel should have a uniform positive lower bound with respect to some bounded multiple $k$. It has been shown by Tian-Zhang \cite{TZ16} and Jiang \cite{J16} that, Tian's partial $C^0$ conjecture would hold if one can prove the Hamilton-Tian conjecture (which has been recently solved by Bamler \cite{B16} and Chen-Wang \cite{CW14} independently).

Regarding the partial $C^0$ estimate, significant progress and applications have been made by various authors in recent years.
% and we refer the reader to \cite{DS14,T13,} and the references therein for more details.
Here we only mention one recent result proved by Liu--Sz\'ekelyhidi, which will be needed for our later discussions.

\begin{theorem}
[\cite{LS18}]
\label{thm:LS}
Given $n$ and $D<\infty$, there exists $k=k(n,D)$ and $b=b(n,D)$ such that the following holds. For any polarized \K manifold $(X,\omega,L,h)$ satisfying $\Ric(\omega)>-\omega$ and $\text{Diam}(X,\omega)\leq D$, one has
$$\rho_{\omega,k}\geq b>0.$$
\end{theorem}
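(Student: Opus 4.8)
The plan is a compactness-and-contradiction argument combining the Gromov--Hausdorff theory of non-collapsed Ricci-limit spaces with H\"ormander's $L^2$-estimate for $\bar\partial$; this is the route of Donaldson--Sun, carried through under a one-sided Ricci bound by Liu--Sz\'ekelyhidi. Suppose the assertion fails. Negating the quantifiers and taking $k=j$, $b=1/j$ for $j\to\infty$, one obtains polarized K\"ahler manifolds $(X_j,\omega_j,L_j,h_j)$ with $\Ric(\omega_j)>-\omega_j$ and $\mathrm{Diam}(X_j,\omega_j)\le D$, together with points $p_j\in X_j$ such that $\rho_{\omega_j,k_j}(p_j)\to 0$ along $k_j=j\to\infty$. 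Since $\tfrac{1}{2\pi}[\omega_j]$ is integral we have $\int_{X_j}\omega_j^n=(2\pi)^nc_1(L_j)^n\ge(2\pi)^n$, so the volumes are uniformly bounded below; together with the Ricci lower bound and the diameter bound, Gromov precompactness and the Cheeger--Colding structure theory give, after passing to a subsequence, pointed convergence $(X_j,\omega_j,p_j)\to(Y,d_Y,y_\infty)$ to a non-collapsed limit space, with $Y=\mathcal R\sqcup\mathcal S$, where $\mathcal R$ is open and dense and carries a limiting complex structure and K\"ahler metric, and $\dim_{\mathcal H}\mathcal S\le 2n-2$ (one may also invoke the sharper regularity of Cheeger--Colding--Tian and Cheeger--Jiang--Naber).

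The core construction produces peak sections via H\"ormander's estimate. Fix a regular point $q\in\mathcal R$ and points $q_j\to q$. In the rescaled metric $k_j\omega_j$ the pointed geometry near $q_j$ converges to that of flat $\mathbb C^n$ and $L_j^{k_j}$ carries the corresponding model positively curved metric; gluing in the associated Gaussian peak section and cutting off yields an approximately holomorphic $\tilde\sigma_j\in C^\infty(X_j,L_j^{k_j})$ with $\|\tilde\sigma_j\|_{L^2}\sim 1$, $|\tilde\sigma_j|^2_{h_j^{k_j}}(q_j)\gtrsim 1$ and $\|\bar\partial\tilde\sigma_j\|_{L^2}=o(1)$. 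The positivity $\Ric(\omega_j)+k_j\omega_j>(k_j-1)\omega_j>0$ required in H\"ormander's estimate then holds with a lower bound of order $k_j$, so one obtains $u_j$ with $\bar\partial u_j=\bar\partial\tilde\sigma_j$ and $\|u_j\|_{L^2}=o(1)$, and an interior estimate gives $|u_j|^2_{h_j^{k_j}}(q_j)=o(1)$. Then $\sigma_j:=\tilde\sigma_j-u_j$ is holomorphic and witnesses $\rho_{\omega_j,k_j}(q_j)\ge|\sigma_j|^2_{h_j^{k_j}}(q_j)/\|\sigma_j\|_{L^2}^2\gtrsim 1$, so the Bergman kernel does not degenerate \emph{at regular points of the limit}.

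The genuine difficulty is that the bad points $p_j$ may converge into $\mathcal S$, where this construction breaks down. Following Donaldson--Sun and Liu--Sz\'ekelyhidi, the remedy is to run H\"ormander's estimate against an auxiliary weight $\varphi$ that is plurisubharmonic with a controlled logarithmic pole along a neighborhood of $\mathcal S$; the $L^2$-admissibility of $e^{-\varphi}$ rests precisely on the bound $\dim_{\mathcal H}\mathcal S\le 2n-2$ and on the measure estimates for tubular neighborhoods of $\mathcal S$ supplied by Cheeger--Colding and Cheeger--Jiang--Naber. Transplanting $\varphi$ to $X_j$ through the Gromov--Hausdorff approximation and repeating the previous step with this weight forces the corrected holomorphic section to be nonzero, with controlled $L^2$-norm, at $p_j$, contradicting $\rho_{\omega_j,k_j}(p_j)\to 0$. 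I expect the construction and uniform estimation of this weight near $\mathcal S$ --- together with establishing that the limiting complex and K\"ahler structure on $\mathcal R$ is regular enough for the peak-section construction, which is exactly the point where the one-sided Ricci bound is harder to handle than the Einstein case treated by Donaldson--Sun --- to be the main obstacle; the $\varepsilon$-regularity and almost-splitting theorems of Cheeger--Colding (and, for any statement uniform in $k$, Liu's three-circle theorem for holomorphic functions) are the analytic inputs behind it.
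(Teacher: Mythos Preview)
This theorem is not proved in the paper; it is quoted from \cite{LS18} and used as a black box. So there is no in-paper proof to compare against. What the paper does record is the local, cone-based version \cite[Proposition~3.1]{LS18} (stated as Proposition~\ref{proposition:Gabor-Liu}) and the passage from local to global (Propositions~\ref{proposition:exists-l<k-such-that-rho>b}--\ref{proposition:exists-large-D}): one blows up at a bad point until a ball is $\epsilon$-close to a metric cone, applies the local statement to get $\rho_{\omega,m}(p)\ge c$ for some \emph{bounded} $m\le K_0$, and then converts the point-dependent $m$ into a single multiple $D=(K_1)!$ via Lemma~\ref{lemma:rho_kl>rho_k}.

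Your outline captures the Donaldson--Sun architecture, but the contradiction is set up in a way that muddles the actual mechanism. By choosing $k_j=j\to\infty$ you are simultaneously letting the multiple diverge and the manifold vary; the weight $\varphi$ you propose, with a log pole along the singular set $\mathcal S$ of the \emph{unrescaled} limit $Y$, lives at scale $1$, while the peak sections of $L_j^{k_j}$ live at scale $k_j^{-1/2}\to 0$. These two scales do not match, and the Donaldson--Sun weight construction is designed for a fixed (or bounded) power, not a diverging one. The route actually taken in \cite{LS18}, and reproduced in Propositions~\ref{proposition:exists-l<k-such-that-rho>b}--\ref{proposition:exists-large-D}, avoids this by rescaling first to a cone and then producing a section of a bounded power determined by the cone. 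Separately, your bound $\dim_{\mathcal H}\mathcal S\le 2n-2$ (real codimension two) would not make $e^{-\varphi}$ locally integrable; one needs codimension at least four together with Minkowski-type measure bounds, and obtaining these under only a Ricci \emph{lower} bound is precisely the new $\epsilon$-regularity theorem of \cite{LS18} that you relegate to ``the main obstacle'' in your last paragraph. That theorem is the substance of the result, not a side issue.
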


This result extends the previous work of
Tian \cite{T90,T13} (in the K\"ahler--Einstein setting) and Donaldson-Sun \cite{DS14} (for general polarized pair with bounded Ricci curvature).
In this paper we will give some refinements and applications of Liu-Sz\'ekelyhidi's result. 

\subsection{Main results}
\hfill\\
Our first main result is a coarse asymptotic estimate for the Bergman kernel.

\begin{theorem}
\label{theorem:main-result}
Given $A<\infty$, there exists a large integer $D=D(n,A)$ and two constants $b=b(n,A)>0$, $B=B(n,A)<\infty$ such that the following holds.
Let $(X,\omega,L,h)$ be a polarized \K manifold that satisfies
\begin{enumerate}
     \item $C_S(X,\omega)\leq A$;
     \item $\Ric(\omega)>-\omega$.
\end{enumerate}
Then for any $k\in\mathbb{N}_+$, we have $b\leq\rho_{\omega,Dk}\leq B.$
\end{theorem}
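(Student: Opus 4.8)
The plan is to prove the two inequalities by different means, after a harmless rescaling. For $D,k\in\mathbb{N}_+$ replace $(X,\omega,L,h)$ by $(X,Dk\omega,L^{Dk},h^{Dk})$: this is again a polarized \K manifold, with $R_{h^{Dk}}=Dk\omega$, and its Bergman kernel at multiple $1$ is precisely $\rho_{\omega,Dk}$. Under the rescaling the curvature hypothesis improves to $\Ric(Dk\omega)>-\tfrac{1}{Dk}(Dk\omega)\ (\ge-Dk\omega)$, while the Sobolev constant, being scale invariant, still satisfies $C_S(X,Dk\omega)\le A$. The one geometric fact I extract from the hypotheses is a uniform \emph{non-collapsing} estimate $\vol_\omega(B(x,r))\ge\kappa(n,A)\,r^{2n}$ for all $x\in X$ and all small $r$; this follows from the Sobolev inequality alone (it is equivalent to an isoperimetric/Faber--Krahn inequality) together with the built-in volume lower bound, and requires no Ricci bound. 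After rescaling, $(X,Dk\omega)$ is therefore non-collapsed at all scales $\le r_0(n,A)\sqrt{Dk}$, which exceeds any prescribed constant once $D=D(n,A)$ is chosen large.

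For the upper bound, take $s\in H^0(X,L^{Dk})$ with $\|s\|=1$ and set $u:=|s|^2_{h^{Dk}}\ge0$. Away from the zero locus of $s$ one has $\idd\log|s|^2_{h^{Dk}}=-Dk\omega$, so $\Delta_{Dk\omega}u\ge-n\,u$ on all of $X$; that is, $u$ is a nonnegative subsolution of $(\Delta_{Dk\omega}+n)u\ge0$. Since $\Ric(Dk\omega)\ge-Dk\omega$ and $C_S\le A$, the De Giorgi--Nash--Moser mean value inequality on a unit geodesic ball gives
\[
u(x)\le\sup_{B_{Dk\omega}(x,1)}u\le\frac{C(n,A)}{\vol_{Dk\omega}\big(B_{Dk\omega}(x,2)\big)}\int_{B_{Dk\omega}(x,2)}u\,\frac{(Dk\omega)^n}{n!}\le\frac{C(n,A)}{\kappa(n,A)}=:B,
\]
using $\int_X u\,\frac{(Dk\omega)^n}{n!}=1$ and non-collapsing at scale $2$ (admissible because $D$ is large). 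Taking the supremum over $s$ yields $\rho_{\omega,Dk}\le B$.

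For the lower bound I would argue by contradiction, using Theorem~\ref{thm:LS}. If the bound failed there would be polarized \K manifolds $(X_j,\omega_j,L_j,h_j)$ satisfying (1)--(2), integers $k_j$ and points $x_j\in X_j$ with $\rho_{\omega_j,Dk_j}(x_j)\to0$. Put $\hat\omega_j:=Dk_j\omega_j$; then $\Ric(\hat\omega_j)\ge-\tfrac{1}{D}\hat\omega_j$ and $(X_j,\hat\omega_j)$ is uniformly non-collapsed, so after passing to a subsequence one has a pointed Gromov--Hausdorff limit $(X_j,\hat\omega_j,x_j)\to(Z,z_\infty)$ carrying the limiting Hermitian line bundles $(L_j^{Dk_j},h_j^{Dk_j})$ --- a non-collapsed Ricci-limit polarization (singular at $z_\infty$ in general, and Euclidean when $k_j\to\infty$) of exactly the sort analysed in \cite{LS18,DS14}. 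The Bergman-kernel lower bound there is produced by a H\"ormander $\bar\partial$-construction of a peak section that uses only the Ricci lower bound and the non-collapsing --- both available here, uniformly in $j$ --- and it forces $\liminf_j\rho_{\omega_j,Dk_j}(x_j)>0$, a contradiction. Choosing $D$ large ensures the non-collapsing scale of $(X_j,\hat\omega_j)$ is big enough to run this.

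The delicate point is exactly this last step: one cannot invoke Theorem~\ref{thm:LS} as a black box, since it yields a lower bound only at a single multiple $k(n,D')$ and demands a diameter bound, whereas here there is no diameter control and the conclusion must hold for \emph{every} multiple $Dk$. So the real work is to re-enter the proof of \cite{LS18}, check that the diameter hypothesis there serves only to furnish the non-collapsing that we have instead obtained from $C_S\le A$, and confirm that the resulting estimate is uniform as $k$ ranges over all of $\mathbb{N}_+$ (the regime $k\to\infty$, where the blow-up limit is $\mathbb{C}^n$ and one recovers the flat model kernel, being absorbed into the same compactness scheme). The upper bound and the non-collapsing lemma, by contrast, are routine.
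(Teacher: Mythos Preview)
Your upper bound is correct and matches the paper's approach (global Moser iteration from the Sobolev bound; cf.\ Lemma~\ref{lemma:bounds-on-sections}). The lower bound, however, has a genuine gap --- one you have correctly located in your last paragraph but not closed.

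After your rescaling to $\hat\omega_j = Dk_j\omega_j$ and $\hat L_j = L_j^{Dk_j}$, the peak-section machinery of \cite{LS18,DS14} does \emph{not} produce a section of $\hat L_j$ at $x_j$; it produces a section of $\hat L_j^{\,m}$ for some $m \leq K_0(n,A)$ dictated by the geometry of the limiting cone (this is Proposition~\ref{proposition:Gabor-Liu}). What you actually obtain is $\rho_{\hat\omega_j, m}(x_j) = \rho_{\omega_j,\, m D k_j}(x_j) \geq c$, not $\rho_{\omega_j, Dk_j}(x_j) \geq c$. Enlarging $D$ beforehand cannot absorb this $m$, since $m$ appears only \emph{after} the rescaling and depends on the limit point, which may vary with $j$. (Incidentally, your claim that the limit is Euclidean when $k_j\to\infty$ is unjustified: with only a Ricci lower bound and non-collapsing, the pointed limit is a general non-collapsed Ricci-limit space, not $\mathbb{C}^n$.)

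The paper closes this gap with two ingredients absent from your sketch. First, a multiplicativity estimate (Lemma~\ref{lemma:rho_kl>rho_k}): taking the $l$-th power of a peak section gives $\rho_{\omega, ml} \geq (\rho_{\omega,m})^l / (l^n B^{l-1})$. Second, a factorial trick: one first runs the compactness argument on the \emph{unrescaled} class $\mathcal{M}(n,A)$ to show that for every $(X,\omega)$ and every $p$ there is some $m_p \leq K_1(n,A)$ with $\rho_{\omega,m_p}(p) \geq \eta$ (Proposition~\ref{proposition:exists-l<k-such-that-rho>b}); then one sets $D := K_1!$, so that $m_p \mid D$ for every $p$, and the multiplicativity estimate upgrades the pointwise bound at $m_p$ to a uniform bound $\rho_{\omega,D} \geq b$ (Proposition~\ref{proposition:exists-large-D}). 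Only \emph{after} $D$ is fixed in this way does one rescale by $k$ and invoke $\rho_{k\omega,D} = \rho_{\omega,Dk}$ together with the scale-invariance of $\mathcal{M}(n,A)$ to conclude for all $k$. The order is essential: fix $D$ via the factorial, then rescale --- not the other way around.
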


\begin{remark}
In the above theorem, the condition $C_S(X,\omega)\leq A$ means that the following Sobolev inequality holds:
$$\bigg(\int_X |u|^{\frac{2n}{n-1}}\omega^n\bigg)^{\frac{n-1}{n}}\leq A\bigg(\int_Xu^2\omega^n+\int_X|\nabla u|^2\omega^n\bigg),\ \forall u\in W^{1,2}(X).$$

\end{remark}

Regarding Theorem \ref{theorem:main-result}, we note that, in \cite[Conjecture 5.15]{DS14}, a much sharper form is conjectured by Donaldson-Sun. For instance, the lower bound $b$ of $\rho_{\omega,Dk}$ should be almost optimal in the sense that, by increasing $D$ if necessary, we can choose $b$ to be arbitrarily close to the leading term $\frac{1}{(2\pi)^n}$ of \eqref{equation:Bergman-Kernel-asymp}. So Theorem \ref{theorem:main-result} gives a partial answer to this conjecture.

Our argument is inspired by the work of Bamler \cite{B16} and Chen-Wang \cite{CW14}. The key point is that, the conditions on the Sobolev constant and Ricci curvature are \textbf{preserved} if we rescale $(X,\omega,L,h)$ by large integers.
% (just as in \cite{B16}, the conditions on the $\mu$-entropy and scalar curvature are preserved if we blow up the Ricci flow). 
To be more precise, given any $k\in\mathbb{N}_+$, we may rescale $(X,\omega,L,h)$ in the following fashion. We put
$$\Tilde{\omega}:=k\omega,\ \Tilde{L}:=L^k,\ \Tilde{h}:=h^k.$$
Then $(X,\tilde{\omega},\tilde{L},\tilde{h})$ is again a polarized \K manifold (in this paper, any rescaling will be of this form). If $\omega$ satisfies
$$C_S(X,\omega)\leq A,\ 
\Ric(\omega)>-\omega.$$
Then it is direct to see that
$$C_S(X,\tilde{\omega})\leq A,\ \Ric(\tilde{\omega})>-\frac{1}{k}\tilde{\omega}.$$
In other words, if we define
$$
\mathcal{M}(n,A)=\left\{(X,\omega,L,h)\ \Bigg|\ %
\aligned
&\operatorname{dim}X=n\\
&\operatorname{Ric}(\omega)>-\omega\\
&C_s(X,\omega)\leq A 
\endaligned\Bigg.\right\},
$$
then the space $\mathcal{M}(n,A)$ is invariant under such rescalings.
It should be emphasized that, in this paper, whenever we raise $L$ to $L^k$, the underlying \K form $\omega$ will be rescaled to $k\omega$ accordingly, so that everything will work consistently (this justifies the volume form $(k\omega)^n$ in \eqref{equation:Hermitian-inner-prod}). 
This kind of treatment also appeared in, e.g., \cite{DS14,DS17}.

Another interesting rescaling property we shall use comes from the Bergman kernel itself. Indeed, $\rho_{\omega,k}$ enjoys the following rescaling property:
\begin{equation}
    \label{equation:rho-under-scaling}
    \rho_{l\omega,k}=\rho_{\omega,kl},\ \forall k,l\in\mathbb{N}_+.
\end{equation}
Here $\rho_{l\omega,k}$ denotes the Bergman kernel of $(X,l\omega,L^{l},h^l)$ with multiple $k$, so it follows directly from the definition that it coincides with the Bergman kernel of $(X,\omega,L,h)$ with multiple $kl$.
Note that, this simple property will play an important role in the proof of Theorem \ref{theorem:main-result}.

\begin{remark}
\label{remark:k^n}
If one prefers to use the following $L^2$ inner product:
$$ \langle s_1,s_2 \rangle=\int_X(s_1,s_2)_{h^k}\frac{\omega^n}{n!},\ \forall s_1,s_2\in H^0(X,L^k).$$
Then one would get an additional factor $k^n$ for the Bergman kernel $\rho_{\omega,k}$. In particular, in the statement of Theorem \ref{theorem:main-result}, one would have
$$bk^n\leq\rho_{\omega,Dk}\leq Bk^n,
 \forall k\in\mathbb{N}_+.$$
\end{remark}

The second main result of this paper is an application of Theorem \ref{thm:LS} to the partial $C^0$ estimate along the generalized K\"ahler-Ricci flow on Fano manifolds. Let $X$ be an $n$-dimensional compact K\"ahler manifold $X$ and $L$ an ample line bundle on $X$. Assume that there is a non-negative closed $(1,1)$ form $\alpha\in2\pi(c_1(X)-c_1(L))$. Note that in this setting, $X$ is automatically Fano.
We fix an initial metric $\omega_0\in 2\pi c_1(L)$ and consider the following generalized K\"ahler-Ricci flow
\begin{equation}
   \label{equation:generalized KRF}
    \frac{\partial}{\partial t}\omega_t=-\Ric(\omega_t)+\omega_t+\alpha
\end{equation}
starting from $\omega_0$. 
We show that the partial $C^0$ estimate holds along this flow.

\begin{theorem}
\label{theorem:main-tesult}
There exists a positive constant $b=b(n,\omega_0,\alpha)$ and a large integer $k=k(n,\omega_0,\alpha)$ such that
$$\rho_{\omega_t,k}\geq b>0$$
along the flow \eqref{equation:generalized KRF}.
\end{theorem}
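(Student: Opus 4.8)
The plan is to reduce Theorem~\ref{theorem:main-tesult} to an application of Liu--Sz\'ekelyhidi's Theorem~\ref{thm:LS}. For that I need to verify that along the generalized K\"ahler--Ricci flow \eqref{equation:generalized KRF} the metrics $\omega_t$ stay in a compact family in the sense required by Theorem~\ref{thm:LS}: namely, a uniform Ricci lower bound $\Ric(\omega_t) > -\omega_t$ (up to a harmless rescaling) and a uniform diameter upper bound $\mathrm{Diam}(X,\omega_t) \le D$.

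\textbf{Step 1: Ricci lower bound.} First I would observe that the flow \eqref{equation:generalized KRF} preserves the cohomology class $2\pi c_1(L)$, since $-\Ric(\omega_t) + \omega_t + \alpha$ represents $-2\pi c_1(X) + 2\pi c_1(L) + 2\pi(c_1(X) - c_1(L)) = 0$ in cohomology. Next, writing $\omega_t = \omega_0 + \idd \varphi_t$ and passing to the potential level, the flow becomes a parabolic complex Monge--Amp\`ere equation, and one has the standard evolution equation for the twisted Ricci potential. The key point is a maximum-principle argument: I would compute the evolution of the quantity $\mathrm{tr}_{\omega_t}(\omega_0)$ or, more directly, control $\Ric(\omega_t)$ from below using the non-negativity of $\alpha$. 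Since $\alpha \ge 0$, rewriting \eqref{equation:generalized KRF} as $\Ric(\omega_t) = \omega_t + \alpha - \frac{\partial}{\partial t}\omega_t \ge \omega_t - \frac{\partial}{\partial t}\omega_t$, the problem reduces to bounding $\frac{\partial}{\partial t}\omega_t$ from above, which follows from the standard Li--Yau / Perelman-type estimates for the (generalized) K\"ahler--Ricci flow: one obtains a constant $C = C(n,\omega_0,\alpha)$ with $\Ric(\omega_t) > -C\omega_t$ uniformly in $t$. After the rescaling $\omega \mapsto C\omega$, $L \mapsto L^{C}$ (rounding $C$ up to an integer, which only worsens the constant), the condition $\Ric > -\omega$ holds.

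\textbf{Step 2: Diameter bound.} For the uniform diameter bound I would invoke Perelman's non-collapsing and diameter estimates, which have been extended to the generalized K\"ahler--Ricci flow setting (e.g.\ by Tian--Zhang and others): along \eqref{equation:generalized KRF} one has $\mathrm{Diam}(X,\omega_t) \le D$ for a constant $D = D(n,\omega_0,\alpha)$, using that the twisting form $\alpha$ is fixed and non-negative. Combined with Step~1, the rescaled metrics $C\omega_t$ all lie in the family covered by Theorem~\ref{thm:LS}, so there exist $k_0 = k_0(n,D)$ and $b_0 = b_0(n,D) > 0$ with $\rho_{C\omega_t, k_0} \ge b_0$. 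Finally, using the rescaling property \eqref{equation:rho-under-scaling}, namely $\rho_{C\omega_t, k_0} = \rho_{\omega_t, C k_0}$, we conclude with $k := C k_0$ and $b := b_0$.

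\textbf{Main obstacle.} The genuinely nontrivial input is Step~1 together with Step~2: establishing the \emph{uniform} (time-independent) Ricci lower bound and diameter bound along the generalized flow. For the unnormalized or normalized K\"ahler--Ricci flow on Fano manifolds these are by now standard consequences of Perelman's estimates, but here the twisting by $\alpha$ must be carried through carefully; the non-negativity assumption $\alpha \ge 0$ is exactly what makes the maximum-principle comparisons and the monotonicity formulas go through. Once these two geometric bounds are in hand, the passage to the partial $C^0$ estimate is immediate from Theorem~\ref{thm:LS} and the scaling identity \eqref{equation:rho-under-scaling}, so the heart of the proof is purely about a priori estimates for the flow, not about Bergman kernels.
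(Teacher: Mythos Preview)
Your Step~1 contains a genuine gap: there is \emph{no} uniform Ricci lower bound along the (generalized) K\"ahler--Ricci flow in general, and none of the Perelman-type estimates you invoke provide one. What Perelman's argument gives (Theorem~\ref{theorem:Perelman-estimate-along-generalized KRF}) is control of the \emph{scalar} curvature, the Ricci potential $f_{\omega_t}$, its gradient and Laplacian, and the diameter---but never of the full Ricci tensor from below. Your heuristic ``$\Ric(\omega_t)=\omega_t+\alpha-\partial_t\omega_t$, so it suffices to bound $\partial_t\omega_t$ from above as a $(1,1)$-form'' fails for exactly this reason: $\partial_t\omega_t=-\Ric(\omega_t)+\omega_t+\alpha$ is not bounded above as a form, only its trace is. This is precisely the obstacle the paper flags in the introduction (``the Ricci curvature along the flow usually does not have a uniform lower bound. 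So Theorem~\ref{thm:LS} is not directly applicable'').

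The paper's actual route is therefore quite different from yours. Instead of working with $\omega_t$, it applies the Calabi--Yau theorem at each time to produce an auxiliary metric $\eta_t\in[\omega_0]$ solving $\Ric(\eta_t)=\omega_t+\alpha$, which automatically has $\Ric(\eta_t)>0$. The nontrivial work (Section~\ref{section:calabi-yau-along-flow}) is then to bound $\osc\phi_t$ (where $\eta_t=\omega_t-\idd\phi_t$) via Yau's Moser iteration combined with Futaki's weighted Poincar\'e inequality, and from this to extract a uniform diameter bound for $\eta_t$ using relative volume comparison. Theorem~\ref{thm:LS} is then applied to $\eta_t$, not to $\omega_t$, yielding $\rho_{\eta_t,k}\ge b$. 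The final step (Lemma~\ref{lemma:equiv-of-Bergman-kernel}) transfers this back to $\rho_{\omega_t,k}$ by observing that the volume forms $\eta_t^n=e^{f_{\omega_t}}\omega_t^n$ and the Hermitian metrics differ only by uniformly bounded conformal factors. Your Step~2 (the diameter bound for $\omega_t$) is correct but, without a Ricci lower bound for $\omega_t$, it does not by itself place $\omega_t$ in the scope of Theorem~\ref{thm:LS}.
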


To derive such estimates, in principle one needs to understand the space of generalized KRF, which is a highly nontrivial problem. Indeed, for the classical KRF (when $L=-K_X$ and $\alpha=0$), Theorem \ref{theorem:main-tesult} had remained unknown for some time and was studied by various authors (see, e.g., \cite{CW12,TZ16,J16}). As shown in \cite{TZ16}, the partial $C^0$ estimate along the KRF will follow from the Hamilton-Tian conjecture. For instance, relying on the $L^4$ bound of the Ricci curvature, Tian-Zhang \cite{TZ16} proved the Hamilton-Tian conjecture for dimension $n\leq3$ and hence the partial $C^0$ estimate follows in this case (for $n=2$, this was also proved by Chen-Wang \cite{CW12} using the $L^2$ bound of the curvature tensor). For general dimensions, the Hamilton-Tian conjecture was solved in \cite{B16,CW14} so the partial $C^0$ estimate holds as a consequence. So to prove Theorem \ref{theorem:main-tesult}, one possible approach is to establish the Hamilton-Tian compactness along the generalized flow \eqref{equation:generalized KRF}, which however seems to be out of reach at present. When studying the (generalized) KRF, the main difficulty comes from the lack of suitable curvature control. For instance the Ricci curvature along the flow usually does not have a uniform lower bound. So Theorem \ref{thm:LS} is not directly appliable in this case. To avoid this issue, we use the following strategy. By applying the Calabi-Yau theorem along the KRF we can get a family of K\"ahler metrics with positive Ricci curvature. Then we derive a uniform diameter upper bound for these metrics so that we can apply Theorem \ref{thm:LS} to get the partial $C^0$ estimate  (see Section \ref{sec:KRF} for more details).

As an application of Theorem \ref{theorem:main-tesult}, we obtain the following result, using Jiang's approach \cite{J16}.
\begin{theorem}\label{theorem:PC0-for-scalar-lower-bound}
Given $(X,\omega_0,\alpha)$ as above. Suppose that
\begin{itemize}
    \item The scalar curvature $R(\omega_0)$ satisfies $R(\omega_0)-tr_{\omega_0}\alpha\geq-\Lambda$ for some constant $\Lambda\geq0$;
    \item$(X,\omega_0)$ satisfies the following $L^2$-Sobolev inequality: 
    \begin{equation*}
        % \label{equation:strong-Sobolev-ineq}
        \bigg(\int_X|f|^{\frac{2n}{n-1}}\omega_0^n\bigg)^{\frac{n-1}{n}}\leq C_S \int_X|\nabla f|^2\omega^n_0
    \end{equation*}
    for any $f\in W^{1,2}(X)$ with $\int_Xf\omega^n_0=0$;
    \item Let $V=\int_X\omega^n_0$ be the volume of the K\"ahler class.
\end{itemize}
Then there exists a large integer $k$ and a constant $b>0$ such that
$$\rho_{\omega_0,k}\geq b>0.$$
Here $k$ and $b$ only depend on $n,\Lambda,C_S$ and $V$.
\end{theorem}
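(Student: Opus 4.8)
The plan is to reduce Theorem \ref{theorem:PC0-for-scalar-lower-bound} to Theorem \ref{theorem:main-tesult} by running the generalized K\"ahler-Ricci flow \eqref{equation:generalized KRF} starting from $\omega_0$ and showing that the hypotheses of Theorem \ref{theorem:main-tesult} are satisfied, while simultaneously controlling how fast the Bergman kernel can deteriorate as $t$ decreases back to $0$. The first step is to observe that, by Theorem \ref{theorem:main-tesult}, there is an integer $k=k(n,\omega_0,\alpha)$ and $b_0=b_0(n,\omega_0,\alpha)>0$ with $\rho_{\omega_t,k}\geq b_0$ for all $t$; in particular this holds for some fixed small $t_0>0$. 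The issue is that the constants produced by Theorem \ref{theorem:main-tesult} depend on all of $\omega_0$, not just on $n,\Lambda,C_S,V$, so I cannot simply quote it at $t=t_0$. Instead I would use Jiang's strategy from \cite{J16}: estimate $\rho_{\omega_0,k}$ in terms of $\rho_{\omega_{t_0},k}$ by controlling the evolution of the relevant quantities along the flow on the short interval $[0,t_0]$, using \emph{only} the scalar-curvature lower bound, the Sobolev inequality, and the volume.

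Concretely, the second step is to write the flow in terms of K\"ahler potentials, $\omega_t=\omega_0+\idd\varphi_t$, so that $\dot\varphi_t$ satisfies a Monge-Amp\`ere-type evolution equation whose right-hand side is governed by $\log\frac{\omega_t^n}{\omega_0^n}$ together with the fixed data $\omega_0,\alpha$. The key analytic input is a parabolic Moser iteration / maximum principle argument: the condition $R(\omega_0)-\tr_{\omega_0}\alpha\geq-\Lambda$ propagates along the flow (one differentiates the evolution equation for $R(\omega_t)-\tr_{\omega_t}\alpha$ and applies the maximum principle) to give a uniform lower bound $R(\omega_t)-\tr_{\omega_t}\alpha\geq -\Lambda'$, possibly with $\Lambda'$ depending also on $t$; integrating this in time and combining with the flow equation $\frac{\partial}{\partial t}\log\frac{\omega_t^n}{\omega_0^n}=\tr_{\omega_t}\alpha-R(\omega_t)+n$ controls $\osc_X\,(\varphi_t-\varphi_0)$ and, via the Sobolev inequality and a Green's-function estimate (here the normalization $\int_X f\omega_0^n=0$ in the hypothesis is used), gives a two-sided bound $e^{-C}\omega_0^n\leq\omega_{t_0}^n\leq e^{C}\omega_0^n$ with $C=C(n,\Lambda,C_S,V,t_0)$.

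The third step converts this volume-form comparison into a Bergman-kernel comparison. Since $\rho_{\omega,k}(x)=\sup_{s\in H^0(X,L^k)}|s|^2_{h^k}(x)/\|s\|^2_{L^2(\omega)}$ (the standard extremal characterization), and the $L^2$ norms with respect to $\omega_0$ and $\omega_{t_0}$ differ by the factor coming from the volume-form ratio (note that changing $\omega$ in the class $2\pi c_1(L)$ does not change the underlying line bundle $L^k$ or its holomorphic sections, only the metric $h^k$ and the integration measure), one gets
\[
\rho_{\omega_0,k}\geq e^{-C'}\,\rho_{\omega_{t_0},k}\geq e^{-C'} b_0,
\]
but this still has the bad dependence through $b_0$. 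To remove it, the cleaner route — and the one I would ultimately pursue — is to bypass Theorem \ref{theorem:main-tesult} entirely and instead run the flow to produce, at time $t_0$, a metric with $\Ric(\omega_{t_0})>-\omega_{t_0}$ (using the Calabi-Yau trick exactly as in the proof of Theorem \ref{theorem:main-tesult} described in Section \ref{sec:KRF}: solve $\Ric(\tilde\omega_{t_0})=\omega_{t_0}+\alpha_{t_0}$ for a nearby metric $\tilde\omega_{t_0}$ with positive Ricci) together with a diameter bound $\mathrm{Diam}(X,\tilde\omega_{t_0})\leq D$, where now $D$, and the comparison constants between $\tilde\omega_{t_0}$, $\omega_{t_0}$, $\omega_0$, depend \emph{only} on $n,\Lambda,C_S,V$ — this is where the scalar lower bound, Sobolev inequality and volume do the work, via Li-Yau/Cheeger-Colding-type estimates fed by the uniform $L^\infty$ control on $\log\frac{\omega_t^n}{\omega_0^n}$. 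Then Theorem \ref{thm:LS} applied to $(X,\tilde\omega_{t_0},L,h)$ gives $\rho_{\tilde\omega_{t_0},k}\geq b>0$ with $k,b$ depending only on $n,D$, hence only on $n,\Lambda,C_S,V$, and the volume-form comparison of the third step transfers this lower bound to $\rho_{\omega_0,k}$.

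\textbf{Main obstacle.} The crux is the second step: obtaining the lower bound on $R(\omega_t)-\tr_{\omega_t}\alpha$ and, more seriously, upgrading it to a genuine two-sided $L^\infty$ bound on the potential $\varphi_t-\varphi_0$ (equivalently on $\log\frac{\omega_t^n}{\omega_0^n}$) with constants depending only on $n,\Lambda,C_S,V$ and not on higher-order geometry of $\omega_0$. The maximum-principle argument for the scalar curvature is delicate because the evolution of $R(\omega_t)-\tr_{\omega_t}\alpha$ along \eqref{equation:generalized KRF} contains a term quadratic in the Ricci (or in $\omega_t$-traces) that needs to be absorbed, and one must be careful that $\alpha$ being only non-negative (not strictly positive, and its derivatives uncontrolled in the final constants) does not spoil the estimate; this is exactly the point where Jiang's argument in \cite{J16} is technical, and adapting it to the generalized flow with the extra term $\alpha$ is the place where the real work lies. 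The diameter estimate for $\tilde\omega_{t_0}$ and the Calabi-Yau perturbation, by contrast, follow the template already set up in Section \ref{sec:KRF} once the $L^\infty$ potential bound is in hand.
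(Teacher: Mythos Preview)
Your overall architecture matches the paper's exactly: run the generalized KRF from $\omega_0$, obtain the partial $C^0$ estimate at a fixed positive time slice with constants depending only on $(n,\Lambda,C_S,V)$, then transfer the lower bound back to $t=0$ via Jiang's comparison argument \cite[Theorem 5.8]{J16}. You also correctly locate the crux: ensuring that the constants produced at positive time depend only on the stated data, not on higher-order geometry of $\omega_0$.

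Where your mechanism diverges from the paper is in how that crux is handled. You propose to control $\osc_X(\varphi_t-\varphi_0)$ and $\log(\omega_t^n/\omega_0^n)$ directly, via the maximum principle for the twisted scalar curvature and integration in time. Note that integrating the scalar lower bound in the evolution $\partial_t\log(\omega_t^n/\omega_0^n)=-(R(\omega_t)-\tr_{\omega_t}\alpha)+n$ only yields an \emph{upper} bound on $\log(\omega_t^n/\omega_0^n)$; the matching lower bound does not come for free, and without it neither the Bergman kernel comparison nor the diameter/Ricci-potential bounds at time $t_0$ are under control. The paper (following \cite{J16}) avoids this issue entirely: it does not estimate $\varphi_t$ or the volume-form ratio directly. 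Instead it uses Ye's Sobolev inequality along the flow (with constants depending only on $n,V,\Lambda,C_S$), and then the key regularization estimate
\[
|\Delta f_{\omega_t}|+|\nabla f_{\omega_t}|^2\leq \frac{C}{t^{n+1}},\qquad t\in(0,1],
\]
obtained by Moser iteration together with the Green's function lower bound (Lemma \ref{lemma:green-function-lower-bound}). The point is that at $t\in[\tfrac12,1]$ the quantities $|\nabla f_{\omega_t}|$ and $|\Delta f_{\omega_t}|$ --- precisely the inputs on which the constants in Theorem \ref{theorem:Perelman-estimate-along-generalized KRF} depend --- are now bounded purely in terms of $n,\Lambda,C_S,V$. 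One then re-runs the proof of Theorem \ref{theorem:main-tesult} on $[\tfrac12,1]$, and the Calabi--Yau trick / diameter estimate go through with the desired dependence. So your ``cleaner route'' at the end is the right one, but the missing ingredient that makes it work is Jiang's gradient/Laplacian regularization of the Ricci potential, not a two-sided $L^\infty$ bound on $\varphi_t$.
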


\begin{remark}
Many arguments contained in this paper should be well-known to experts. But we did not find the precise reference so we write down the details for reader's convenience.  Also note that, after completing the first draft, the author was informed that our approach to the partial $C^0$ estimate along the KRF had been aware of by Tian and Zhenlei Zhang as well.
\end{remark}

The rest of this paper is organized as follows.
% In Section \ref{section:priliminaries}, we recall some standard facts that will be useful for us. 
In Section \ref{sec:asymp} we prove Theorem \ref{theorem:main-result} and extend the argument to K\"ahler-Ricci solitons. In Section \ref{sec:KRF} we prove Theorem \ref{theorem:main-tesult} and Theorem \ref{theorem:PC0-for-scalar-lower-bound}.

\smallskip
\textbf{Acknowledgments.}
The author wants to express his deep gratitude to Profs. Gang Tian, Xiaohua Zhu and Yanir Rubinstein for many inspiring discussions and for encouraging him to publish this work. Thanks also go to Wenshuai Jiang, Feng Wang, Zhenlei Zhang, Yalong Shi and Wangjian Jian for valuable comments.
The author is sponsored by the China post-doctoral grant BX20190014.

\section{Weak asymptotics of Bergman kernels}
\label{sec:asymp}
This section is devoted to the proof of Theorem \ref{theorem:main-result}. A similar result for K\"ahler-Ricci solitons will be proved in the end of this section as well. Let us begin with some preliminaries.
\subsection{Preparations}
\label{section:priliminaries}
\hfill\\
In this part, we collect some standard results in the literature that will be used in our argument.

First we recall that the Sobolev bound implies volume non-collapsing.
\begin{lemma}[\cite{H00}]
\label{lemma:kappa-noncollapsing}
For any $A<\infty$, there exists a constant $\kappa=\kappa(n,A)>0$ such that the following holds.
Let $(X,g)$ be a compact Riemannian manifold. Assume that $C_S(X,g)\leq A$, then we have
$$\vol(B(x,1))\geq \kappa,\ \forall x\in X.$$
\end{lemma}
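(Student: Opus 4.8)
The plan is to reduce the statement to a \emph{reverse volume doubling} inequality obtained by feeding a cutoff function into the Sobolev inequality, and then to iterate that inequality across dyadic scales. Write $m=\dim_{\mathbb R}X$ (so $m=2n$ in the K\"ahler situations of interest) and let $dV$ denote the Riemannian volume form; the hypothesis $C_S(X,g)\le A$ is the statement
\[
\Big(\int_X|u|^{\frac{2m}{m-2}}\,dV\Big)^{\frac{m-2}{m}}\le A\Big(\int_Xu^2\,dV+\int_X|\nabla u|^2\,dV\Big),\qquad u\in W^{1,2}(X).
\]
Fix $x\in X$ and set $V(r):=\vol(B(x,r))$. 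For $0<r\le1$, apply this inequality to the Lipschitz cutoff $u$ that equals $1$ on $B(x,r/2)$, vanishes outside $B(x,r)$, and satisfies $|\nabla u|\le 2/r$. Since $u\ge\mathbf{1}_{B(x,r/2)}$ the left-hand side is at least $V(r/2)^{\frac{m-2}{m}}$, while the right-hand side is at most $A\,V(r)\,(1+4r^{-2})\le 5A\,r^{-2}V(r)$. Hence, with $A':=5A$ and $\theta:=\frac{m-2}{m}\in(0,1)$,
\[
V(r/2)^{\theta}\le \frac{A'}{r^2}\,V(r),\qquad\text{equivalently}\qquad V(r)\ge \frac{r^2}{A'}\,V(r/2)^{\theta},\qquad 0<r\le1.
\]

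Next I would iterate this at the scales $r=2^{-i}$, $i=0,1,\dots,j-1$, which gives, by an immediate induction, for every $j\in\mathbb N$,
\[
V(1)\ \ge\ \Big[\prod_{i=0}^{j-1}\Big(\tfrac{2^{-2i}}{A'}\Big)^{\theta^{i}}\Big]\cdot V(2^{-j})^{\theta^{j}}.
\]
Because $\theta<1$, both $\sum_i\theta^{i}$ and $\sum_i i\,\theta^{i}$ converge, so the bracketed product converges as $j\to\infty$ to a strictly positive constant $c_1=c_1(m,A)$ depending only on $m$ and $A$. It remains to control $V(2^{-j})^{\theta^{j}}$ as $j\to\infty$. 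On the one hand $V(2^{-j})\le\vol(X)<\infty$; on the other hand, since every Riemannian manifold is infinitesimally Euclidean, $\lim_{r\to0}V(r)/r^{m}$ equals the volume $\omega_m$ of the unit ball in $\mathbb R^{m}$, so $V(2^{-j})\ge\frac{\omega_m}{2}\,2^{-jm}$ once $j\ge j_0(x)$. In either estimate $\log V(2^{-j})$ grows at most linearly in $j$, hence $\theta^{j}\log V(2^{-j})\to0$ and $V(2^{-j})^{\theta^{j}}\to1$. Letting $j\to\infty$ in the displayed inequality yields $V(1)\ge c_1(m,A)=:\kappa>0$, which is the claim (with $\kappa=\kappa(n,A)$ since $m=2n$).

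The one point that needs care — and the reason a finite iteration does not suffice — is that the Euclidean lower bound $V(r)\gtrsim r^{m}$ is only available below a threshold $2^{-j_0(x)}$ that is \emph{not uniform} in $x$, let alone over a family of manifolds. This is harmless here precisely because the compounded exponent $\theta^{j}$ decays geometrically, so it annihilates \emph{any} non-uniform constant — and even the non-uniform quantity $\vol(X)$ — in the limit, leaving only the scale-invariant factor $c_1(m,A)$. I expect this to be the main (and really the only) obstacle; the rest is the routine cutoff computation. This is in essence Hebey's argument \cite{H00}, and an ODE reformulation in terms of $j\mapsto\log V(2^{-j})$ would work equally well.
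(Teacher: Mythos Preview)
The paper does not give its own proof of this lemma; it is simply quoted as a standard fact from Hebey \cite{H00}. Your argument is correct and is essentially the classical one: feeding a radial cutoff into the Sobolev inequality yields the reverse-doubling estimate $V(r/2)^{(m-2)/m}\le C r^{-2}V(r)$, and iterating over dyadic scales---with the geometric decay of the compounded exponent $\theta^{j}$ annihilating the non-uniform small-scale term---produces a positive lower bound for $V(1)$ depending only on $m$ and $A$.
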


Also recall that the Sobolev bound gives an upper bound for the norms of holomorphic sections.
\begin{lemma}
\label{lemma:bounds-on-sections}
For any $A<\infty$, there exists a constant $B=B(n,A)>0$ such that the following holds.
Let $(X,\omega,L,h)$ be an $n$-dimensional polarized \K manifold such that
$C_S(X,\omega)\leq A$.
% \begin{enumerate}
%      \item $C_S(X,\omega)\leq A$;
%      \item $\Ric(\omega)>-\omega$.
% \end{enumerate}
Then for any section $s\in H^0(X,L)$, we have
$$||s||_{L^\infty}^2\leq B||s||^2_{L^2}.$$
% $$|s|_{L^\infty}^2+|\nabla s|_{L^\infty}^2\leq B||s||^2_{L^2}.$$
\end{lemma}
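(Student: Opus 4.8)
The plan is a standard Moser (De Giorgi--Nash) iteration; the only genuinely K\"ahler input is the curvature normalization $R_h=\omega$, which forces the function $|s|^2_h$ to be an almost-plurisubharmonic subsolution of a Schr\"odinger-type operator.

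First I would derive a differential inequality for $f:=|s|^2_h$. In a local holomorphic frame $e$ of $L$ with $h(e,e)=e^{-\varphi}$, write $s=\sigma e$ with $\sigma$ holomorphic, so $f=|\sigma|^2e^{-\varphi}$. Away from the zero set $\{s=0\}$ one has $\idd\log|\sigma|^2=0$, hence
$$\idd\log f=-\idd\varphi=-R_h=-\omega,$$
and therefore $\Delta_\omega\log f=\tr_\omega(\idd\log f)=-n$ there, where $\Delta_\omega u:=\tr_\omega\idd u$. Consequently, away from $\{s=0\}$,
$$\Delta_\omega f=f\bigl(\Delta_\omega\log f+|\partial\log f|^2_\omega\bigr)\geq -nf.$$
Since $f$ is smooth, nonnegative, and $\{s=0\}$ has measure zero, this shows $f$ is a weak subsolution of $\Delta_\omega+n$ on all of $X$.

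Next I would run Moser iteration for this subsolution, using the Sobolev inequality $C_S(X,\omega)\leq A$ in the form recorded in the Remark after Theorem~\ref{theorem:main-result}. Testing the inequality $\Delta_\omega f+nf\geq0$ against powers $f^p$ and inserting the Sobolev inequality produces the usual reverse-H\"older chain, which iterates to a bound of the form
$$\sup_X f\leq C(n,A)\,\Bigl(\int_X f^2\,\omega^n\Bigr)^{1/2}.$$
The $L^2$ norm on the right is then upgraded to the $L^1$ norm by the interpolation trick $\int_X f^2\,\omega^n\leq(\sup_X f)\int_X f\,\omega^n$, which absorbs one factor of $\sup_X f$ and yields $\sup_X f\leq C(n,A)\int_X f\,\omega^n$. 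Recalling $\|s\|_{L^2}^2=\tfrac1{n!}\int_X f\,\omega^n$, this is exactly $\|s\|_{L^\infty}^2\leq B\|s\|_{L^2}^2$ with $B=n!\,C(n,A)$.

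The main point needing care is the Moser iteration itself: the Sobolev inequality used here is not scale invariant (it carries the zeroth-order term $\int_X u^2\,\omega^n$ on the right), so one must verify that the iteration still closes and that the resulting constant depends only on $n$ and $A$. This is routine --- the zeroth-order term and the coefficient $n$ in $\Delta_\omega f+nf\geq0$ only enter the constant, not the structure of the argument --- and it may be convenient to note in passing that $\vol(X,\omega)\geq(2\pi)^n$ since $\frac1{2\pi}\omega$ is a positive integral class, although with the stated form of the Sobolev inequality this lower bound is not strictly necessary. Alternatively one could simply quote the De Giorgi--Nash--Moser sup-estimate for subsolutions under a Sobolev inequality.
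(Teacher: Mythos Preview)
Your argument is correct and essentially identical to the paper's: the paper just records the Bochner identity $\Delta|s|^2_h=|\nabla s|^2_h-n|s|^2_h$ (which your log computation reproduces, since $f|\partial\log f|^2_\omega=|\nabla s|^2_h$) and then says ``standard Moser iteration''. Your write-up is simply more detailed than the one-line proof in the paper.
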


\begin{proof}
We have
$$\Delta|s|^2_h=|\nabla s|^2_h-n|s|^2_h.$$
So the result follows from the standard Moser iteration.
\end{proof}

We also recall the following standard estimate (cf. \cite{DS14}).
\begin{lemma}
\label{lemma:rho_kl>rho_k}
For any $A<\infty$, we have the following fact. Let $(X,\omega,L,h)$ be an $n$-dimensional polarized \K manifold such that
$C_S(X,\omega)\leq A$. Then for any $k,l\in\mathbb{N}_+$, we have
$$\rho_{\omega,kl}\geq\frac{(\rho_{\omega,k})^l}{l^nB^{l-1}},$$
where $B$ is the constant in Lemma \ref{lemma:bounds-on-sections}.
\end{lemma}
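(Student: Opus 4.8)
The plan is to exploit the definition of the Bergman kernel as a sup over normalized sections, combined with the multiplicative structure $H^0(X,L^k)^{\otimes l}\to H^0(X,L^{kl})$ given by taking products of sections. Fix a point $x\in X$. By definition, $\rho_{\omega,k}(x)=\sup\{|s|_{h^k}^2(x): s\in H^0(X,L^k),\ \|s\|_{L^2}=1\}$, and this sup is attained by some section, say $s_x$, which is the "peak section" at $x$. First I would take the $l$-th tensor power $s_x^{\otimes l}\in H^0(X,L^{kl})$, whose pointwise norm at $x$ is $|s_x|_{h^k}^{2l}(x)=(\rho_{\omega,k}(x))^l$. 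The only issue is to control its $L^2$ norm from above, so that after renormalizing we get a lower bound for $\rho_{\omega,kl}(x)$.

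Next I would estimate $\|s_x^{\otimes l}\|_{L^2}^2=\int_X|s_x|_{h^k}^{2l}\,\frac{(kl\omega)^n}{n!}$. Pulling out the sup of all but two factors is too lossy; the clean approach is to write $|s_x|_{h^k}^{2l}=|s_x|_{h^k}^{2(l-1)}\cdot|s_x|_{h^k}^{2}$ and bound $|s_x|_{h^k}^{2(l-1)}\leq (\|s_x\|_{L^\infty}^2)^{l-1}$ pointwise; then Lemma~\ref{lemma:bounds-on-sections} (applied to the polarized manifold $(X,k\omega,L^k,h^k)$, which still has Sobolev constant $\leq A$ by the rescaling invariance of $\mathcal M(n,A)$ noted in the introduction) gives $\|s_x\|_{L^\infty}^2\leq B\|s_x\|_{L^2}^2=B$. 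Hence
\[
\|s_x^{\otimes l}\|_{L^2}^2\leq B^{l-1}\int_X|s_x|_{h^k}^2\,\frac{(kl\omega)^n}{n!}=B^{l-1} l^n \int_X|s_x|_{h^k}^2\,\frac{(k\omega)^n}{n!}=B^{l-1}l^n,
\]
where the factor $l^n$ comes from $(kl\omega)^n=l^n(k\omega)^n$ and the last integral is $\|s_x\|_{L^2}^2=1$ with respect to the multiple-$k$ inner product. Therefore the normalized section $s_x^{\otimes l}/\|s_x^{\otimes l}\|_{L^2}$ has squared pointwise norm at $x$ equal to $(\rho_{\omega,k}(x))^l/\|s_x^{\otimes l}\|_{L^2}^2\geq (\rho_{\omega,k}(x))^l/(l^n B^{l-1})$, and since $\rho_{\omega,kl}(x)$ is the sup over all normalized sections, this yields $\rho_{\omega,kl}(x)\geq (\rho_{\omega,k}(x))^l/(l^n B^{l-1})$. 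As $x$ was arbitrary, the pointwise inequality of the lemma follows.

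The only real subtlety is bookkeeping of the scaling factors in the $L^2$ inner products at different multiples: one must be careful that the inner product on $H^0(X,L^{kl})$ is the one defined with volume form $(kl\omega)^n/n!$, whereas $s_x$ is normalized with respect to $(k\omega)^n/n!$, which is exactly where the $l^n$ enters. I would also double-check that Lemma~\ref{lemma:bounds-on-sections} is being invoked in the correctly rescaled form — it is stated for sections of $L$ on $(X,\omega,L,h)$, so I apply it to $\tilde L=L^k$, $\tilde\omega=k\omega$, $\tilde h=h^k$, using that $C_S(X,k\omega)\leq A$. No compactness or deep geometry is needed here; this is a soft algebraic argument, and the main "obstacle" is simply presenting the constant-chasing cleanly.
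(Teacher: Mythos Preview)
Your proposal is correct and follows essentially the same argument as the paper: pick a peak section $s$ for $\rho_{\omega,k}$ at $x$, take its $l$-th power, bound $\|s^l\|_{L^2}^2\leq l^n B^{l-1}$ via $|s|^{2(l-1)}\leq(\|s\|_{L^\infty}^2)^{l-1}\leq B^{l-1}$ and the volume-form rescaling $(kl\omega)^n=l^n(k\omega)^n$, then use the variational characterization of $\rho_{\omega,kl}$. Your added remark about invoking Lemma~\ref{lemma:bounds-on-sections} on the rescaled data $(X,k\omega,L^k,h^k)$ is a worthwhile clarification that the paper leaves implicit.
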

\begin{proof}
Consider an arbitrary point $x\in X$. We may assume that $\rho_{\omega,k}(x)\neq0$, then there exists $s\in H^0(X,L^k)$ with
$$||s||^2_{L^2}=\int_X|s|^2_{h^k}\frac{(k\omega)^n}{n!}=1$$
such that
$$\rho_{\omega,k}(x)=|s|^2_{h^k}(x).$$
Now we look at $s^l\in H^0(X,L^{kl})$. Using Lemma \ref{lemma:bounds-on-sections}, it is clear that
$$||s^l||^2_{L^2}=\int_X(|s|_{h^k}^2)^l\frac{(kl\omega)^n}{n!}\leq l^n(||s||_{L^\infty}^2)^{l-1}||s||^2_{L^2}\leq l^nB^{l-1}.$$
So we get
$$\rho_{\omega,kl}(x)\geq\frac{|s^l|_{h^{kl}}^2(x)}{||s^l||^2_{L^2}}\geq\frac{(\rho_{\omega,k}(x))^l}{l^nB^{l-1}}.$$
\end{proof}

\subsection{Proof of Theorem \ref{theorem:main-result}}
\label{section:pf-of-main-result}
\hfill
\hfill

To prove Theorem \ref{theorem:main-result}, we will make use of the following local partial $C^0$ estimate obtained in \cite{LS18}.

\begin{proposition}(\cite[Proposition 3.1]{LS18})
\label{proposition:Gabor-Liu}Given $A<\infty$, there is a large integer $K_0=K_0(n,A)$ and two constants $\epsilon=\epsilon(n,A)>0$, $c=c(n,A)>0$ with the following property. Let $(X,\omega,L,h)$ be an $n$-dimensional polarized \K manifold such that
\begin{enumerate}
     \item $C_S(X,\omega)\leq A$;
     \item $\Ric(\omega)>-\epsilon\omega$.
\end{enumerate}
Suppose that $d_{GH}(B(p,\epsilon^{-1}),B(o,\epsilon^{-1}))<\epsilon$
for a metric cone $(V,o)$. Then there exists an integer $m\leq K_0$, such that
$$\rho_{\omega,m}(p)\geq c>0.$$
\end{proposition}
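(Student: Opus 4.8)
The plan is to argue by contradiction, following the strategy of Donaldson--Sun \cite{DS14} as adapted to the Ricci-lower-bound setting by Liu--Sz\'ekelyhidi \cite{LS18}. Suppose the conclusion fails; applying it with $\epsilon=c=1/i$ and $K_0=i$, one obtains $n$-dimensional polarized \K manifolds $(X_i,\omega_i,L_i,h_i)$ with $C_S(X_i,\omega_i)\le A$ and $\Ric(\omega_i)>-\epsilon_i\omega_i$ for some $\epsilon_i\to0$, points $p_i\in X_i$, and metric cones $(V_i,o_i)$ with $d_{GH}\big(B(p_i,\epsilon_i^{-1}),B(o_i,\epsilon_i^{-1})\big)<\epsilon_i$, such that $\rho_{\omega_i,m}(p_i)\to0$ for every fixed $m\in\mathbb{N}_+$. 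The goal is to produce, for some fixed $m_0$ independent of $i$, a holomorphic section of $L_i^{m_0}$ with a definite peak at $p_i$ and bounded $L^2$-norm, which forces $\rho_{\omega_i,m_0}(p_i)$ to stay bounded below and gives the contradiction.

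First I would extract a limit. By Lemma \ref{lemma:kappa-noncollapsing} the metrics are uniformly volume non-collapsed, so Gromov compactness gives a subsequence with $(X_i,\omega_i,p_i)\to(Z,d_Z,z_0)$ in the pointed Gromov--Hausdorff topology. Since $\epsilon_i^{-1}\to\infty$ and each $B(p_i,\epsilon_i^{-1})$ is $\epsilon_i$-close to a ball in a metric cone, $(Z,z_0)$ is itself a metric cone with vertex $z_0$, realized as a non-collapsed Gromov--Hausdorff limit of polarized \K manifolds whose Ricci lower bound tends to zero. Invoking the structure theory for such limits (Cheeger--Colding, Cheeger--Colding--Tian, and Donaldson--Sun/Liu--Sz\'ekelyhidi in the polarized case): the regular set $\mathcal R\subset Z$ is open, dense and geodesically convex, carrying a smooth \K metric $\omega_\infty$ to which $\omega_i$ converges along with the complex structures; the singular set has real codimension at least two; $Z$ is a normal affine variety; and the Hermitian line bundles $(L_i,h_i)$ converge over $\mathcal R$ and extend across the singular set to a reflexive rank-one sheaf $L_\infty$ on $Z$. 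Here the uniform bound of Lemma \ref{lemma:bounds-on-sections} is exactly what lets holomorphic sections pass to the limit. The key quantitative point is that the ``complexity'' of $Z$ is controlled only through $n$ and the non-collapsing constant $\kappa(n,A)$; in particular the Cartier index of $L_\infty$ at $z_0$ is bounded by some $m_0\le K_0(n,A)$, so $L_\infty^{m_0}$ is locally free — hence holomorphically trivial — on a neighborhood of $z_0$, with a frame $e_\infty$ normalized so that $|e_\infty|_{h_\infty^{m_0}}(z_0)=1$.

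Next I would transplant and correct. Fix a cutoff $\chi$ on $Z$, supported near $z_0$ and equal to $1$ on a smaller neighborhood; then $\chi e_\infty$ is a smooth section of $L_\infty^{m_0}$ with $|\chi e_\infty|(z_0)=1$ and $\bar\partial(\chi e_\infty)=(\bar\partial\chi)e_\infty$ supported away from $z_0$. By the Donaldson--Sun/Liu--Sz\'ekelyhidi convergence of Hermitian line bundles, $\chi e_\infty$ induces, for $i$ large, smooth sections $\widetilde S_i$ of $L_i^{m_0}$ with $|\widetilde S_i|_{h_i^{m_0}}(p_i)\ge\tfrac12$, $\|\widetilde S_i\|_{L^2(X_i)}\le C(n,A)$, and $\bar\partial\widetilde S_i$ supported away from $p_i$ with $\|\bar\partial\widetilde S_i\|_{L^2}\le\eta_i\to0$. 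Since $m_0\omega_i+\Ric(\omega_i)>(m_0-\epsilon_i)\omega_i>0$ for $i$ large, H\"ormander's $L^2$-estimate on the compact manifold $X_i$ provides $u_i$ with $\bar\partial u_i=\bar\partial\widetilde S_i$, $\bar\partial^*u_i=0$ and $\|u_i\|_{L^2}\le C'\eta_i$. On a fixed small ball $B(p_i,r)$ the error $\bar\partial\widetilde S_i$ vanishes, so $|u_i|^2$ is a subsolution of $\Delta+n m_0$ there; Moser iteration, which needs only the uniform Sobolev constant and the Ricci lower bound, together with the non-collapsing of $\vol(B(p_i,r))$, yields $\sup_{B(p_i,r/2)}|u_i|^2\le C(n,A,r)\,\|u_i\|_{L^2}^2\to0$. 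Hence $S_i:=\widetilde S_i-u_i\in H^0(X_i,L_i^{m_0})$ satisfies $\|S_i\|_{L^2}\le 2C(n,A)$ and $|S_i|_{h_i^{m_0}}(p_i)\ge\tfrac12-o(1)\ge\tfrac14$, so
\[
\rho_{\omega_i,m_0}(p_i)\ \ge\ \frac{|S_i|_{h_i^{m_0}}^2(p_i)}{\|S_i\|_{L^2}^2}\ \ge\ \frac{1}{64\,C(n,A)^2}\ =:\ c>0
\]
for this fixed $m_0\le K_0(n,A)$, contradicting $\rho_{\omega_i,m_0}(p_i)\to0$.

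The \textbf{main obstacle} is the content imported in the second paragraph: that the Gromov--Hausdorff limit $Z$ of polarized \K manifolds with $C_S\le A$ and Ricci lower bound tending to zero is a normal affine variety with a well-behaved polarization, whose local invariants — in particular the Cartier index of $L_\infty$ at the vertex — are bounded solely in terms of $n$ and $A$, and that Hermitian line bundles and their holomorphic sections converge along such degenerations. This is precisely the technical heart of \cite{DS14,LS18}. By contrast, the Gromov compactness of the first step, the H\"ormander correction, and the Moser iteration that preserves the peak are all standard once the uniform Sobolev bound (through Lemmas \ref{lemma:kappa-noncollapsing} and \ref{lemma:bounds-on-sections}) and the Ricci lower bound are in hand.
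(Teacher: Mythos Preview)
The paper does not prove this proposition at all: it is quoted verbatim from \cite[Proposition 3.1]{LS18}, and the only content the paper adds is the one-line remark that the local non-collapsing hypothesis assumed in \cite{LS18} follows in the present setting from the Sobolev bound via Lemma~\ref{lemma:kappa-noncollapsing}. So there is no ``paper's own proof'' to compare against; the paper treats this as a black box and moves on to use it in Proposition~\ref{proposition:exists-l<k-such-that-rho>b}.

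Your proposal goes well beyond what the paper does, by sketching the actual contradiction argument from \cite{DS14,LS18}: extract a pointed Gromov--Hausdorff limit which must be a metric cone, invoke the structure theory (regular part K\"ahler, convergence of polarizations, bounded Cartier index at the vertex), transplant a local frame, correct via H\"ormander, and control the correction pointwise by Moser iteration. This outline is faithful to the peak-section method as implemented in those references, and your identification of the ``main obstacle'' is accurate: everything except the structure theory (normality of the limit, convergence of line bundles, uniform bound on the local index) is standard once the Sobolev and Ricci bounds are in hand. The one small inaccuracy is that in the Ricci-lower-bound-only setting of \cite{LS18} the relevant $\epsilon$-regularity and singular-set estimates are new input there (their Theorem 2.1), not just imported from Cheeger--Colding--Tian; you gesture at this but slightly understate it. In any case, since the paper itself simply cites the result, your write-up already exceeds what is required.
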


 The proof of this relies on a new $\epsilon$-regularity result \cite[Theorem 2.1]{LS18}, the \textit{peak section method} initiated in \cite{T} and the techniques in \cite{DS14,T13}. The basic philosophy is that, whenever the manifold is close to a metric cone, we can construct suitable holomorphic sections. Note that in \cite[Proposition 3.1]{LS18}, $(X,\omega)$ is assumed to be locally non-collapsed. This is guaranteed in our setting by Lemma \ref{lemma:kappa-noncollapsing}.

Now notice that, Proposition \ref{proposition:Gabor-Liu} gives the following compactness result.

\begin{proposition}
\label{proposition:exists-l<k-such-that-rho>b}
Given $A<\infty$, there is a large integer $K_1=K_1(n,A)$ and a constant $\eta=\eta(n,A)>0$ with the following property. Let $(X,\omega,L,h)$ be an $n$-dimensional polarized \K manifold such that
\begin{enumerate}
     \item $C_S(X,\omega)\leq A$;
     \item $\Ric(\omega)>-\omega$.
\end{enumerate}
Then for any point $p\in X$, there exists an integer $m\leq K_1$ such that
$$\rho_{\omega,m}(p)\geq\eta>0.$$
\end{proposition}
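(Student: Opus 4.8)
The plan is to argue by contradiction, feeding suitably rescaled manifolds into the local estimate of Proposition~\ref{proposition:Gabor-Liu}. Suppose the assertion fails for a given $A<\infty$: then for each $i\in\mathbb{N}_+$ there are a polarized \K manifold $(X_i,\omega_i,L_i,h_i)$ with $C_S(X_i,\omega_i)\le A$ and $\Ric(\omega_i)>-\omega_i$, and a point $p_i\in X_i$, such that $\rho_{\omega_i,m}(p_i)<1/i$ for every $1\le m\le i$. Writing $g_i$ for the underlying Riemannian metric, we have a uniform lower Ricci bound and, by Lemma~\ref{lemma:kappa-noncollapsing}, a uniform non-collapsing bound $\vol B_{g_i}(x,1)\ge\kappa(n,A)$. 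Since no diameter bound is available, I would work with pointed convergence: by Gromov's precompactness theorem, after passing to a subsequence, $(X_i,g_i,p_i)$ converges in the pointed Gromov--Hausdorff topology to a non-collapsed Ricci limit space $(Z,d_Z,p_\infty)$ in the sense of Cheeger--Colding. The point of passing to this limit first is that its infinitesimal structure sits at scales independent of $i$, whereas a single smooth $X_i$ only looks flat near $p_i$ at scales that could degenerate faster than $1/i$.

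The heart of the matter is to find, at a scale compatible with rescaling, a ball in $X_i$ near $p_i$ that is Gromov--Hausdorff close to a metric cone. By Cheeger--Colding, $Z$ has a tangent cone $(C,o)$ at $p_\infty$, which is a metric cone; fix one together with scales $\lambda_j\downarrow 0$ realizing it, so that $(B_{d_Z}(p_\infty,\lambda_j),\lambda_j^{-1}d_Z)\to (B_C(o,1),d_C)$. Let $K_0,\epsilon,c$ be the constants of Proposition~\ref{proposition:Gabor-Liu}. Choose $j$ so large that this approximation holds within a small $\tau=\tau(n,A)$ (to be fixed at the end), and pick an integer $\ell$ with $\epsilon^{-1}\ell^{-1/2}\in[\lambda_j/\sqrt2,\lambda_j]$ (possible once $\lambda_j$ is small, since then $\epsilon^{-2}\lambda_j^{-2}\gg 1$; note this forces $\ell\ge\epsilon^{-1}$), and set $s:=\epsilon^{-1}\ell^{-1/2}$. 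Because $C$ is a metric cone it is exactly self-similar, so restricting the $\tau$-approximation to the concentric sub-ball of radius $s$ and rescaling back to unit size shows that $(B_{d_Z}(p_\infty,s),s^{-1}d_Z)$ is $O(\tau)$-close to $(B_C(o,1),d_C)$; by the convergence $(X_i,g_i,p_i)\to(Z,d_Z,p_\infty)$ the same holds for $(B_{g_i}(p_i,s),s^{-1}d_{g_i})$ once $i$ is large. Scaling the metric up by $\epsilon^{-1}$ and using $\epsilon^{-1}s^{-1}=\sqrt\ell$, this is precisely the statement $d_{GH}\big(B_{\ell\omega_i}(p_i,\epsilon^{-1}),B_C(o,\epsilon^{-1})\big)=O(\tau\epsilon^{-1})$, which is $<\epsilon$ once $\tau$ is chosen small enough in terms of $n,A$.

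Now I would apply Proposition~\ref{proposition:Gabor-Liu} to the polarized \K manifold $(X_i,\ell\omega_i,L_i^{\ell},h_i^{\ell})$ at the point $p_i$: its Sobolev constant is still $\le A$ and $\Ric(\ell\omega_i)>-\ell^{-1}(\ell\omega_i)$ with $\ell^{-1}\le\epsilon$ (both as recorded in the Introduction), and the Gromov--Hausdorff hypothesis has just been verified with the metric cone $C$. Hence there is $m\le K_0$ with $\rho_{\ell\omega_i,m}(p_i)\ge c$, which by the rescaling property \eqref{equation:rho-under-scaling} reads $\rho_{\omega_i,\ell m}(p_i)\ge c$, with $\ell m\le \ell K_0$. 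Since $i\to\infty$, taking $i$ larger than $\ell K_0$, than $1/c$, and than the thresholds used above yields $c\le\rho_{\omega_i,\ell m}(p_i)<1/i<c$, a contradiction; this proves the proposition.

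The main obstacle is exactly the geometric input just used: that on every such manifold each point admits a metric ball, at a controlled scale, which is Gromov--Hausdorff close to a metric cone. This is where Bishop--Gromov volume comparison (via the Ricci lower bound), the non-collapsing coming from the Sobolev bound (Lemma~\ref{lemma:kappa-noncollapsing}), and the Cheeger--Colding almost-rigidity all enter. If one wants $K_1$ explicitly in terms of $n,A$ rather than the indirect bound above, the same argument can be made effective without passing to a limit: Bishop--Gromov makes the normalized volume ratio $r\mapsto\vol B_g(p,r)/V(r)$ monotone, Lemma~\ref{lemma:kappa-noncollapsing} pins it into a fixed interval for $r\le1$, hence it has bounded total variation, and a pigeonhole over the scales $\epsilon^{-1}\ell^{-1/2}$ with $\ell\le\ell_{\max}(n,A)$ finds one so pinched that the quantitative Cheeger--Colding ``volume cone implies metric cone'' furnishes the desired cone approximation; then $K_1=\ell_{\max}K_0$ and $\eta=c$. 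The remaining steps --- the two rescaling properties and the matching of the admissible factors $\ell^{1/2}$ to available cone scales --- are routine, the latter being painless because cones are exactly dilation-invariant.
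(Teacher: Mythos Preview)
Your argument is correct and follows essentially the same route as the paper: contradiction, pointed Gromov--Hausdorff limit $(Z,d,p_\infty)$, tangent cone at $p_\infty$, integer rescaling to feed into Proposition~\ref{proposition:Gabor-Liu}, and the rescaling identity \eqref{equation:rho-under-scaling} to contradict the hypothesis. The only cosmetic difference is that the paper takes the blow-up sequence $(Z,\sqrt{l_j}\,d,p_\infty)$ directly along integers $l_j\to\infty$ and extracts a cone limit in one step, whereas you first realize the tangent cone along real scales $\lambda_j\downarrow 0$ and then round to a nearby integer $\ell$ using cone self-similarity; your final paragraph on an effective pigeonhole version is extra and not in the paper.
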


\begin{proof}
We argue by contradiction. Suppose that the statement is wrong, then there exists $A<\infty$ such that, for any $K_i\rightarrow\infty$ and $\eta_i\rightarrow0$, there exists a polarized sequence $(X_i,\omega_i,L_i,h_i)$ satisfying
\begin{enumerate}
     \item $C_S(X_i,\omega_i)\leq A$,
     \item $\Ric(\omega_i)>-\omega_i$,
\end{enumerate}
and there exists $p_i\in X_i$ such that for any $m\leq K_i$, we have
\begin{equation}
    \label{equation:contrad-assumption}
    \rho_{\omega_i,m}(p_i)\leq \eta_i.
\end{equation}

Note that by Lemma \ref{lemma:kappa-noncollapsing}, the sequence $(X_i,\omega_i,p_i)$ satisfies local non-collapsing condition. So the standard Cheeger-Colding theory works perfectly.
After passing to a subsequence, we may assume
$$(X_i,\omega_i,p_i)\xrightarrow{\text{pointed } GH}(Z,d,p_\infty).$$
Note that $Z$ does not have to be a metric cone. But the blow-up will always be. So we take a sequence of integers $l_j\rightarrow\infty$ and by passing to a subsequence we can assume that
$$(Z,\sqrt{l_j}d,p_\infty)\xrightarrow{\text{pointed } GH} (V,o)$$
for some metric cone $(V,o)$.
Now we take $\epsilon=\epsilon(n,A)$ from Proposition \ref{proposition:Gabor-Liu}. Then for any $j$ sufficiently large, we have
$$d_{GH}\bigg(B_{d_j}(p_\infty,\epsilon^{-1}),B(o,\epsilon^{-1})\bigg)<\frac{\epsilon}{2}.$$
Here $B_{d_j}(p_\infty,\epsilon^{-1})$ denotes the ball centered at $p_\infty$ measured with respect to the rescaled metric $d_j=\sqrt{l_j}d$. We now \emph{fix} such an $j$. Then we clearly have
$$(X_i,l_j\omega_i,p_i)\xrightarrow{\text{pointed } GH}(Z,d_j,p_\infty).$$
Thus for any $i$ large enough, we have
$$d_{GH}\bigg(B_{l_j\omega_i}(p_i,\epsilon^{-1}),B_{d_j}(p_\infty,\epsilon^{-1})\bigg)<\frac{\epsilon}{2}.$$
Here $B_{l\omega}(p_i,\epsilon^{-1})$ denotes the ball centered at $p_i$ measured with respect to the rescaled \K form $l_j\omega_i$.
Thus we see that
$$d_{GH}\bigg(B_{l_j\omega_i}(p_i,\epsilon^{-1}),B(o,\epsilon^{-1})\bigg)<\epsilon$$
for any sufficiently large $i$. By increasing $j$ if necessary, we may further assume that $1/l_j<\epsilon$. Then Proposition \ref{proposition:Gabor-Liu} can be applied to the polarized manifold $(X_i,l_j\omega_i,L_i^{l_j},h_i^{l_j})$ for sufficiently large $i$. So we can find $m_i\leq K_0=K_0(n,A)$ such that
$$\rho_{l_j\omega_i,m_i}(p_i)\geq c=c(n,A)>0,$$
with $K_0$ and $c$ determined by Proposition \ref{proposition:Gabor-Liu}. Now thanks to the rescaling property \eqref{equation:rho-under-scaling}, we arrive at
$$\rho_{\omega_i,l_jm_i}(p_i)\geq c>0,$$
contradicting our assumption \eqref{equation:contrad-assumption} whenever $i$ is large enough.
\end{proof}

Now we can apply Lemma \ref{lemma:rho_kl>rho_k} to refine the statement of Proposition \ref{proposition:exists-l<k-such-that-rho>b}.

\begin{proposition}
\label{proposition:exists-large-D}
Given $A<\infty$, there is a large integer $D=D(n,A)$ and a constant $b=b(n,A)>0$ with the following property. Let $(X,\omega,L,h)$ be an $n$-dimensional polarized \K manifold such that
\begin{enumerate}
     \item $C_S(X,\omega)\leq A$;
     \item $\Ric(\omega)>-\omega$.
\end{enumerate}
Then we have
$$\rho_{\omega,D}(p)\geq b>0,\  \forall p\in X.$$
\end{proposition}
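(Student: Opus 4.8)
The plan is to combine Proposition \ref{proposition:exists-l<k-such-that-rho>b} with the multiplicativity estimate of Lemma \ref{lemma:rho_kl>rho_k}. The point is that Proposition \ref{proposition:exists-l<k-such-that-rho>b} produces a good multiple $m\leq K_1$ that depends on the point $p$, whereas we want a single multiple $D$ that works for every $p$ simultaneously. To remove the dependence on $p$, I would simply take $D$ to be a common multiple of all the integers in $\{1,2,\dots,K_1\}$ --- for instance $D:=\operatorname{lcm}(1,2,\dots,K_1)$ or $D:=K_1!$ --- so that $D$ depends only on $n$ and $A$ through $K_1=K_1(n,A)$.

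Now fix an arbitrary $p\in X$. By Proposition \ref{proposition:exists-l<k-such-that-rho>b} there is some $m=m(p)\leq K_1$ with $\rho_{\omega,m}(p)\geq\eta$, where $\eta=\eta(n,A)>0$. Since $m\mid D$, we may write $D=ml$ with $l:=D/m\in\mathbb{N}_+$ and $1\leq l\leq D$. Applying Lemma \ref{lemma:rho_kl>rho_k} with $k=m$ and this $l$ gives
$$\rho_{\omega,D}(p)=\rho_{\omega,ml}(p)\geq\frac{\rho_{\omega,m}(p)^l}{l^nB^{l-1}}\geq\frac{\eta^l}{l^nB^{l-1}},$$
where $B=B(n,A)$ is the constant from Lemma \ref{lemma:bounds-on-sections}.

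It then remains to bound the right-hand side below uniformly over the finitely many possible values of $l$. Since there are only finitely many divisors $l$ of $D$ and each quantity $\eta^l/(l^nB^{l-1})$ is strictly positive, one may simply set
$$b:=\min_{1\leq l\leq D}\frac{\eta^l}{l^nB^{l-1}}>0;$$
alternatively, after harmlessly assuming $\eta\leq1$ and $B\geq1$, this expression is nonincreasing in $l$, so one may take $b=\eta^D/(D^nB^{D-1})$. In either case $b$ depends only on $n$ and $A$, and $\rho_{\omega,D}(p)\geq b$ for all $p\in X$, which is the desired conclusion.

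As for difficulty, there is essentially no obstacle here: all of the analytic content has already been absorbed into Proposition \ref{proposition:exists-l<k-such-that-rho>b} (hence into \cite{LS18}) and into Lemma \ref{lemma:rho_kl>rho_k}. The only points requiring a little care are that the ``universal'' multiple $D$ must be chosen divisible by every integer up to $K_1$, so that Lemma \ref{lemma:rho_kl>rho_k} applies no matter which $m\leq K_1$ the previous proposition hands us, and that the final lower bound be taken as a minimum over the finitely many exponents $l=D/m$ that can occur, so as to remain independent of $p$.
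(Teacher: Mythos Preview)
Your proposal is correct and matches the paper's proof almost verbatim: the paper chooses $D=(K_1)!$, applies Lemma~\ref{lemma:rho_kl>rho_k} with $l=D/m_p$, and sets $b=\min\{1,\eta^D\}/(D^nB^{D-1})$, which is exactly your second alternative for $b$.
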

\begin{proof}
We choose $D=(K_1)!$, where $K_1=K_1(n,A)$ is the integer determined in the previous proposition. So for any $m\leq K_1$, $D$ is divisible by $m$. Now for any $p\in X$, Proposition \ref{proposition:exists-l<k-such-that-rho>b} guarantees that there exists $m_p\leq K_1$ and $\eta=\eta(n,A)>0$ such that
$$\rho_{\omega,m_p}(p)\geq \eta>0.$$
Now applying Lemma \ref{lemma:rho_kl>rho_k}, we get
$$\rho_{\omega,D}(p)\geq \frac{(\rho_{\omega,m_p}(p))^{D/m_p}}{(D/m_p)^nB^{D/m_p-1}}\geq\frac{\min\{1,\eta^D\}}{D^nB^{D-1}}>0.$$
So we choose $b=\frac{\min\{1,\eta^D\}}{D^nB^{D-1}}$ and finish the proof.
\end{proof}

Finally, we are able to prove Theorem \ref{theorem:main-result}.
\begin{proof}[Proof of Theorem \ref{theorem:main-result}]
Let $(X,\omega,L,h)$ be a polarized \K manifold that satisfies
\begin{enumerate}
     \item $C_S(X,\omega)\leq A$;
     \item $\Ric(\omega)>-\omega$.
\end{enumerate}
For any $k\in\mathbb{N}_+$, if we put
$$\Tilde{\omega}:=k\omega,\ \Tilde{L}:=L^k,\ \Tilde{h}:=h^k.$$
Then we would get
$$C_S(X,\tilde{\omega})\leq A\ \text{and } \Ric(\tilde{\omega})>-\frac{1}{k}\tilde{\omega}.$$
So the upper bound of $\rho_{\omega,k}$ for each $k\in\mathbb{N}_+$ follows directly if we apply Lemma \ref{lemma:bounds-on-sections} to $(X,\tilde{\omega},\tilde{L},\tilde{h})$. For the lower bound, note that Proposition \ref{proposition:exists-large-D} can be applied to the polarized pair $(X,\tilde{\omega},\tilde{L},\tilde{h})$. So we find $D=D(n,A)$ and $b=b(n,A)>0$ such that
$$\rho_{\tilde{\omega},D}(p)\geq b>0,\ \forall p\in X.$$
Finally, the rescaling property \eqref{equation:rho-under-scaling} gives
$$\rho_{\omega,Dk}\geq b>0,$$
as desired.
\end{proof}

\subsection{Partial $C^0$ estimate for K\"ahler-Ricci solitons}
\label{section:application}
\hfill

% The first application is a refinement of the partial $C^0$ estimate obtained in \cite{LS18}. This also verifies a stronger version of Tian's conjecture \cite{T90'}.
% \begin{theorem}
% \label{theorem:refine-PC0-in-GL}
% For any $A<\infty$, there is a large integer $D=D(n,A)$ and a constant $b=b(n,A)>0$ with the following property. Let $(X,\omega,L,h)$ be an $n$-dimensional polarized \K manifold such that
% \begin{enumerate}
%      \item $diam(X,\omega)\leq A$;
%      \item $\Ric(\omega)>-\omega$.
% \end{enumerate}
% Then we have
% $$\rho_{\omega,Dk}\geq b>0,\  \forall k\in\mathbb{N}_+.$$
% \end{theorem}
% \begin{proof}
% This follows immediately from Theorem \ref{theorem:main-result} since we have a uniform upper bound for the Sobolev constant  $C_S(X,\omega)$. Indeed, since $\omega\in 2\pi c_1(L)$ is in a integral cohomology class, we get
% $$\int_X\omega^n=(2\pi c_1(L))^n\geq (2\pi)^n.$$
% In other words, the volume of $(X,\omega)$ is uniformly bounded from below. Combining this with diameter upper bound and Ricci lower bound, we get a uniform upper bound for the Sobolev constant (cf. \cite{P12}).
% \end{proof}

We can also extend the argument in the previous subsection to K\"ahler-Ricci solitons.
More specifically, we have the following result, refining \cite[Theorem 1.1]{PSS15}.
\begin{theorem}
There exists $D=D(n)<\infty$, $b=b(n)>0$ and $B=B(n)$ with the following property. Let $X$ be an $n$-dimensional Fano manifold and suppose that 
$\omega\in2\pi c_1(X)$ satisfies
$$\Ric(\omega)=\omega+\mathcal{L}_\xi\omega$$
for some holomorphic vector field $\xi$ on $X$. Namely $(X,\omega,\xi)$ is a K\"ahler-Ricci soliton. Then we have
$$b\leq\rho_{\omega,Dk}\leq B,\ \forall k\in\mathbb{N}_+.$$
Here $\rho_{\omega,Dk}$ denotes the Bergman kernel of $(X,-K_X,\omega)$ with multiple $Dk$.
\end{theorem}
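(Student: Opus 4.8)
The plan is to reduce the statement to Theorem~\ref{theorem:main-result} by a single rescaling depending only on $n$. First I would show that, after replacing $\omega$ by $N\omega$ for a suitable integer $N=N(n)$, any K\"ahler--Ricci soliton $(X,\omega,\xi)$ with $\omega\in 2\pi c_1(X)$ lies in the class $\mathcal{M}(n,A)$ for some $A=A(n)$; the two bounds for $\rho_{\omega,Dk}$ then drop out of Theorem~\ref{theorem:main-result} applied to the rescaled polarized manifold, combined with the rescaling identity \eqref{equation:rho-under-scaling}.

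For the first step I would invoke the standard a priori estimates for K\"ahler--Ricci solitons. Writing the soliton equation as $\Ric(\omega)=\omega+\idd f$ with $f$ normalized in the usual way, there are uniform bounds --- depending only on $n$ --- for $\mathrm{Diam}(X,\omega)$, for the scalar curvature, and for $\|f\|_{C^2(X,\omega)}$; these follow from Perelman's estimates for the K\"ahler--Ricci flow applied to self-similar solutions and from the Bakry--\'Emery comparison geometry of $(X,\omega)$, whose Bakry--\'Emery Ricci tensor is $\omega>0$ by the soliton equation (cf. \cite{PSS15} and the references there). In particular there are $A=A(n)<\infty$ and $c=c(n)\geq 0$ with $C_S(X,\omega)\leq A$ and $\Ric(\omega)=\omega+\idd f\geq -c\,\omega$, the Ricci lower bound being immediate from the $C^2$ bound on $f$. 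Next I would fix an integer $N=N(n)$ with $N>\sqrt{c(n)}$, take $h$ to be the Hermitian metric on $-K_X$ with $R_h=\omega$, and set $\tilde\omega:=N\omega$, $\tilde L:=L^N=-NK_X$, $\tilde h:=h^N$. The Sobolev inequality is scale invariant, so $C_S(X,\tilde\omega)\leq A$; and the Ricci form is scale invariant, so $\Ric(\tilde\omega)=\Ric(\omega)\geq -c\,\omega=-\tfrac{c}{N^{2}}\,\tilde\omega>-\tilde\omega$. Hence $(X,\tilde\omega,\tilde L,\tilde h)\in\mathcal{M}(n,A)$.

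It then remains to feed this into Theorem~\ref{theorem:main-result}: since the constants there depend only on $n$ and $A=A(n)$, it gives $D'=D'(n)$, $b=b(n)>0$ and $B=B(n)<\infty$ with $b\leq\rho_{\tilde\omega,D'k}\leq B$ for every $k\in\mathbb{N}_+$. By \eqref{equation:rho-under-scaling}, $\rho_{\tilde\omega,D'k}=\rho_{N\omega,D'k}=\rho_{\omega,ND'k}$, which is precisely the Bergman kernel of $(X,-K_X,\omega)$ at multiple $ND'k$; so with $D:=ND'$ we conclude $b\leq\rho_{\omega,Dk}\leq B$ for all $k$, as required. The only genuinely non-formal input is the uniform soliton estimate of the second paragraph --- especially that the Sobolev constant and the Ricci lower bound can be taken to depend on $n$ alone --- and that is where I expect the real difficulty; should one wish to bypass the full $C^2$ estimate, an alternative is to re-run the contradiction argument of Proposition~\ref{proposition:exists-l<k-such-that-rho>b} along a sequence of rescaled solitons, using the Bakry--\'Emery version of Cheeger--Colding theory to extract tangent cones before applying the local estimate \cite[Prop.~3.1]{LS18} (our Proposition~\ref{proposition:Gabor-Liu}).
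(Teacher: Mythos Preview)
Your main route has a genuine gap: the ``$C^2$ bound on $f$'' you invoke is not among the known uniform soliton estimates. What Perelman's argument (and the references you cite) actually yields is
\[
|u|+|\nabla u|^2+|\Delta u|\leq C_1(n),
\]
together with a Sobolev bound $C_S(X,\omega)\leq A(n)$. This controls only the \emph{trace} of $\idd u$, not the full complex Hessian, so it does \emph{not} give $\Ric(\omega)=\omega-\idd u\geq -c(n)\,\omega$. Without that lower bound you cannot place $(X,N\omega,-NK_X)$ into $\mathcal{M}(n,A)$, and the direct reduction to Theorem~\ref{theorem:main-result} breaks down. (Incidentally, your scaling is off: with $\tilde\omega=N\omega$ one has $-c\,\omega=-\tfrac{c}{N}\tilde\omega$, not $-\tfrac{c}{N^2}\tilde\omega$; harmless, but worth fixing.)

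Your fallback --- Bakry--\'Emery Cheeger--Colding plus Proposition~\ref{proposition:Gabor-Liu} --- runs into the same wall: Proposition~\ref{proposition:Gabor-Liu} itself requires $\Ric(\omega)>-\epsilon\omega$ on the polarized K\"ahler manifold, and rescaling alone does not manufacture this from a Laplacian bound on $u$. The paper's way around this is Zhenlei Zhang's conformal trick: set $\eta:=e^{-u/(n-1)}\omega$; then $-C_2(n)\leq\Ric(\eta)\leq C_2(n)$, and these two-sided Ricci bounds \emph{are} preserved (indeed improved) under rescaling $\omega\mapsto k\omega$. This places the rescaled solitons inside the Cheeger--Colding--Tian framework of \cite{Z10,TZ12}, and the paper then invokes a soliton-specific local partial $C^0$ estimate (Proposition~\ref{proposition:PSS-local-PC0}, essentially from \cite{PSS15}) in place of Proposition~\ref{proposition:Gabor-Liu}. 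After that the proof proceeds exactly along the lines of Propositions~\ref{proposition:exists-l<k-such-that-rho>b}--\ref{proposition:exists-large-D}, as you anticipated. So the overall architecture of your proposal is right; the missing idea is the conformal metric $\eta$ (and the accompanying replacement of the Liu--Sz\'ekelyhidi local estimate by its soliton analogue), which is precisely what compensates for the absence of a genuine Ricci lower bound on $\omega$.
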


The techniques are more or less standard (following \cite{Z10,TZ12,PSS15}).
We outline a proof for reader's convenience.
\begin{proof}
Given a K\"ahler-Ricci soliton $(X,\omega,\xi)$, we can find a potential function $u\in C^\infty(X,\mathbb{R})$ such that
$$\Ric(\omega)=\omega-\idd u,\ \text{with}\ u_{ij}=u_{\bar{i}\bar{j}}=0.$$
As pointed out in \cite{PSS18}, we can assume that
$$|u|+|\nabla u|^2+|\Delta u|\leq C_1\ \text{for some }C_1=C_1(n).$$
And also, we have
$$C_S(X,\omega)\leq A\ \text{for some }A=A(n),$$
which takes care of the upper bound for $\rho_{\omega,k}$ by Lemma \ref{lemma:bounds-on-sections}. Now we derive the lower bound.
We recall Zhenlei Zhang's trick (see \cite{Z10}). Put
$$\eta:=e^{-\frac{u}{n-1}}\omega,$$
then we have
$$-C_2\leq\Ric(\eta)\leq C_2,\ \text{for some }C_2=C_2(n).$$
Note that, these estimates are the key ingredients to the partial $C^0$ estimate in \cite[Theorem 1.1]{PSS15}.
Now the important observation is that, these estimates are \textbf{preserved} if we rescale $\omega$ by some integers greater than 1. 

To be more precise, for any $k\in\mathbb{N}_+$, if we put
$$\tilde{\omega}:=k\omega,$$
then we would get
$$|u|+k|\nabla_{\tilde{\omega}}u|^2+k|\Delta_{\tilde{\omega}}u|\leq C_1\ \text{and}\ C_S(X,\tilde{\omega})\leq A.$$
Meanwhile, if we put
$$\tilde{\eta}:=e^{-\frac{u}{n-1}}\tilde{\omega},$$
then we have
$$-\frac{C_2}{k}\leq\Ric(\tilde{\eta})\leq \frac{C_2}{k}.$$
So as one can see, rescaling makes things better.

Now the proof can be carried out in the same manner as we did in Section \ref{section:pf-of-main-result}. The key result is Proposition \ref{proposition:PSS-local-PC0} below (compare Proposition \ref{proposition:Gabor-Liu}). With this in hand, we can then follow the argument of Proposition \ref{proposition:exists-l<k-such-that-rho>b} (using the Cheeger-Colding-Tian theory developed in \cite{Z10,TZ12})
to obtain a large integer $K=K(n)$ and $\eta=
\eta(n)>0$ such that, for any rescaled K\"ahler-Ricci soliton $(X,\tilde{\omega},\xi)$ and any point $p\in X$, there exists $m_p\leq K$ such that
$$\rho_{\tilde{\omega},m_p}(p)\geq \eta>0.$$
Then the same argument as in the proof of Proposition \ref{proposition:exists-large-D} gives a large integer $D=D(n)$ and $b=b(n)>0$ such that
$$\rho_{\tilde{\omega},D}(p)\geq b>0,\ \forall p\in X.$$
Finally the rescaling property \eqref{equation:rho-under-scaling} completes the proof.
\end{proof}

\begin{proposition}
\label{proposition:PSS-local-PC0}
There is a large integer $K_0=K_0(n)$ and two constants $\epsilon=\epsilon(n)>0$, $c=c(n)>0$ with the following property. Let $(X,\tilde{\omega})$ be an $n$-dimensional Fano manifold with $\tilde{\omega}\in 2\pi k c_1(X)$ for some $k\in \mathbb{N}_+$ such that $1/k<\epsilon$. Assume that there exists a potential function $u\in C^\infty(X,\mathbb{R})$ such that
$$\Ric(\tilde{\omega})=\frac{1}{k}\tilde{\omega}-\idd u,\ \text{with}\ u_{ij}=u_{\bar{i}\bar{j}}=0.$$
Namely $\tilde{\omega}$ is a rescaled K\"ahler-Ricci soliton metric (with sufficiently large scaling factor $k$).
Also assume that 
$$d_{GH}\big(B_{\tilde{\omega}}(p,\epsilon^{-1}),B(o,\epsilon^{-1})\big)<\epsilon$$
for a metric cone $(V,o)$. Then there exists an integer $m\leq K_0$, such that
$$\rho_{\tilde{\omega},m}(p)\geq c>0.$$
\end{proposition}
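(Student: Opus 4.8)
The plan is to mirror the proof of Proposition~\ref{proposition:Gabor-Liu} from \cite{LS18}, tracking how the soliton potential $u$ enters the estimates and checking that everything survives after rescaling by the large factor $k$. The key point is that the soliton equation $\Ric(\tilde\omega)=\tfrac1k\tilde\omega-\idd u$ can be rewritten, using Zhenlei Zhang's substitution $\tilde\eta:=e^{-u/(n-1)}\tilde\omega$, so that $\tilde\eta$ has Ricci curvature bounded in absolute value by $C_2/k$, which is $\leq\epsilon$ once $k$ is large. Since $\tilde\eta$ and $\tilde\omega$ are uniformly equivalent (because $|u|\leq C_1$), a Gromov--Hausdorff closeness of $B_{\tilde\omega}(p,\epsilon^{-1})$ to a metric cone translates, up to shrinking $\epsilon$ and adjusting radii, into GH closeness of a $\tilde\eta$-ball to a (possibly different, but still) metric cone; and the $\tilde\eta$-metric satisfies the two-sided Ricci bound that is the hypothesis of the Donaldson--Sun/Liu--Sz\'ekelyhidi $\epsilon$-regularity machinery. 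The volume non-collapsing needed as an input is supplied by Lemma~\ref{lemma:kappa-noncollapsing}, since $C_S(X,\tilde\omega)\leq A$ and hence $C_S(X,\tilde\eta)\leq A'(n)$.

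First I would set up the metric comparison carefully: record that $e^{-C_1/(n-1)}\tilde\omega\leq\tilde\eta\leq e^{C_1/(n-1)}\tilde\omega$, so distances and balls with respect to the two metrics are comparable with a dimensional constant, and deduce the volume non-collapsing and Sobolev bound for $\tilde\eta$. Next I would invoke the $\epsilon$-regularity theorem \cite[Theorem 2.1]{LS18} (or its soliton analogue as in \cite{PSS15,TZ12}) for $\tilde\eta$: the two-sided curvature bound $C_2/k$ together with cone-closeness gives that on a definite scale around $p$ the manifold is biholomorphic, via an almost-isometry, to a ball in a K\"ahler cone, with good control of the reference metric. Then I would run the peak section construction of \cite{T,DS14,T13}: produce an almost-holomorphic section of $\tilde L^m=(-K_X)^{\otimes m k}$ peaked at $p$ for some bounded $m\leq K_0(n)$, using a cutoff and solving a $\bar\partial$-equation with $L^2$ estimates governed by the curvature of $h^{\otimes mk}$; the positivity needed for H\"ormander's estimate comes from the Ricci lower bound together with the $mk$-fold twist of the polarization. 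The $L^2$ error is small because of the cone-closeness, so the section can be corrected to a genuine holomorphic section with $|s|^2_{h^{mk}}(p)$ bounded below by $c(n)>0$ after normalizing $\|s\|_{L^2}=1$, which is exactly $\rho_{\tilde\omega,m}(p)\geq c$.

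The one subtlety I would flag is that the peak-section estimates are naturally carried out with respect to $\tilde\omega$ (the metric in the class $2\pi k c_1(X)$ in which $h^{\otimes mk}$ has curvature $\tilde\omega$), whereas the Cheeger--Colding input is most cleanly stated for $\tilde\eta$; so one must shuttle between the two metrics, using $|u|+|\nabla_{\tilde\omega}u|^2\leq C_1$ (note the gradient term actually improves to $C_1/k$ after rescaling, which only helps) to see that the weight $e^{-u}$ appearing when one changes volume forms is bounded above and below by dimensional constants and has controlled derivatives. This is where \cite{PSS15,PSS18} already did the analogous bookkeeping in the unrescaled case, and the content of the present refinement is simply that none of those constants degenerate under $k\mapsto\infty$ — indeed they improve.

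The main obstacle, as in the Liu--Sz\'ekelyhidi argument, is the $\epsilon$-regularity step: one needs that a definite Gromov--Hausdorff closeness to a metric cone, under the two-sided Ricci bound $C_2/k\le\epsilon$, forces a \emph{holomorphic} chart with metric estimates strong enough to run H\"ormander. This is precisely \cite[Theorem 2.1]{LS18} (and its soliton predecessors), so I would quote it rather than reprove it; the remaining work — the metric comparison via $\tilde\eta$, the non-collapsing, the peak section and $\bar\partial$-correction, and the passage $m k\leadsto$ bounded multiple $m$ of the rescaled polarization — is routine given the earlier parts of this paper and the cited references. Having established Proposition~\ref{proposition:PSS-local-PC0}, the global statement then follows verbatim from the scheme of Section~\ref{section:pf-of-main-result}: a Cheeger--Colding--Tian compactness/contradiction argument (as in \cite{Z10,TZ12}) upgrades it to a uniform bound $\rho_{\tilde\omega,m_p}(p)\geq\eta(n)$ with $m_p\leq K(n)$, Lemma~\ref{lemma:rho_kl>rho_k} with $D=(K(n))!$ turns this into $\rho_{\tilde\omega,D}\geq b(n)>0$ everywhere, Lemma~\ref{lemma:bounds-on-sections} gives the matching upper bound $B(n)$, and the rescaling identity \eqref{equation:rho-under-scaling} yields $b\leq\rho_{\omega,Dk}\leq B$ for all $k\in\mathbb{N}_+$.
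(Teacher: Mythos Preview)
Your plan is sound in spirit and identifies the right ingredients, but it follows a somewhat different route from the paper's own proof. The paper argues purely by contradiction: assume the statement fails, extract a sequence of rescaled solitons $(X_i,\tilde\omega_i,p_i)$ converging in pointed Gromov--Hausdorff sense to the cone $(V,o)$, invoke the structure theory of \cite{TZ12} (which already absorbs the $\tilde\eta$-comparison and shows the limit is Ricci-flat off a codimension-$4$ singular set, with $C^\infty$ convergence on the regular part), and then apply the Donaldson--Sun/Tian peak section argument on the sequence to produce the desired sections, a contradiction. This is short because all of the metric comparison bookkeeping is hidden inside the cited results.

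Your direct route --- transfer cone-closeness to $\tilde\eta$, invoke a quantitative $\epsilon$-regularity, and run H\"ormander for $\tilde\omega$ --- is also viable, but two points need care. First, the claim that GH-closeness to a cone passes from $\tilde\omega$-balls to $\tilde\eta$-balls is \emph{not} a consequence of bi-Lipschitz equivalence alone; what makes it true here is that $|\nabla_{\tilde\omega}u|^2\leq C_1/k$, so $u$ is nearly constant on the ball of radius $\epsilon^{-1}$ and $\tilde\eta$ is close to a \emph{fixed scalar multiple} of $\tilde\omega$ (and cones are scale-invariant). You do note the gradient improvement later, but it is needed here, not just in the volume-form comparison. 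Second, you cannot literally apply \cite[Theorem~2.1]{LS18} to $\tilde\eta$, since that result requires a \emph{K\"ahler} metric with a Ricci lower bound and $\tilde\eta$ is not K\"ahler, while $\tilde\omega$ itself has no pointwise Ricci lower bound. The ``soliton analogue'' you invoke is precisely what \cite{TZ12,PSS15} establish, and they do so via the very compactness/contradiction argument the paper uses. So in the end both routes rest on the same machinery; yours unpacks more of the mechanism, while the paper simply defers to \cite{PSS15} and observes that the relevant estimates only improve under rescaling.
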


\begin{proof}
This is essentially contained in \cite[Section 5]{PSS15}. We argue by contradiction. Suppose that for $k_i\rightarrow\infty$, we have a sequence of blowing-up K\"ahler-Ricci solitons $(X_i,\tilde{\omega_i},\xi_i)$ with
$$\Ric(\tilde{\omega_i})=\frac{1}{k_i}\tilde{\omega_i}-\idd u_i$$
and
$$(X,\tilde{\omega_i},p_i)\xrightarrow{pointed\ GH}(V,o)$$
for some metric cone $(V,o)$. Then by \cite{TZ12}, we know that $V$ is Ricci flat away from a closed singular set with codimension at least 4 and the convergence takes place in $C^\infty$ topology on the regular part. Then the argument in \cite{DS14,T13} can be applied in this setting (see also \cite{TZ16}) to deduce that, there exists $K<\infty$ and $c>0$ such that, for any sufficiently large $i$, there exists some $m_i\leq K$ such that $\rho_{\tilde{\omega_i},m_i}(p_i)\geq c>0.$
\end{proof}

\section{Partial $C^0$ estimate along the generalized K\"ahler Ricci flow}
\label{sec:KRF}

In this section, we will focus on the \textit{generalized} K\"ahler-Ricci flow (KRF) on Fano manifolds.
% mainly due to the reason that it can be used to study the existence of generalized K\"ahler-Einstein metrics (see e.g. \cite{ZZ14,L13,CS16}). Indeed, as shown in \cite{CS16}, generalized KRF will always converge to a generalized KE metric if there is one, generalizing \cite{TianZhu07}. 
% On the other hand, the existence of generalized KE is related to certain stability condition in algebraic geometry (this is known as the Yau-Tian-Dondalson correspondence, see e.g. \cite{T97,D02,D16}). 
% An important object in this field is the Bergman kernel.
% Which plays a fundamental role in Tian's program to study the Yau-Tian-Donaldson correspondence. 
The purpose of this section is to prove Theorem \ref{theorem:main-tesult} and Theorem \ref{theorem:PC0-for-scalar-lower-bound}. 
% Now for any $k\in\mathbb{N}$, using $\omega_t$ and $h_t$, we can define a Hermitian inner product $\langle\cdot\ ,\cdot\rangle_t$ on the vector space $H^0(X,kL)$ by letting
% $$\langle s_1,s_2 \rangle_t=\int_X(s_1,s_2)_{h^k_t}\frac{\omega_t^n}{n!},\ \forall s_1,s_2\in H^0(X,kL).$$
% Using this Hermitian inner product, we can pick an orthonormal basis $s_0,s_1,...,s_{N_k}$ of $H^0(X,kL)$ and define
% \begin{equation}
%     \label{equation:Bergman-Kernel}
%     \rho_{\omega_t,k}(x):=\sum_{i=0}^{N_k}|s_i|^2_{h^k_t}(x),\ x\in X.
% \end{equation}
% Note that $\rho_{\omega_t,k}$ is called the Bergman kernel of $(X,kL,\omega_t)$, which is independent of the choices of $h_t$ and the orthonormal basis.
\subsection{Preliminaries on the generalized KRF}
\label{section:preliminaries}
\hfill\\
In this part, we recall some standard results of the generalized KRF. These results were well-established for KRF and were later extended to the generalized setting in \cite{L13,CS16}.

Let us recall the setup.
Let $X$ be an $n$-dimensional compact K\"ahler manifold and $L$ an ample line bundle on $X$. Assume that there is a non-negative closed $(1,1)$ form $\alpha\in2\pi(c_1(X)-c_1(L))$. Note that in this setting, $X$ is automatically Fano.
We fix an initial metric $\omega_0\in 2\pi c_1(L)$ and consider the generalized KRF \eqref{equation:generalized KRF}:
\begin{equation*}
    \frac{\partial}{\partial t}\omega_t=-\Ric(\omega_t)+\omega_t+\alpha
\end{equation*}
starting from $\omega_0$.

This flow preserves the cohomology class of $\omega_t$ and exists for all time. Along the flow, we choose a family of smooth Hermitian metrics $h_t$ on the line bundle $L$ such that the curvature form of $h_t$ satisfies
\begin{equation}
    \label{equation:}
    -\idd\log h_t=\omega_t.
\end{equation}

For any K\"ahler form $\omega\in[\omega_0]$, let $f_{\omega}\in C^\infty(X,\mathbb{R})$ be the generalized Ricci potential of $\omega$, which is uniquely determined by
\begin{equation}
    \label{equation:generalized-Ricci-potential}
    \Ric(\omega)=\omega+\alpha+\idd f_{\omega},\ \int_Xe^{f_{\omega}}\omega^n=V,
\end{equation}
where $V=(2\pi c_1(L))^n$ is the volume of the K\"ahler class.

The following result is essentially due to Perelman.
\begin{theorem}[\cite{SesumTian,L13,CS16}]
\label{theorem:Perelman-estimate-along-generalized KRF}
Along the flow \eqref{equation:generalized KRF}, there exists a uniform constant $C$ such that
$$|f_{\omega_t}|+|\nabla f_{\omega_t}|+|\Delta f_{\omega_t}|+diam(X,\omega_t)\leq C.$$
Here the gradient, Laplacian and the norms are all taken with respect to the evolving metric $\omega_t$. The constant $C$ only depends on the dimension $n$, the volum $V$, the $L^2$-Sobolev constant of $(X,\omega_0)$, $|\nabla f_{\omega_0}|$ and $|\Delta f_{\omega_0}|$.
\end{theorem}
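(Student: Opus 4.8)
The plan is to run Perelman's a priori estimates for the Fano K\"ahler--Ricci flow---written up in detail by Sesum--Tian \cite{SesumTian}---in the twisted setting, following Liu \cite{L13} and Collins--Sz\'ekelyhidi \cite{CS16}. The organizing principle is that $\Ric(\omega_t)-\alpha$ should play the role of the Ricci tensor: from \eqref{equation:generalized-Ricci-potential}, the flow \eqref{equation:generalized KRF} reads $\partial_t\omega_t=-(\Ric(\omega_t)-\alpha)+\omega_t=-\idd f_{\omega_t}$, so up to the twist it is a normalized K\"ahler--Ricci flow whose Ricci potential is exactly $f_{\omega_t}$; the hypothesis $\alpha\geq0$ is what will make the relevant Bochner and Perelman computations retain the correct sign. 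I would proceed in four stages: (1) a twisted entropy monotonicity; (2) $\kappa$-non-collapsing and a uniform Sobolev inequality; (3) Perelman's pointwise estimates on $f_{\omega_t}$, $|\nabla f_{\omega_t}|$ and $\Delta f_{\omega_t}$; (4) the diameter bound.

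For Stages 1--2 I would use the twisted Perelman entropy $\mu_\alpha(\omega,\tau)=\inf_f\mathcal W_\alpha(\omega,f,\tau)$, where $\mathcal W_\alpha$ is Perelman's $\mathcal W$-functional with the scalar curvature $R_\omega$ replaced by the twisted scalar curvature $R_\omega-\tr_\omega\alpha$ and the infimum is over $f$ with $(4\pi\tau)^{-n}\int_X e^{-f}\omega^n=1$. The standard time-reparametrization turns \eqref{equation:generalized KRF} into a (twisted) Ricci flow on a bounded time interval, and one checks, as in \cite{L13,CS16}, that along it $\mu_\alpha$ (with its $\tau$-variable decreasing) is non-decreasing---the extra term produced by $\alpha$ having a favorable sign since $\alpha\geq0$. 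Hence $\mu_\alpha$ stays bounded below by its value at $t=0$, which is in turn bounded below in terms of $n$, $V$ and the $L^2$-Sobolev constant of $(X,\omega_0)$ (a Sobolev inequality bounds Perelman's entropy from below). Perelman's no-local-collapsing argument then upgrades this uniform lower bound on $\mu_\alpha$ to a constant $\kappa=\kappa(n,V,C_S(\omega_0))>0$ with $\vol_{\omega_t}(B(x,r))\geq\kappa r^{2n}$ whenever $r\leq1$ and $R_{\omega_t}\leq r^{-2}$ on $B(x,r)$, and to a uniform $L^2$-Sobolev inequality for each $(X,\omega_t)$.

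Stage 3 is the heart of the matter. From $\Delta f_{\omega_t}=R_{\omega_t}-n-\tr_{\omega_t}\alpha$ and the evolution equation of the twisted scalar curvature, the minimum principle---with $\alpha\geq0$ controlling the reaction term and the initial value bounded by $|\Delta f_{\omega_0}|$---should give a uniform lower bound $\Delta f_{\omega_t}\geq-C$. Next, differentiating \eqref{equation:generalized KRF} shows that $f_{\omega_t}$ solves a heat-type equation $\partial_t f_{\omega_t}=\Delta f_{\omega_t}+f_{\omega_t}+c(t)$, with $c(t)$ fixed by the normalization $\int_X e^{f_{\omega_t}}\omega_t^n=V$; this is precisely the equation making a renormalization $u_t$ of $e^{-f_{\omega_t}}$ a solution of the conjugate heat equation of the reparametrized flow, so I would invoke Perelman's pointwise monotonicity for the Harnack density $v_t=\big[\tau\big(2\Delta f_{\omega_t}-|\nabla f_{\omega_t}|^2+R_{\omega_t}-\tr_{\omega_t}\alpha\big)+f_{\omega_t}-2n\big]u_t$---whose evolution acquires only $\alpha\geq0$ terms of the right sign---together with the normalization and the uniform Sobolev inequality from Stage 2, to get $|f_{\omega_t}|\leq C$. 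With the $C^0$ bound on $f_{\omega_t}$ in hand, the remaining bounds $|\nabla f_{\omega_t}|\leq C$ and $\Delta f_{\omega_t}\leq C$ would follow from standard parabolic maximum-principle arguments applied to suitable combinations of $|\nabla f_{\omega_t}|^2$, $\Delta f_{\omega_t}$ and $f_{\omega_t}$ built from the Bochner formula and the evolution equations (again with $\alpha$-terms of favorable sign), using the $C^0$ bound on $f$ and the initial bounds $|\nabla f_{\omega_0}|$, $|\Delta f_{\omega_0}|$. These are the intricate but by now routine computations of \cite{SesumTian,L13,CS16}.

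Stage 4, the diameter bound, is then short. Once $|R_{\omega_t}|\leq C$, and since $\int_X\omega_t^n=V$ is fixed (the flow preserves $[\omega_t]=2\pi c_1(L)$), I would fix $r_0=\min(1,C^{-1/2})$ so that Stage 2 gives $\vol_{\omega_t}(B(x,r_0))\geq\kappa r_0^{2n}=:v_0>0$ for every $x\in X$; a maximal $r_0$-separated subset of $X$ then has at most $V/v_0$ points, the corresponding balls of radius $2r_0$ cover the connected manifold $X$, and a standard nerve argument forces $\operatorname{diam}(X,\omega_t)\leq4r_0V/v_0$. Tracking the constants through shows that all of them depend only on $n$, $V$, $C_S(\omega_0)$, $|\nabla f_{\omega_0}|$ and $|\Delta f_{\omega_0}|$, as claimed. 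The step I expect to be the main obstacle is verifying that Perelman's differential-Harnack and entropy computations still close when $\Ric(\omega)$ is replaced throughout by the non-tensorial $\Ric(\omega)-\alpha$---i.e.\ that every $\alpha$-dependent error term can be absorbed thanks to $\alpha\geq0$; a secondary difficulty is that no uniform Ricci \emph{lower} bound is available along the flow, which is exactly why the diameter in Stage 4 must be squeezed out of the scalar-curvature bound and volume non-collapsing rather than out of Bishop--Gromov.
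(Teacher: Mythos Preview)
The paper does not supply its own proof of this theorem: it is quoted as a known result, attributed to Perelman and its twisted extensions \cite{SesumTian,L13,CS16}, and used as a black box. So there is no ``paper's proof'' to compare against; what you have written is, in effect, a condensed reading guide to those references, and as such it is broadly accurate in identifying the four pillars (twisted $\mu$-entropy monotonicity, $\kappa$-noncollapsing/Sobolev, Perelman's pointwise estimates on the Ricci potential, diameter bound) and in isolating $\alpha\ge 0$ as the reason the extra terms in every differential inequality have the right sign.

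One point of the outline is slightly mis-ordered relative to how the argument actually closes in \cite{SesumTian,L13,CS16}. In your Stage~3 you obtain the full $C^0$ bound $|f_{\omega_t}|\le C$ \emph{before} the diameter bound, and then in Stage~4 you use the resulting two-sided scalar curvature bound to run a ball-counting argument. In the actual proofs the logic is intertwined the other way: one first proves, by maximum-principle arguments on auxiliary quantities, estimates of the form $|\nabla f_{\omega_t}|^2\le C(C-f_{\omega_t})$ and $\Delta f_{\omega_t}\le C(C-f_{\omega_t})$, which do \emph{not} yet require $|f_{\omega_t}|$ bounded; these gradient-type bounds, together with $\kappa$-noncollapsing and the fixed total volume, are what force the diameter bound, and only \emph{then} does one read off the uniform lower bound on $f_{\omega_t}$ (hence $|f_{\omega_t}|\le C$) and the remaining absolute bounds on $|\nabla f_{\omega_t}|$ and $\Delta f_{\omega_t}$. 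Your Stage~4 as written presupposes a two-sided bound on the (twisted) scalar curvature that, in the standard route, is only available after the diameter is controlled. This is a reordering issue rather than a genuine gap: the ingredients you list are the right ones, and the references you cite contain the correct sequencing.
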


We also have a uniform Sobolev inequality along the generalized KRF.
\begin{theorem}[\cite{Z07,Y15,L13,CS16}]
\label{theorem:Sobolev-inequality}
Along the flow \eqref{equation:generalized KRF}, there exists a uniform constant $C_S$ such that for any $u\in W^{1,2}(X)$, we have
$$\bigg(\int_X |u|^{\frac{2n}{n-1}}\omega_t^n\bigg)^{\frac{n-1}{n}}\leq C_S\bigg(\int_Xu^2\omega_t^n+\int_X|\nabla u|^2\omega_t^n\bigg).$$
Here $C_S$ only depends on the dimension $n$, the volum $V$, the $L^2$-Sobolev constant of $(X,\omega_0)$, $|\nabla f_{\omega_0}|$ and $|\Delta f_{\omega_0}|$.
\end{theorem}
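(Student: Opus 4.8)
The plan is to run the Perelman-entropy proof of uniform Sobolev inequalities along Ricci-type flows (as in \cite{Z07,Y15}), with the twist $\alpha$ incorporated following \cite{L13,CS16}. The observation that makes this possible is that, although $\alpha$ need not be bounded relative to the evolving metric $\omega_t$, the \emph{twisted scalar curvature} is: tracing \eqref{equation:generalized-Ricci-potential} gives
$$R(\omega_t)-\tr_{\omega_t}\alpha=n+\Delta f_{\omega_t},$$
so Theorem \ref{theorem:Perelman-estimate-along-generalized KRF} makes $R(\omega_t)-\tr_{\omega_t}\alpha$ uniformly bounded on both sides along the flow; equivalently, writing $g_t$ for the real $2n$-dimensional metric underlying $\omega_t$ and $\mathbf{R}_{g_t}$ for its scalar curvature, $\mathbf{R}_{g_t}-2\tr_{\omega_t}\alpha$ is uniformly bounded. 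One therefore works with a twisted Perelman entropy in which the scalar-curvature term carries this correction.

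Concretely, I would set, for $\tau>0$,
$$\mathcal{W}_\alpha(g_t,\phi,\tau)=\int_X\Big[\tau\big(|\nabla\phi|^2+\mathbf{R}_{g_t}-2\tr_{\omega_t}\alpha\big)+\phi-2n\Big](4\pi\tau)^{-n}e^{-\phi}\,\frac{\omega_t^n}{n!},$$
$$\mu_\alpha(g_t,\tau)=\inf\Big\{\mathcal{W}_\alpha(g_t,\phi,\tau):\int_X(4\pi\tau)^{-n}e^{-\phi}\,\frac{\omega_t^n}{n!}=1\Big\}.$$
The core is a monotonicity formula: running Perelman's computation for the flow \eqref{equation:generalized KRF} --- a shrinking-type normalized flow keeping $[\omega_t]=2\pi c_1(L)$ fixed and existing for all $t\geq0$ --- the $t$-derivative of $\mathcal{W}_\alpha$ along a conjugate-heat solution $\phi$ acquires, besides Perelman's nonnegative square term, extra contributions from the twist and the normalization; the key point, which is where \cite{L13,CS16} do the work, is that $\alpha\geq0$ is exactly what makes the signs of these contributions favourable, so that $\mu_\alpha(g_t,\tau)$ is monotone up to the explicit shrinking normalization. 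Combining this with the usual comparison of $\mu_\alpha$ at different times and scales and with Perelman's finiteness of the $\nu$-invariant of the \emph{fixed} closed metric $g_0$, one gets a uniform lower bound
$$\mu_\alpha(g_t,\tau)\geq-\Lambda_0,\qquad\forall\,t\geq0,\ \tau\in(0,1],$$
with $\Lambda_0$ depending only on $n$, $V$ and the initial data listed in the statement (the $L^2$-Sobolev constant of $\omega_0$ entering through the classical equivalence of Sobolev and logarithmic Sobolev inequalities for the fixed metric $\omega_0$, and $|\nabla f_{\omega_0}|$, $|\Delta f_{\omega_0}|$ through Theorem \ref{theorem:Perelman-estimate-along-generalized KRF}).

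The remaining steps are routine. Unwinding $\mathcal{W}_\alpha(g_t,\phi,\tau)\geq-\Lambda_0$ in the standard way (writing $w^2=(4\pi\tau)^{-n}e^{-\phi}$, so that $\|w\|_{L^2(\omega_t^n)}=1$) gives, for every $\tau\in(0,1]$ and every such $w$,
$$\int_Xw^2\log w^2\,\omega_t^n\leq 4\tau\int_X|\nabla w|^2\,\omega_t^n+\tau\int_X\big(\mathbf{R}_{g_t}-2\tr_{\omega_t}\alpha\big)w^2\,\omega_t^n+n\log\tfrac1\tau+C_1;$$
since $\mathbf{R}_{g_t}-2\tr_{\omega_t}\alpha$ is bounded above by the first paragraph, the middle term is $\leq C\tau$, and one arrives at a uniform one-parameter family of logarithmic Sobolev inequalities
$$\int_Xw^2\log w^2\,\omega_t^n\leq 4\tau\int_X|\nabla w|^2\,\omega_t^n+n\log\tfrac1\tau+C_2,\qquad\tau\in(0,1],\ \|w\|_{L^2(\omega_t^n)}=1.$$
Such a family, on a closed manifold of real dimension $2n$ and fixed volume $V$, is equivalent to an $L^2$-Sobolev inequality of dimension $2n$, i.e. with exponent $\tfrac{2\cdot2n}{2n-2}=\tfrac{2n}{n-1}$ (this classical equivalence, going back to Davies, is used e.g. in \cite{Z07,Y15}), which is precisely
$$\Big(\int_X|u|^{\frac{2n}{n-1}}\omega_t^n\Big)^{\frac{n-1}{n}}\leq C_S\Big(\int_Xu^2\omega_t^n+\int_X|\nabla u|^2\omega_t^n\Big),$$
with $C_S$ depending only on $C_2$, $n$ and $V$, hence only on the stated data.

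The step I expect to be the main obstacle is the twisted entropy monotonicity: one must redo Perelman's differential-Harnack-type computation for \eqref{equation:generalized KRF} with the corrected integrand $\mathbf{R}_{g_t}-2\tr_{\omega_t}\alpha$, carefully tracking the cross terms produced both by $\alpha$ and by the shrinking normalization, and verify that semipositivity of $\alpha$ is exactly the hypothesis under which their signs work out --- this is the substance of \cite{L13,CS16}. A secondary point needing care is uniformity as $t\to\infty$, which the generalized KRF permits: it does not follow from a single step of monotonicity, and is why one invokes the finiteness of the $\nu$-invariant of $(X,\omega_0)$ rather than merely a lower bound on $\mu_\alpha(g_0,\tau)$ at one fixed scale.
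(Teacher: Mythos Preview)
The paper does not prove this theorem: it is stated as a known result with citations to \cite{Z07,Y15,L13,CS16} and no argument is given. Your sketch correctly reconstructs the approach of those references --- monotonicity of a twisted $\mathcal{W}$-functional along the generalized flow (with $\alpha\geq0$ ensuring the sign works), a uniform lower bound on $\mu_\alpha$, the resulting family of log-Sobolev inequalities, and the classical passage to an $L^2$-Sobolev inequality.

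One point of logical hygiene deserves mention. You invoke Theorem~\ref{theorem:Perelman-estimate-along-generalized KRF} to bound $\mathbf{R}_{g_t}-2\tr_{\omega_t}\alpha$ from above and thereby remove the twisted-scalar-curvature term from the log-Sobolev inequality. This is fine, but you should be explicit that there is no circularity: the Perelman-type estimates in Theorem~\ref{theorem:Perelman-estimate-along-generalized KRF} are established (in \cite{SesumTian,L13,CS16}) via monotonicity and noncollapsing arguments that use only the entropy-derived Sobolev inequality \emph{with} the twisted scalar curvature term (the form appearing later as \eqref{equation:Ye-Sobolev-along-flow}), not the clean form you are proving. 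Indeed the paper itself records that intermediate form in \eqref{equation:Ye-Sobolev-along-flow} as depending only on the lower bound of $R(\omega_0)-\tr_{\omega_0}\alpha$ and the initial Sobolev constant. So the correct order is: entropy monotonicity $\Rightarrow$ Sobolev with twisted scalar curvature $\Rightarrow$ Perelman estimates $\Rightarrow$ upper bound on $R-\tr\alpha$ $\Rightarrow$ clean Sobolev (Theorem~\ref{theorem:Sobolev-inequality}). Your write-up reverses the presentation slightly by appealing to Theorem~\ref{theorem:Perelman-estimate-along-generalized KRF} at the outset; it would be cleaner to first derive the log-Sobolev with the $(\mathbf{R}_{g_t}-2\tr_{\omega_t}\alpha)$ term still present, and only then invoke the Perelman bounds.
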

 
The following is Futaki's weighted Poincar\'e inequality in the generalized setting (which still holds since $\alpha$ is nonnegative). Note that this lemma will play a crucial role in our proof of Theorem \ref{theorem:main-tesult}.
\begin{lemma}[\cite{F98,TianZhu07,L13,CS16}]
\label{lemma:Poincare-inequality}
Let $\omega\in[\omega_0]$ be any K\"ahler form. Then for any function $u\in W^{1,2}(X)$ with $\int_X u e^{f_\omega}\omega^n=0$, we have
$$\int_X u^2e^{f_\omega}\omega^n\leq\int_X|\nabla u|^2e^{f_\omega}\omega^n.$$
\end{lemma}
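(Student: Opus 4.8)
The plan is to reduce Futaki's weighted Poincar\'e inequality to a lower bound for the first nonzero eigenvalue of a weighted Laplacian, and then to establish that eigenvalue bound by a Bochner--Kodaira argument in the spirit of Futaki; the only new feature compared with the classical Fano case is the extra nonnegative term $\alpha$, which can only help.

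First I would set up the weighted operator. Write $\mu:=e^{f_\omega}\omega^n$ for the weighted volume measure and let $\Delta_{f_\omega}$ denote the associated drift Laplacian on functions, i.e. the operator self-adjoint on $L^2(X,\mu)$ determined by $\int_X\Delta_{f_\omega}u\cdot v\,\mu=-\int_X\langle\nabla u,\nabla v\rangle\,\mu$; explicitly $\Delta_{f_\omega}u=\Delta_\omega u+\langle\nabla f_\omega,\nabla u\rangle$, where $\Delta_\omega=g^{i\bar j}\partial_i\partial_{\bar j}$ is the complex Laplacian of $\omega$. Since $X$ is compact, $-\Delta_{f_\omega}$ has discrete spectrum $0=\lambda_0<\lambda_1\le\lambda_2\le\cdots$, and by the variational characterization of $\lambda_1$ as the infimum of $\big(\int_X|\nabla u|^2\,\mu\big)\big/\big(\int_X u^2\,\mu\big)$ over nonconstant $u$ with $\int_X u\,\mu=0$, the inequality claimed in the lemma is exactly the statement $\lambda_1\ge1$. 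Since this infimum is attained, it suffices to prove that every eigenfunction $\phi$ with $\Delta_{f_\omega}\phi=-\lambda\phi$, $\lambda>0$, satisfies $\lambda\ge1$. Normalize $\int_X\phi^2\mu=1$, so that $\int_X|\nabla\phi|^2\mu=\lambda$ and $\int_X(\Delta_{f_\omega}\phi)^2\mu=\lambda^2$.

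Next I would invoke the weighted Bochner--Kodaira identity. The vector field $\nabla^{1,0}\phi$ is a section of $T^{1,0}X$ whose failure to be holomorphic is measured by $\bar\partial\nabla^{1,0}\phi$ (equivalently by the $(0,2)$-part of the Hessian of $\phi$). Integrating $(\Delta_{f_\omega}\phi)^2$ by parts twice with respect to $\mu$ and using the Ricci commutation identity yields
\[
\int_X(\Delta_{f_\omega}\phi)^2\,\mu=\int_X\big|\bar\partial\nabla^{1,0}\phi\big|^2_\omega\,\mu+\int_X\big(\Ric(\omega)-\idd f_\omega\big)\big(\nabla^{1,0}\phi,\overline{\nabla^{1,0}\phi}\big)\,\mu .
\]
By the defining relation \eqref{equation:generalized-Ricci-potential} of $f_\omega$ we have $\Ric(\omega)-\idd f_\omega=\omega+\alpha$, and since $\alpha\ge0$ this is $\ge\omega$; hence
\[
\int_X\big(\Ric(\omega)-\idd f_\omega\big)\big(\nabla^{1,0}\phi,\overline{\nabla^{1,0}\phi}\big)\,\mu\ \ge\ \int_X\big|\nabla^{1,0}\phi\big|^2_\omega\,\mu=\int_X|\nabla\phi|^2\,\mu=\lambda .
\]
Dropping the manifestly nonnegative first term of the identity we obtain $\lambda^2\ge\lambda$, i.e. $\lambda\ge1$, which completes the proof.

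I do not expect a genuine obstacle here. The one point that demands care is the bookkeeping of conventions in the Bochner--Kodaira identity---the precise form of the drift term, the sign of the $\idd f_\omega$ contribution, and the normalization relating $|\nabla\phi|^2$, $|\nabla^{1,0}\phi|^2_\omega$ and the eigenvalue---so that the resulting constant comes out to be exactly $1$; these are standard computations on K\"ahler manifolds, carried out in the classical case in \cite{F98,TianZhu07}, and the only difference in the present setting is the appearance of $\omega+\alpha$ in place of $\omega$, which, because $\alpha\ge0$, only improves the curvature estimate. One should also note that $\mu=e^{f_\omega}\omega^n$ need not be a probability measure, but this is immaterial since the inequality is homogeneous under rescaling of $\mu$.
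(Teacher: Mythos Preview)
The paper does not supply its own proof of this lemma; it is stated with citations to \cite{F98,TianZhu07,L13,CS16} and used as a black box. Your proposal is a faithful rendition of the standard Futaki--Tian--Zhu argument from those references: reduce to $\lambda_1(-\Delta_{f_\omega})\ge1$, apply the weighted Bochner--Kodaira identity for an eigenfunction, and observe that in the twisted setting the curvature term is $\Ric(\omega)-\idd f_\omega=\omega+\alpha\ge\omega$ because $\alpha\ge0$, so the classical inequality goes through unchanged. This is exactly the proof one finds in the cited sources, and your remark that the only delicate point is bookkeeping the sign and normalization conventions so the constant comes out to be $1$ is accurate; there is no genuine obstacle.
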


\subsection{Applying Ricci inverse operator along the flow}
\label{section:calabi-yau-along-flow}
\hfill\\
The flow \eqref{equation:generalized KRF} preserves the cohomological class of $\omega_t$, so we have
$$\omega_t+\alpha\in 2\pi c_1(X).$$
Therefore we can apply the Calabi-Yau theorem to obtain a family of K\"ahler forms $\eta_t\in[\omega_0]$ such that
\begin{equation}
    \label{equation:Calabi-Yau}
    Ric(\eta_t)=\omega_t+\alpha.
\end{equation}
(Similar consideration also appeared in Rubinstein's work \cite[Section 9]{R08}.)
Note that $\eta_t$ satisfies the following Monge-Amp\`ere equation:
\begin{equation}
    \label{equation:MA-equation}
    \eta_t^n=e^{f_{\omega_t}}\omega_t^n.
\end{equation}
Since $\eta_t$ and $\omega_t$ are in the same K\"ahler class, we can write
\begin{equation}
    \label{equation:eta=omega-idd-phi}
    \eta_t=\omega_t-\idd\phi_t
\end{equation}
for some $\phi_t\in C^\infty(X,\mathbb{R})$. It is clear that $\phi_t$ and $f_{\eta_t}$ only differ by a constant (recall \eqref{equation:generalized-Ricci-potential}).
The main result of this section is the following

\begin{proposition}
\label{theorem:osc-diam-bound}
There exists a uniform constant $C=C(n,\omega_0,\alpha)$ depending only on the initial data such that
$$osc_X\phi_t+diam(X,\eta_t)\leq C.$$
\end{proposition}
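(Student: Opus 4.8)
The plan is to bound $\osc_X\phi_t$ first and then deduce the diameter bound for $\eta_t$ from the diameter bound for $\omega_t$ (Theorem \ref{theorem:Perelman-estimate-along-generalized KRF}) together with the Monge--Amp\`ere equation \eqref{equation:MA-equation}. For the oscillation bound, I would combine the complex Monge--Amp\`ere equation $\eta_t^n=(\omega_t-\idd\phi_t)^n=e^{f_{\omega_t}}\omega_t^n$ with the uniform control $|f_{\omega_t}|\leq C$ from Theorem \ref{theorem:Perelman-estimate-along-generalized KRF}. Since the right-hand side $e^{f_{\omega_t}}$ is uniformly bounded above and below in $L^\infty$ (hence in every $L^p$ with respect to $\omega_t^n$, whose total mass $V$ is fixed), and since the background metrics $\omega_t$ themselves have uniformly bounded geometry in the relevant sense (uniform Sobolev constant by Theorem \ref{theorem:Sobolev-inequality}, uniform diameter, non-collapsed fixed volume), a Ko{\l}odziej-type $L^\infty$ estimate for the Monge--Amp\`ere equation gives $\osc_X\phi_t\leq C$ with $C$ depending only on $n$, $V$, and the uniform Sobolev/diameter constants — i.e.\ only on the initial data $(n,\omega_0,\alpha)$.

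Concretely, the key steps are as follows. First, normalize $\phi_t$ by $\sup_X\phi_t=0$ (this is the natural normalization for the $L^\infty$ estimate; the difference between this and the $f_{\eta_t}$ normalization is itself a bounded constant, which I would record at the end). Second, run the standard Alexandrov--Bakelman--Pucci / De Giorgi--Ko{\l}odziej iteration: set $V(s)=\int_{\{\phi_t<-s\}}\omega_t^n$, use the Monge--Amp\`ere equation to control $\int_{\{\phi_t<-s\}}\eta_t^n=\int_{\{\phi_t<-s\}}e^{f_{\omega_t}}\omega_t^n\leq e^{C}V(s)$, and feed this into the Sobolev inequality of Theorem \ref{theorem:Sobolev-inequality} applied to (a truncation of) $-\phi_t$ to obtain a differential inequality for $V(s)$ forcing $V(s)=0$ for $s$ larger than a uniform constant. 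This is exactly Ko{\l}odziej's argument made uniform; the only inputs are the uniform Sobolev constant, the uniform $L^\infty$ bound on $e^{f_{\omega_t}}$, and the fixed volume $V$. Third, having $\osc_X\phi_t\leq C$, deduce the diameter bound: by \eqref{equation:eta=omega-idd-phi} we have $\eta_t\leq \omega_t+\text{(bounded)}\,\idd(-\phi_t)$ is not directly a metric comparison, so instead I would argue that $\eta_t^n=e^{f_{\omega_t}}\omega_t^n$ with $e^{f_{\omega_t}}$ bounded above and below, combined with a lower bound on $\eta_t$ coming from the $C^2$ estimate (Aubin--Yau: $\tr_{\omega_t}\eta_t$ is bounded above in terms of a lower bound for the bisectional curvature of $\omega_t$ and $\osc\phi_t$), or more robustly invoke that a uniform Sobolev constant plus bounded volume plus the Monge--Amp\`ere bound gives, via Ko{\l}odziej or via Zhang-type non-inflating/non-collapsing estimates, a uniform diameter bound for $\eta_t$ once $\osc\phi_t$ is controlled.

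The main obstacle I expect is the diameter bound for $\eta_t$ rather than the oscillation bound. The oscillation bound is a clean application of the uniform Sobolev inequality to the Monge--Amp\`ere equation, since all the needed quantities ($C_S$, $\|f_{\omega_t}\|_{L^\infty}$, $V$) are already uniformly controlled by Theorems \ref{theorem:Perelman-estimate-along-generalized KRF} and \ref{theorem:Sobolev-inequality}. The diameter bound, however, requires converting the integral/potential control into a genuine metric statement; the cleanest route is probably to note that $\Ric(\eta_t)=\omega_t+\alpha\geq0$ (since $\omega_t$ is a metric and $\alpha\geq0$), so $\eta_t$ has \emph{nonnegative} Ricci curvature, and then combine Bishop--Gromov with a volume lower bound for unit balls — but getting the latter uniformly may still need the $\osc\phi_t$ bound and a Ko{\l}odziej-type stability/$C^0$ estimate comparing $\eta_t$ to $\omega_t$. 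Alternatively, one uses that $\eta_t\in[\omega_0]$ is a fixed cohomology class with $\Ric(\eta_t)\geq0$ and $\eta_t^n=e^{f_{\omega_t}}\omega_t^n$ has density bounded in $L^\infty$ relative to the uniformly-bounded-geometry metrics $\omega_t$, which by the work cited in the paper (Zhang-type arguments) yields the diameter bound. I would present the Ricci-nonnegativity observation plus Bishop--Gromov plus a uniform non-collapsing of $\eta_t$-balls (itself following from $\osc\phi_t\leq C$ and the non-collapsing of $\omega_t$) as the cleanest packaging.
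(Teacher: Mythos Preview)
Your oscillation argument is plausible though framed differently from the paper's. The paper runs Yau's classical Moser iteration: it normalizes $\int_X\phi_t e^{f_{\omega_t}}\omega_t^n=0$, uses Futaki's weighted Poincar\'e inequality (Lemma~\ref{lemma:Poincare-inequality}) to get an initial $L^2$ bound on $\phi_t$, and then iterates using the uniform Sobolev inequality. A Ko{\l}odziej-type level-set argument with the Sobolev inequality as input should also work here, so this part is a matter of taste.

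The diameter bound, however, has a genuine gap. Every route you propose ultimately rests on a metric comparison between $\eta_t$ and $\omega_t$: the Aubin--Yau $C^2$ estimate needs a lower bound on the bisectional curvature of $\omega_t$, which you do not have along the flow; and ``non-collapsing of $\eta_t$-balls follows from $\osc\phi_t\leq C$ and non-collapsing of $\omega_t$'' is not true without knowing that an $\eta_t$-ball contains an $\omega_t$-ball of comparable radius, i.e.\ without $\omega_t\leq C\eta_t$, which is again the unavailable $C^2$ estimate. Bishop--Gromov alone, fed only the total volume $V$ and $\Ric(\eta_t)\geq0$, gives no diameter upper bound.

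The paper's key idea, which you are missing, is that Lemma~\ref{lemma:Poincare-inequality} applies to \emph{any} K\"ahler form in $[\omega_0]$, in particular to $\eta_t$. Since $f_{\eta_t}$ and $\phi_t$ differ by a constant, the oscillation bound gives $|f_{\eta_t}|\leq C$. One then applies the weighted Poincar\'e inequality with respect to $\eta_t$ to the distance function $d_1(x)=d_{\eta_t}(x,p)$ (minus its weighted mean), obtaining
\[
\int_X |d_1-\overline{d_1}|^2 e^{f_{\eta_t}}\eta_t^n \;\leq\; \int_X |\nabla_{\eta_t} d_1|^2 e^{f_{\eta_t}}\eta_t^n \;\leq\; C\cdot\vol(X,\eta_t).
\]
On the other hand, if $d=\operatorname{diam}(X,\eta_t)$ then on $B_{\eta_t}(p,d/4)$ one has $|d_1-\overline{d_1}|\geq d/4$, so the left side is at least $cd^2\vol(B_{\eta_t}(p,d/4))$. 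Now $\Ric(\eta_t)\geq0$ and Bishop--Gromov bound the ratio $\vol(X)/\vol(B_{\eta_t}(p,d/4))\leq 4^{2n}$, yielding $d^2\leq C$. The point is that the Poincar\'e inequality furnishes the spectral input that substitutes for the missing metric comparison.
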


The proof will be divided into two parts. We first prove the oscillation estimate following Yau's approach and then derive the diameter bound.

\begin{lemma}
\label{lemma:Yau-osc-estimate}
There exists a uniform constant $C=C(n,\omega_0,\alpha)$ depending only on the initial data such that
$$osc_X\phi_t\leq C.$$
\end{lemma}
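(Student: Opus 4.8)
The plan is to prove the oscillation bound for $\phi_t$ using Yau's $C^0$ estimate for the complex Monge-Amp\`ere equation, in the form that gives an explicit dependence on the data entering the equation. Recall that $\eta_t = \omega_t - \idd\phi_t$ solves the Monge-Amp\`ere equation $(\omega_t - \idd\phi_t)^n = e^{f_{\omega_t}}\omega_t^n$. The right-hand side is controlled: by Theorem \ref{theorem:Perelman-estimate-along-generalized KRF} we have $|f_{\omega_t}| \leq C$ uniformly, so $e^{f_{\omega_t}}$ is bounded above and below by positive constants depending only on the initial data. Thus $\phi_t$ solves a Monge-Amp\`ere equation on $(X,\omega_t)$ with uniformly bounded right-hand side, normalized so that $\int_X e^{f_{\omega_t}}\omega_t^n = V$ matches $\int_X \omega_t^n = V$ (the class is fixed), so the equation is compatible.

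The key point is that Yau's $L^\infty$ estimate can be made uniform here. The standard Moser iteration proof of the $C^0$ estimate (as in Yau, or Ko\l odziej's pluripotential refinement) produces a bound $\osc_X \phi_t \leq C$ where $C$ depends only on $n$, the Sobolev constant of $(X,\omega_t)$, and $\|e^{f_{\omega_t}}\|_{L^p}$ for some $p > 1$ (any $p>1$ suffices once one has a Sobolev inequality). All three ingredients are uniformly controlled along the flow: the Sobolev constant by Theorem \ref{theorem:Sobolev-inequality}, and $\|e^{f_{\omega_t}}\|_{L^\infty} \leq e^C$ by Theorem \ref{theorem:Perelman-estimate-along-generalized KRF}. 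So I would first normalize $\phi_t$ (say $\sup_X \phi_t = 0$ or $\int_X \phi_t \,\omega_t^n = 0$), then run the Moser iteration: multiply the equation in the form $\Delta_{\omega_t}\phi_t \geq -n + (\text{something})$... more precisely, one uses that $\tr_{\omega_t}\eta_t > 0$ to get a differential inequality, tests against powers of $(-\phi_t)$ or $(\phi_t - \sup\phi_t)$, applies the Sobolev inequality of Theorem \ref{theorem:Sobolev-inequality}, and iterates to pass from an $L^1$ (or $L^2$) bound to an $L^\infty$ bound. The $L^1$ bound on $\phi_t$ that seeds the iteration follows from the normalization together with the Green's function / Poincar\'e-type estimate, again using the uniform Sobolev constant.

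I expect the main technical obstacle to be bookkeeping the uniformity: one must verify that every constant produced by the Moser iteration depends only on $n$, $C_S$, and the $L^\infty$ bound on $f_{\omega_t}$ — and not, for instance, on some Sobolev constant of the evolving metric $\eta_t$ (which we have not controlled) or on lower-order geometric quantities of $\omega_t$ beyond what Theorem \ref{theorem:Perelman-estimate-along-generalized KRF} and Theorem \ref{theorem:Sobolev-inequality} provide. The cleanest route is to carry out the entire iteration intrinsically with respect to $\omega_t$, using only the Sobolev inequality for $\omega_t$ and the bound on $e^{f_{\omega_t}}$; the standard references (Yau, Tian's book, or the Siu/Ko\l odziej exposition) do exactly this, with the dependence on the data made explicit. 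Once $\osc_X\phi_t \leq C$ is established, the diameter bound $\diam(X,\eta_t) \leq C$ in Proposition \ref{theorem:osc-diam-bound} will follow from a separate argument (comparing $\eta_t$ with $\omega_t$ using the oscillation bound and the Monge-Amp\`ere equation, together with the diameter bound for $\omega_t$ from Theorem \ref{theorem:Perelman-estimate-along-generalized KRF}), but that is the content of the second half of the proposition and lies outside the present lemma.
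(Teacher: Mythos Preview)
Your overall plan is correct and matches the paper: this is Yau's $C^0$ estimate via Moser iteration, with Theorem~\ref{theorem:Sobolev-inequality} supplying the Sobolev inequality for the iteration step and Theorem~\ref{theorem:Perelman-estimate-along-generalized KRF} supplying the uniform bound on $e^{f_{\omega_t}}$. The paper carries out the iteration exactly as you outline, entirely with respect to $\omega_t$, so the bookkeeping worry you raise does not materialize.

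The one place where you are vague and the paper is specific is the seed $L^2$ bound. You say it ``follows from the normalization together with the Green's function / Poincar\'e-type estimate, again using the uniform Sobolev constant.'' Be careful: the Sobolev inequality of Theorem~\ref{theorem:Sobolev-inequality} carries an $\int u^2$ term on the right and does \emph{not} by itself yield an ordinary Poincar\'e inequality or a Green's-function lower bound (the latter needs the strong Sobolev inequality, cf.\ the appendix). The paper instead uses Futaki's \emph{weighted} Poincar\'e inequality, Lemma~\ref{lemma:Poincare-inequality}, which holds with constant $1$ for \emph{every} K\"ahler form in $[\omega_0]$, independently of any Sobolev data. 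Concretely, they normalize $\int_X \phi_t\, e^{f_{\omega_t}}\omega_t^n = 0$, obtain $\int_X|\nabla\phi_t|^2\omega_t^n \leq n\int_X|\phi_t|\,\omega_t^n$ from the integration-by-parts identity $\int\phi(\eta^n-\omega^n)=\int\sqrt{-1}\,\partial\phi\wedge\bar\partial\phi\wedge\sum\eta^i\wedge\omega^{n-1-i}$, and then chain Lemma~\ref{lemma:Poincare-inequality} with H\"older and the bound on $|f_{\omega_t}|$ to get $\|\phi_t\|_{L^2(\omega_t)}\leq C$. After that, the Moser iteration runs exactly as you describe. So your proposal is essentially the paper's proof, provided you substitute Lemma~\ref{lemma:Poincare-inequality} for the unspecified ``Poincar\'e from Sobolev'' step.
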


\begin{proof}
This is standard. We include a proof for reader's convenience, following the exposition in \cite{SongWeinkov}.
% The key point is that, thanks to Lemma \ref{lemma:Poincare-inequality}, one can avoid using Green's function.
For simplicity, we will abbreviate the subscript $t$. We may assume that
$$\int_X\phi \eta^n=\int_X\phi e^{f_\omega}\omega^n=0.$$
So it is enough to derive a uniform bound for $||\phi||_{C^0}$.

First, we have
\begin{equation*}
    \begin{aligned}
        \int_X|\phi|\omega^n\geq\int_X-\phi\omega^n&=\int_X\phi(\eta^n-\omega^n)\\
        &=-\int_X\phi\idd\phi\wedge\sum_{i=0}^{n-1}\eta^i\wedge\omega^{n-1-i}\\
        &=\int_X\sqrt{-1}\partial\phi\wedge\bar{\partial}\phi\wedge\sum_{i=0}^{n-1}\eta^i\wedge\omega^{n-1-i}\\
        &\geq\int_X\sqrt{-1}\partial\phi\wedge\bar{\partial}\phi\wedge\omega^{n-1}\\
        &=\frac{1}{n}\int_X|\nabla\phi|^2\omega^n.\\
    \end{aligned}
\end{equation*}
Then we apply Futaki's weighted Poincar\'e inequality (Lemma \ref{lemma:Poincare-inequality}) and H\"older inequality to derive
\begin{equation*}
\begin{aligned}
     \int_X\phi^2e^{f_\omega}\omega^n&\leq\int_X|\nabla \phi|^2e^{f_\omega}\omega^n\\
     &\leq e^{\sup_X f_\omega}\int_X|\nabla\phi|^2\omega^n
     \leq ne^{\sup_X f_\omega}\int_X|\phi|\omega^n\\
     &\leq ne^{\osc_Xf_\omega}\int_X|\phi|e^{f_\omega}\omega^n
     \leq ne^{\osc_Xf_\omega}\bigg(\int_X|\phi|^2e^{f_\omega}\omega^n\bigg)^{\frac{1}{2}}\bigg(\int_Xe^{f_\omega}\omega^n\bigg)^{\frac{1}{2}}
\end{aligned}
\end{equation*}
Thus we get
$$\int_X\phi^2e^{f_\omega}\omega^n\leq n^2e^{2\osc_Xf_\omega}V.$$
Now using the fact that $|f_\omega|$ is uniformly bounded (recall Theorem \ref{theorem:Perelman-estimate-along-generalized KRF}), we immediately get an $L^2$ bound:
\begin{equation}
    \label{equation:L^2-bound}
    ||\phi||_{L^2(\omega)}=\bigg(\int_X\phi^2\omega^n\bigg)^{\frac{1}{2}}\leq C_1
\end{equation}
for some constant $C_1=C(n,\omega_0,\alpha)$.

Now for any $p\geq 1$, using the fact the $x|x|^{p-1}$ is a differentiable function with derivative $p|x|^{p-1}$, we have
\begin{equation*}
    \begin{aligned}
        (e^{\sup_Xf_\omega}-1)\int_X|\phi|^p\omega^n&\geq\int_X|\phi|^p(\eta^n-\omega^n)\\
        &\geq\int_X\phi|\phi|^{p-1}(\eta^n-\omega^n)\\
        &=-\int_X\phi|\phi|^{p-1}\idd\phi\wedge\sum_{i=0}^{n-1}\eta^i\wedge\omega^{n-1-i}\\
        &=p\int_X|\phi|^{p-1}\sqrt{-1}\partial\phi\wedge\bar{\partial}\phi\wedge\sum_{i=0}^{n-1}\eta^i\wedge\omega^{n-1-i}\\
        &\geq\frac{4p}{(p+1)^2}\int_X\sqrt{-1}\partial(\phi|\phi|^{\frac{p-1}{2}})\wedge\bar{\partial}(\phi|\phi|^{\frac{p-1}{2}})\wedge\omega^{n-1}\\
        &=\frac{4p}{(p+1)^2n}\int_X|\nabla (\phi|\phi|^{\frac{p-1}{2}})|^2\omega^n.
    \end{aligned}
\end{equation*}
It then follows that (keeping in mind that $|f_\omega|$ is uniformly bounded)
\begin{equation*}
    \int_X|\nabla (\phi|\phi|^{\frac{p-1}{2}})|^2\omega^n\leq C_2p\int_X|\phi|^p\omega^n
\end{equation*}
for some constant $C_2=C(n,\omega_0,\alpha)$. Then applying Sobolev inequality (Theorem \ref{theorem:Sobolev-inequality}) to the function $\phi|\phi|^{\frac{p-1}{2}}$, we get
\begin{equation}
    \label{equation:Sobolev-for-phi^p}
    \bigg(\int_X|\phi|^{\frac{n(p+1)}{n-1}}\bigg)^{\frac{n-1}{n}}\leq C_S\bigg(C_2p\int_X|\phi|^p\omega^n+\int_X|\phi|^{p+1}\omega^n\bigg).
\end{equation}
Notice that
\begin{equation*}
    \begin{aligned}
        \int_X|\phi|^p\omega^n&=\int_{|\phi|<1}|\phi|^p\omega^n+\int_{|\phi|\geq1}|\phi|^p\omega^n\\
        &\leq\int_{|\phi|<1}\omega^n+\int_{|\phi|\geq1}|\phi|^{p+1}\omega^n
        \leq V+\int_X|\phi|^{p+1}\omega^n.
    \end{aligned}
\end{equation*}
So it follows from \eqref{equation:Sobolev-for-phi^p} that
$$ \bigg(\int_X|\phi|^{\frac{n(p+1)}{n-1}}\bigg)^{\frac{n-1}{n}}\leq C_3p\bigg(1+\int_X|\phi|^{p+1}\omega^n\bigg)\leq 2C_3p\max\bigg\{1,\int_X|\phi|^{p+1}\omega^n\bigg\}$$
for some constant $C_3=C(n,\omega_0,\alpha)$. Thus we obtain
\begin{equation*}
    \max\bigg\{1,||\phi||_{L^{\frac{n(p+1)}{n-1}}(\omega)}\bigg\}\leq (2C_3)^{\frac{1}{p+1}}p^{\frac{1}{p+1}}\max\bigg\{1,||\phi||_{L^{p+1}(\omega)}\bigg\}
\end{equation*}
Then standard Moser iteration gives
$$\max\bigg\{1,||\phi||_{L^\infty}\bigg\}\leq C_4\max\bigg\{1,||\phi||_{L^2(\omega)}\bigg\}$$
for some constant $C_4=C(n,\omega_0,\alpha)$. Combining this with \eqref{equation:L^2-bound}, we finish the proof.
\end{proof}

Since the generalized Ricci potential $f_{\eta_t}$ and $\phi_t$ only differ by a constant, the next result follows immediately from Lemma \ref{lemma:Yau-osc-estimate}.
\begin{corollary}
\label{corollary:bound-on-ricci-potential-for-eta}
There exists some constant $C=C(n,\omega_0,\alpha)>0$ such that
$$|f_{\eta_t}|\leq C$$
\end{corollary}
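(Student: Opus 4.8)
The statement to prove is Corollary \ref{corollary:bound-on-ricci-potential-for-eta}: $|f_{\eta_t}| \leq C$ for a uniform constant $C = C(n, \omega_0, \alpha)$.

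The plan is as follows. The key observation is that $\eta_t = \omega_t - \idd\phi_t$ by \eqref{equation:eta=omega-idd-phi}, and from \eqref{equation:Calabi-Yau} we have $\Ric(\eta_t) = \omega_t + \alpha$. Comparing with the defining equation \eqref{equation:generalized-Ricci-potential} for the generalized Ricci potential, namely $\Ric(\eta_t) = \eta_t + \alpha + \idd f_{\eta_t}$, we obtain
$$
\omega_t + \alpha = (\omega_t - \idd\phi_t) + \alpha + \idd f_{\eta_t},
$$
so that $\idd(f_{\eta_t} - \phi_t) = 0$, i.e. $f_{\eta_t} - \phi_t$ is a constant on the (connected, compact) manifold $X$. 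Hence $f_{\eta_t} = \phi_t + c_t$ for some $c_t \in \mathbb{R}$, and so $\osc_X f_{\eta_t} = \osc_X \phi_t$, which is uniformly bounded by Lemma \ref{lemma:Yau-osc-estimate}. It therefore remains only to pin down the constant $c_t$, i.e. to bound the mean value of $f_{\eta_t}$.

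For this one uses the normalization in \eqref{equation:generalized-Ricci-potential}: $\int_X e^{f_{\eta_t}} \eta_t^n = V$. Combined with $\int_X \eta_t^n = V$ (since $[\eta_t] = [\omega_0] = 2\pi c_1(L)$ has volume $V$), Jensen's inequality applied to the convex function $e^x$ against the probability measure $\eta_t^n / V$ gives $\frac{1}{V}\int_X e^{f_{\eta_t}} \eta_t^n \geq \exp\big(\frac{1}{V}\int_X f_{\eta_t} \eta_t^n\big)$, hence $\frac{1}{V}\int_X f_{\eta_t}\eta_t^n \leq 0$. On the other hand, $\sup_X f_{\eta_t} \geq \frac{1}{V}\int_X f_{\eta_t}\eta_t^n$, so that if $f_{\eta_t}$ were large everywhere it would contradict the normalization; more directly, since $\osc_X f_{\eta_t} \leq C'$ is bounded and the average of $f_{\eta_t}$ (against $\eta_t^n/V$) is $\leq 0$, we get $\sup_X f_{\eta_t} \leq C'$, and then $\inf_X f_{\eta_t} \geq \sup_X f_{\eta_t} - C' \geq -C'$; but also we need a lower bound on $\sup_X f_{\eta_t}$, which follows because $\int_X e^{f_{\eta_t}}\eta_t^n = V$ forces $\sup_X f_{\eta_t} \geq 0$ (otherwise the integral would be $< V$). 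Combining these, $|f_{\eta_t}| \leq C$ with $C$ depending only on $n, \omega_0, \alpha$ through the constant in Lemma \ref{lemma:Yau-osc-estimate}.

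I do not expect a genuine obstacle here — the corollary is, as the text says, an immediate consequence of the oscillation bound, the only subtlety being the bookkeeping to control the additive constant via the normalization of $f_{\eta_t}$. The one point to be careful about is to use the correct volume form ($\eta_t^n$, not $\omega_t^n$) in the normalization \eqref{equation:generalized-Ricci-potential} and the Jensen step, but since $\int_X \eta_t^n = \int_X \omega_t^n = V$ this causes no difficulty.
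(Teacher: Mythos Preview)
Your proof is correct and follows the same route as the paper: the paper simply notes that $f_{\eta_t}$ and $\phi_t$ differ by a constant and declares the corollary an immediate consequence of Lemma~\ref{lemma:Yau-osc-estimate}, without writing out the normalization step. You have supplied exactly the standard details the paper omits --- using $\int_X e^{f_{\eta_t}}\eta_t^n = V = \int_X \eta_t^n$ to pin down the additive constant via $\sup_X f_{\eta_t}\geq 0$ and (through Jensen or directly) $\inf_X f_{\eta_t}\leq 0$.
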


Now we are ready to prove the following diameter bound.
\begin{lemma}
\label{lemma:diameter-bound}
There exists some constant $C=C(n,\omega_0,\alpha)>0$ such that
$$diam(X,\eta_t)\leq C.$$
\end{lemma}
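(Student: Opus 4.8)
The plan is to bound $\operatorname{diam}(X,\eta_t)$ by squeezing the first non‑zero eigenvalue $\lambda_1(X,\eta_t)$ of the $\eta_t$‑Laplacian between a uniform positive constant from below and a multiple of $\operatorname{diam}(X,\eta_t)^{-2}$ from above. The conceptual point is that one should \emph{not} try to compare $\eta_t$‑distances with $\omega_t$‑distances directly: that would require a second‑order estimate on $\phi_t$, which is unavailable here precisely because $\omega_t$ carries no uniform Ricci lower bound along the flow. Running everything through the spectral gap sidesteps this.

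For the lower bound on $\lambda_1$, I would apply Futaki's weighted Poincar\'e inequality (Lemma \ref{lemma:Poincare-inequality}) to the K\"ahler form $\eta_t\in[\omega_0]$ (legitimate since $\alpha\geq0$): for every $u$ with $\int_X u\,e^{f_{\eta_t}}\eta_t^n=0$,
\[
\int_X u^2 e^{f_{\eta_t}}\eta_t^n\leq\int_X|\nabla u|^2 e^{f_{\eta_t}}\eta_t^n,
\]
the gradient taken with respect to $\eta_t$. By Corollary \ref{corollary:bound-on-ricci-potential-for-eta} we have $|f_{\eta_t}|\leq C=C(n,\omega_0,\alpha)$, so the weight is pinched, $e^{-C}\leq e^{f_{\eta_t}}\leq e^{C}$. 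A routine mean‑value adjustment — given $v$ with $\int_X v\,\eta_t^n=0$, apply the inequality to $v$ minus its $e^{f_{\eta_t}}\eta_t^n$‑average and use $\int_X v\,\eta_t^n=0$ — then yields $\int_X v^2\eta_t^n\leq e^{2C}\int_X|\nabla v|^2\eta_t^n$ for every $\eta_t^n$‑mean‑zero $v$, i.e. $\lambda_1(X,\eta_t)\geq e^{-2C}=:\lambda_0>0$, with $\lambda_0$ depending only on $n,\omega_0,\alpha$.

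For the upper bound on $\lambda_1$, I would use \eqref{equation:Calabi-Yau}: $\Ric(\eta_t)=\omega_t+\alpha\geq0$ because $\omega_t$ is K\"ahler and $\alpha$ is non‑negative, so $(X,\eta_t)$ is a closed Riemannian $2n$‑manifold with non‑negative Ricci curvature. Cheng's eigenvalue comparison theorem then gives $\lambda_1(X,\eta_t)\leq C(n)/\operatorname{diam}(X,\eta_t)^2$ for a dimensional constant $C(n)$ (arising as the first Dirichlet eigenvalue of a Euclidean ball of radius $\operatorname{diam}(X,\eta_t)/2$). Combining the two bounds, $\lambda_0\leq C(n)\operatorname{diam}(X,\eta_t)^{-2}$, hence $\operatorname{diam}(X,\eta_t)\leq\sqrt{C(n)/\lambda_0}=\sqrt{C(n)}\,e^{C}$, which depends only on $n,\omega_0,\alpha$.

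The argument is short; the only step needing care is the passage from Futaki's \emph{weighted} inequality to a genuine lower bound for the ordinary first eigenvalue, where one checks that replacing the measure $e^{f_{\eta_t}}\eta_t^n$ by $\eta_t^n$ costs only a factor controlled by $\osc_X f_{\eta_t}$, which is uniform by the oscillation estimate of Lemma \ref{lemma:Yau-osc-estimate} (equivalently Corollary \ref{corollary:bound-on-ricci-potential-for-eta}). The spectral upper bound is a black‑box application of a classical comparison theorem using nothing beyond $\Ric(\eta_t)\geq0$; I expect this to be the one genuinely external ingredient, and the main ``obstacle'' is really just the realisation that the diameter should be accessed spectrally rather than metrically.
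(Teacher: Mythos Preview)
Your proof is correct and uses the same two ingredients as the paper --- Futaki's weighted Poincar\'e inequality for $\eta_t$ together with the uniform bound on $|f_{\eta_t}|$, and the fact that $\Ric(\eta_t)\geq0$ --- but the packaging is different. The paper does not pass through $\lambda_1$: instead it applies the weighted Poincar\'e inequality directly to the test function $d_1(x)=d_{\eta}(x,p)$ (where $p$ is one endpoint of a diameter), uses $|\nabla_\eta d_1|\leq1$ to bound the right-hand side, bounds the left-hand side from below on the ball $B_\eta(p,d/4)$ where $|d_1-\overline{d_1}|\geq d/4$, and then invokes Bishop--Gromov relative volume comparison (from $\Ric(\eta)>0$) to control the resulting volume ratio $\vol(B_\eta(p,d))/\vol(B_\eta(p,d/4))$. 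Your spectral route is slightly slicker to state and makes the logical structure transparent, at the cost of importing Cheng's comparison theorem as a black box; the paper's direct argument is self-contained and is, in effect, an in-place proof of the Cheng-type upper bound tailored to this situation. Either way the substance is the same: a uniform Poincar\'e constant (via $\osc f_{\eta_t}$) against a comparison-geometry estimate (via $\Ric(\eta_t)\geq0$).
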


\begin{proof}
For simplicity, we abbreviate the subscript $t$.
By \eqref{equation:Calabi-Yau}, it is clear that
$$\Ric(\eta)>0.$$
If one can prove a uniform $C^2$ estimate for the Monge-Amp\`ere equation \eqref{equation:MA-equation}, then one would get $\Ric(\eta)\geq c>0$ and the diameter bound follows readily from Myer's theorem. But this approach does not seem to work in our setting since we do not have enough curvature control along the flow. So here we use a different strategy. 

We put
$$d:=diam(X,\eta),$$
and assume that $d=d_\eta(p,q)$ for two points $p,q\in X$, where $d_\eta$ is the distance function induced by $\eta$. We define
$$d_1(x):=d_\eta(x,p),\ d_2(x):=d_\eta(x,q),\ x\in X.$$
Triangle inequality simply gives
$$d_1(x)+d_2(x)\geq d,\ x\in X.$$
Integrating both sides against the volume form $e^{f_\eta}\eta^n$, we get
$$\frac{1}{V}\int_Xd_1e^{f_\eta}\eta^n+\frac{1}{V}\int_Xd_2e^{f_\eta}\eta^n\geq d.$$
So we may assume that
$$\overline{d_1}:=\frac{1}{V}\int_Xd_1e^{f_\eta}\eta^n\geq\frac{d}{2}.$$
Now applying Lemma \ref{lemma:Poincare-inequality} to $d_1(x)-\overline{d_1}$, we get
$$\int_X|d_1-\overline{d_1}|^2e^{f_\eta}\eta^n\leq\int_X|\nabla_\eta d_1|_\eta^2e^{f_\eta}\eta^n\leq C(n)\exp(\sup_Xf_\eta)\vol(B_\eta(p,d)).$$
On the other hand, since $|d_1-\overline{d_1}|\geq\frac{d}{4}$ on the ball $B_\eta(p,\frac{d}{4})$, we have
$$\int_X|d_1-\overline{d_1}|^2e^{f_\eta}\eta^n\geq \int_{B_\eta(p,\frac{d}{4})}|d_1-\overline{d_1}|^2e^{f_\eta}\eta^n\geq\frac{d^2}{C(n)}\exp(\inf_Xf_\eta)\vol(B_\eta(p,\frac{d}{4})).$$
Thus we get
$$d^2\leq C(n)\exp(\osc_X f_\eta)\frac{\vol(B_\eta(p,d))}{\vol(B_\eta(p,\frac{d}{4}))}.$$
Using $Ric(\eta)>0$ and relative volume comparison, we have
$$d^2\leq C(n)\exp(\osc_X f_\eta)4^{2n}.$$
Then the desired diameter bound follows from Corollary \ref{corollary:bound-on-ricci-potential-for-eta}.
\end{proof}

% \begin{remark}
% Instead of solving \eqref{equation:Calabi-Yau}, one can also use Aubin's continuity method along the flow to produce a K\"ahler metric with strictly positive Ricci curvature (relying on the alpha invariant of $(X,[\omega_0])$). Then the diameter bound is automatic and one can get uniform $C^0$ bounds using $I,J$ functionals (cf. \cite{T87}). We leave this to interested readers. 
% \end{remark}

% \begin{remark}
% As pointed out to me by Prof. Zhenlei Zhang, one can also try to control the first eigenvalue of the Laplacian to derive the diameter bound (using Li-Yau estimate).
% \end{remark}

\subsection{Proof of Theorem \ref{theorem:main-tesult}}
\label{section:proof-of-main-result}
\hfill\\
As shown in the previous section, along the flow \eqref{equation:generalized KRF}, if we consider the K\"ahler form $\eta_t\in[\omega_0]$ such that $\Ric(\eta_t)=\omega_t+\alpha$, then
we have
$$Ric(\eta_t)>0,\ \vol(X,\eta_t)=C(n,[\omega_0]),\ diam(X,\eta_t)\leq C(\omega_0,\alpha).$$
So now we are in the setting where we can directly apply Theorem \ref{thm:LS}. We obtain the following
\begin{corollary}
\label{theorem:PC0-for-eta}
There exists a positive constant $b=b(n,\omega_0,\alpha)$ and a large integer $k=k(n,\omega_0,\alpha)$ such that
$$\rho_{\eta_t,k}\geq b>0$$
along the flow \eqref{equation:generalized KRF}.
\end{corollary}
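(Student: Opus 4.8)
The plan is to apply Theorem \ref{thm:LS} of Liu--Sz\'ekelyhidi directly to the polarized \K manifold $(X,\eta_t,L,h'_t)$, where $h'_t$ is a smooth Hermitian metric on $L$ chosen so that its Chern curvature equals $\eta_t$; such an $h'_t$ exists because $\eta_t\in[\omega_0]=2\pi c_1(L)$. Thus $(X,\eta_t,L,h'_t)$ is a genuine polarized \K manifold and $\rho_{\eta_t,k}$ is precisely its Bergman kernel with multiple $k$ in the sense of \eqref{equation:definition-Bergman-Kernel}.

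To invoke Theorem \ref{thm:LS} one must verify its two hypotheses with a \emph{uniform} (i.e.\ $t$-independent) constant. First, the Ricci lower bound: by \eqref{equation:Calabi-Yau} we have $\Ric(\eta_t)=\omega_t+\alpha$, and since $\omega_t$ is a \K form and $\alpha\geq0$ this gives $\Ric(\eta_t)>0>-\eta_t$. Second, the diameter bound: Lemma \ref{lemma:diameter-bound} furnishes a constant $C=C(n,\omega_0,\alpha)$ with $\mathrm{diam}(X,\eta_t)\leq C$ for all $t$. Setting $D:=\max\{1,C\}=D(n,\omega_0,\alpha)$, Theorem \ref{thm:LS} then produces $k=k(n,D)=k(n,\omega_0,\alpha)$ and $b=b(n,D)=b(n,\omega_0,\alpha)$ such that $\rho_{\eta_t,k}\geq b>0$ for every $t$, which is exactly the assertion.

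There is really no obstacle at this stage; the substance of the corollary lies entirely in the preceding Proposition \ref{theorem:osc-diam-bound} (concretely Lemma \ref{lemma:diameter-bound}), whose proof circumvents the lack of curvature control along the flow by combining Yau's $C^0$ estimate for the Monge--Amp\`ere equation \eqref{equation:MA-equation} with Futaki's weighted Poincar\'e inequality and relative volume comparison. Once that diameter bound is in hand, the present statement is an immediate application of the black box Theorem \ref{thm:LS}. The only point worth emphasising is that all constants above depend on $t$ solely through the uniform bounds established in Section \ref{section:calabi-yau-along-flow} and Theorem \ref{theorem:Perelman-estimate-along-generalized KRF}, and hence ultimately only on $n$ and the initial data $(\omega_0,\alpha)$.
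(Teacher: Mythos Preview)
Your proposal is correct and matches the paper's own argument essentially verbatim: the paper simply notes that $\Ric(\eta_t)>0$, $\vol(X,\eta_t)=C(n,[\omega_0])$, and $\mathrm{diam}(X,\eta_t)\leq C(\omega_0,\alpha)$, and then invokes Theorem~\ref{thm:LS} directly. Your write-up is, if anything, slightly more explicit in spelling out the polarized structure $(X,\eta_t,L,h'_t)$ and the precise chain of dependencies for the constants.
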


To finish the proof of Theorem \ref{theorem:main-tesult}, it is enough to use the following simple fact.
\begin{lemma}
\label{lemma:equiv-of-Bergman-kernel}
For each $k\in\mathbb{N}$, there exists a constant $C=C(n,k,\omega_0,\alpha)>0$ such that
$$C^{-1}\rho_{\eta_t,k}\leq\rho_{\omega_t,k} \leq C\rho_{\eta_t,k}$$
along the flow \eqref{equation:generalized KRF}.
\end{lemma}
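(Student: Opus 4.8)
The plan is to compare the two Bergman kernels $\rho_{\eta_t,k}$ and $\rho_{\omega_t,k}$ by comparing the two $L^2$ inner products on $H^0(X,L^k)$ that define them. The only data entering these inner products are the Hermitian metric on $L^k$ and the volume form; since $\eta_t$ and $\omega_t$ both lie in $[\omega_0] = 2\pi c_1(L)$, the same Hermitian metric $h_t^k$ can be used for both (up to a bounded factor), so the discrepancy is entirely controlled by the ratio of volume forms $\eta_t^n/\omega_t^n$ together with the ratio of the pointwise norms coming from the two K\"ahler potentials.

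First I would write $\eta_t = \omega_t - \idd\phi_t$ as in \eqref{equation:eta=omega-idd-phi}, so that the potential difference $\phi_t$ is exactly the quantity bounded in Lemma \ref{lemma:Yau-osc-estimate}: $\osc_X\phi_t \le C(n,\omega_0,\alpha)$. Fixing the Hermitian metric $\tilde h_t$ on $L$ with $-\idd\log\tilde h_t = \eta_t$, one has $\tilde h_t = e^{\phi_t} h_t$ up to a constant, hence for any section $s\in H^0(X,L^k)$ the pointwise norms satisfy $|s|^2_{\tilde h_t^k} = e^{k\phi_t}|s|^2_{h_t^k}$ up to a constant, which is pinched between $e^{-k\,\osc\phi_t}$ and $e^{k\,\osc\phi_t}$ times $|s|^2_{h_t^k}$ after normalizing the constant away. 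For the volume forms, the Monge-Amp\`ere equation \eqref{equation:MA-equation} gives $\eta_t^n = e^{f_{\omega_t}}\omega_t^n$, and $|f_{\omega_t}|$ is uniformly bounded by Theorem \ref{theorem:Perelman-estimate-along-generalized KRF}; thus $\eta_t^n$ and $\omega_t^n$ are uniformly comparable. Combining, the two inner products $\langle\cdot,\cdot\rangle_{\eta_t}$ and $\langle\cdot,\cdot\rangle_{\omega_t}$ on $H^0(X,L^k)$ are comparable up to a constant $C = C(n,k,\omega_0,\alpha)$ (the $k$-dependence entering through $e^{k\,\osc\phi_t}$), and similarly the pointwise-norm functionals used in \eqref{equation:definition-Bergman-Kernel} are comparable up to the same type of constant.

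Finally, to pass from comparability of inner products to comparability of Bergman kernels, I would use the standard extremal characterization
$$\rho_{\omega,k}(x) = \sup\left\{\,|s|^2_{h^k}(x)\ :\ s\in H^0(X,L^k),\ \|s\|_{L^2(\omega)}^2 = 1\,\right\},$$
which is independent of the choice of orthonormal basis. If $C^{-1}\|s\|^2_{L^2(\eta_t)} \le \|s\|^2_{L^2(\omega_t)} \le C\|s\|^2_{L^2(\eta_t)}$ and $C^{-1}|s|^2_{\tilde h_t^k}(x) \le |s|^2_{h_t^k}(x) \le C|s|^2_{\tilde h_t^k}(x)$ for all $s$ and $x$, then taking suprema immediately yields $C^{-2}\rho_{\eta_t,k}(x) \le \rho_{\omega_t,k}(x) \le C^2\rho_{\eta_t,k}(x)$, which is the claim after renaming the constant. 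There is no real obstacle here; the only point requiring a little care is bookkeeping the constants relating $h_t$ and $\tilde h_t$ (they differ by $e^{\phi_t}$ only up to an additive constant in the potential, which must be absorbed), and making explicit that the constant is allowed to depend on $k$ through the factor $e^{k\,\osc_X\phi_t}$ — this is precisely why the lemma is stated for each fixed $k$ rather than uniformly in $k$. Together with Corollary \ref{theorem:PC0-for-eta}, this lemma completes the proof of Theorem \ref{theorem:main-tesult}.
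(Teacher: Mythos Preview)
Your proof is correct and follows essentially the same route as the paper's: the paper relates the two Hermitian metrics via $\widetilde{h_t}=e^{f_{\eta_t}}h_t$ and invokes Corollary \ref{corollary:bound-on-ricci-potential-for-eta}, while you use $\tilde h_t=e^{\phi_t}h_t$ and invoke Lemma \ref{lemma:Yau-osc-estimate}, but since $f_{\eta_t}$ and $\phi_t$ differ only by an additive constant these are the same argument. Your explicit use of the extremal characterization of $\rho_{\omega,k}$ simply makes precise the step the paper summarizes as ``it follows easily from the definition of Bergman kernel.''
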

\begin{proof}
Given a family of smooth Hermitian metrics $\{h_t\}$ on $L$ with
$$-\idd\log h_t=\omega_t,$$
we consider
$$\widetilde{h_t}:=e^{f_{\eta_t}}h_t.$$
Then it is clear that
$$-\idd\log\widetilde{h_t}=\eta_t.$$
Now recall that we have the following uniform control (cf. Theorem \ref{theorem:Perelman-estimate-along-generalized KRF} and Corollary \ref{corollary:bound-on-ricci-potential-for-eta})
$$|f_{\omega_t}|+|f_{\eta_t}|\leq C(n,\omega_0,\alpha).$$
So the Hermitian metrics $h_t$ and $\widetilde{h_t}$ are fiber-wise comparable. Meanwhile, as volume forms, $\omega_t^n$ and $\eta^n_t$ are comparable as well (recall \eqref{equation:MA-equation}). So it follows easily from the definition of Bergman kernel that $\rho_{\omega_t,k}$ and $\rho_{\eta_t,k}$ are comparable as desired.
\end{proof}

\subsection{Proof of Theorem \ref{theorem:PC0-for-scalar-lower-bound}}
\label{section:Jiang}
\hfill\\
In this part we give an application of Theorem \ref{theorem:main-tesult}, following Jiang's work \cite{J16} closely (see also \cite{JWZ17}).
Our setup is as follows. Let $X$ be an $n$-dimensional compact K\"ahler manifold with an ample line bundle $L$. Let $\omega\in 2\pi c_1(L)$ be an K\"ahler form with $V=\int_X\omega^n$ being the volume.  Suppose that $(X,\omega)$ satisfies the following two conditions:
\begin{itemize}
    \item there exists a closed nonnegative $(1,1)$ form $\alpha\in2\pi c_1(-K_X-L)$ such that the scalar curvature $R(\omega)$ satisfies $R(\omega)-\tr_{\omega}\alpha\geq-\Lambda$ for some constant $\Lambda\geq0$;
    \item$(X,\omega)$ satisfies the following $L^2$-Sobolev inequality: 
    \begin{equation}
        \label{equation:strong-Sobolev-ineq}
        \bigg(\int_X|f|^{\frac{2n}{n-1}}\omega^n\bigg)^{\frac{n-1}{n}}\leq C_S\int_X|\nabla f|^2\omega^n
    \end{equation}
    for any $f\in W^{1,2}(X)$ with $\int_Xf\omega^n=0$.
\end{itemize}
Then we have the following partial $C^0$ estimate:
\begin{theorem}(Theorem \ref{theorem:PC0-for-scalar-lower-bound})
\label{theorem:Jiang}
One has
$$\rho_{\omega,k}\geq b>0$$
for some $k,b$ only depending on $n,V,\Lambda$ and $C_S$. 
\end{theorem}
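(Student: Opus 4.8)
The plan is to reduce Theorem \ref{theorem:Jiang} to Theorem \ref{theorem:main-tesult} by running the generalized K\"ahler--Ricci flow \eqref{equation:generalized KRF} backwards in time: given $(X,\omega,\alpha)$ as in the hypotheses, I will regard $\omega$ as the metric $\omega_{t_0}$ occurring at some definite time $t_0$ along a flow, and then control the partial $C^0$ estimate at the initial time using Theorem \ref{theorem:main-tesult} applied with initial data whose geometry is bounded in terms of $n,V,\Lambda,C_S$. Following Jiang \cite{J16}, the key point is that the scalar curvature lower bound $R(\omega)-\tr_\omega\alpha\geq-\Lambda$ together with the Sobolev inequality \eqref{equation:strong-Sobolev-ineq} is exactly the input needed to control the generalized Ricci potential $f_\omega$ of \eqref{equation:generalized-Ricci-potential}: indeed $-\Delta_\omega f_\omega = \tr_\omega\Ric(\omega)-n-\tr_\omega\alpha = R(\omega)-\tr_\omega\alpha - n \geq -\Lambda - n$, so $\Delta_\omega f_\omega$ is bounded above, and combined with the normalization $\int_X e^{f_\omega}\omega^n = V$ and the Sobolev inequality one gets via Moser iteration (as in Lemma \ref{lemma:Yau-osc-estimate}) a uniform bound $|f_\omega|+|\nabla f_\omega| \leq C(n,V,\Lambda,C_S)$, and likewise a bound on $|\Delta f_\omega|$ once one differentiates.

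With such bounds in hand, the next step is to use $\omega$ as the \emph{initial metric} $\omega_0$ of the flow \eqref{equation:generalized KRF} (with the given $\alpha\in 2\pi(c_1(X)-c_1(L))$, which is available since $\alpha\in 2\pi c_1(-K_X-L)$). Then Theorem \ref{theorem:main-tesult} directly yields $\rho_{\omega_t,k}\geq b>0$ along the flow for $k,b$ depending only on $n$, $[\omega_0]$, $\alpha$, and the quantities $|\nabla f_{\omega_0}|$, $|\Delta f_{\omega_0}|$, the $L^2$-Sobolev constant of $(X,\omega_0)$ and $V$ (tracking the dependence through Theorem \ref{theorem:Perelman-estimate-along-generalized KRF} and Theorem \ref{theorem:Sobolev-inequality}, whose constants depend exactly on this data). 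Taking $t=0$ gives $\rho_{\omega,k}\geq b>0$ with $k,b=k,b(n,V,\Lambda,C_S)$, which is the assertion. The only subtlety is to confirm that the Sobolev inequality \eqref{equation:strong-Sobolev-ineq} (for mean-zero functions, with no $\int u^2$ term) together with the $f_\omega$-bound implies the full Sobolev inequality of the form appearing in Theorem \ref{theorem:Sobolev-inequality}, i.e. one must absorb the zeroth-order term; this is routine since $\int_X u^2\omega^n$ is controlled by $\|u\|_{L^{2n/(n-1)}}^2$ and $V$ after splitting off the average of $u$.

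The main obstacle I expect is \textbf{bookkeeping the constant dependencies}: one must verify that none of the intermediate estimates — the Moser iteration for $f_\omega$, the Perelman-type bounds of Theorem \ref{theorem:Perelman-estimate-along-generalized KRF}, the Sobolev propagation of Theorem \ref{theorem:Sobolev-inequality}, the oscillation and diameter bounds of Proposition \ref{theorem:osc-diam-bound}, and finally Theorem \ref{thm:LS} — secretly depends on anything beyond $n$, $V$, $\Lambda$ and $C_S$. In particular one must check that the class $[\omega_0]$ and the form $\alpha$ enter only through $V=(2\pi c_1(L))^n$ and through $\Lambda$ (which encodes $\tr_\omega\alpha$), and that the flow exists and the metric $\eta_t$ from \eqref{equation:Calabi-Yau} is well-defined with the stated bounds. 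Once the dependency chain $\,(R(\omega)-\tr_\omega\alpha\geq-\Lambda,\ C_S,\ V)\ \Longrightarrow\ (|\nabla f_{\omega_0}|,|\Delta f_{\omega_0}|\ \text{bounded})\ \Longrightarrow\ \text{Theorem \ref{theorem:main-tesult} applies}$ is established, the result follows by evaluating at $t=0$.
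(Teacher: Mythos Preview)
Your proposal has a genuine gap at the very first step: you cannot obtain bounds on $|\nabla f_\omega|$ and $|\Delta f_\omega|$ at the initial time from the hypotheses $R(\omega)-\tr_\omega\alpha\geq-\Lambda$ and the Sobolev inequality alone. The inequality $R(\omega)-\tr_\omega\alpha\geq-\Lambda$ gives only the \emph{one-sided} bound $\Delta_\omega f_\omega\leq n+\Lambda$; there is no mechanism in your argument producing a lower bound for $\Delta_\omega f_\omega$ (``once one differentiates'' does not do this), and without a Ricci lower bound the Bochner formula does not yield a usable differential inequality for $|\nabla f_\omega|^2$, so Moser iteration cannot control the gradient either. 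Consequently the constants in Theorems \ref{theorem:Perelman-estimate-along-generalized KRF}, \ref{theorem:Sobolev-inequality} and hence in Theorem \ref{theorem:main-tesult}---which depend explicitly on $|\nabla f_{\omega_0}|$ and $|\Delta f_{\omega_0}|$---are \emph{not} controlled by $n,V,\Lambda,C_S$, and evaluating at $t=0$ does not give what you want.

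This is precisely why the paper (following \cite{J16}) proceeds differently. The two missing ingredients are: (i) the \emph{parabolic regularization} estimate $|\Delta f_{\omega_t}|+|\nabla f_{\omega_t}|^2\leq C\,t^{-(n+1)}$ for $t\in(0,1]$, obtained from the scalar lower bound, the Sobolev inequality \eqref{equation:strong-Sobolev-ineq}, and the Green's function lower bound of Lemma \ref{lemma:green-function-lower-bound}; this bounds the needed quantities only for $t$ bounded away from $0$, so Theorem \ref{theorem:main-tesult} is applied on the time interval $[\tfrac12,1]$, not at $t=0$; and (ii) a separate comparison $\rho_{\omega,k}\geq C^{-1}\rho_{\omega_t,k}$ between the Bergman kernels at time $0$ and time $t\in[\tfrac12,1]$ (as in \cite[Theorem 5.8]{J16}), which transfers the estimate back to the initial metric. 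Your outline skips both steps by assuming the $t=0$ bounds exist, which they need not.
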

The proof of this result is essentially contained in \cite{J16} and there are two main ingredients--- the regularization property of Ricci flows and the fact that the Bergman kernels at different time slices are comparable.
Note that similar argument was also exploited in \cite{JWZ17} by Jiang-Wang-Zhu. A simple observation is that, all the estimates in \cite{J16} hold analogously for the generalized KRF if we replace the scalar curvature $R$ by the twisted scalar curvature $R-\tr_\omega\alpha$ in the argument. And the proof of Theorem \ref{theorem:Jiang} is morally the same as the one for \cite[Theorem 1.5]{J16}. Note that in the statement of \cite[Theorem 1.5]{J16}, one can replace Ricci lower bound and diameter upper bound by other geometric conditions, since these two bounds are essentially used to get the lower bound of scalar curvature, the Sobolev inequality and the lower bound of Green's function. In our setting we use the inequality \eqref{equation:strong-Sobolev-ineq} to replace the bounds on Ricci and diameter and the argument in \cite{J16} works identically for our purpose. So in the following we only outline the proof, omitting some details.

We consider the generalized K\"ahler-Ricci flow
\begin{equation*}
    \frac{\partial}{\partial t}\omega_t=-\Ric(\omega_t)+\omega_t+\alpha
\end{equation*}
starting from $\omega$. Then the lower bound of $R(\omega)-\tr_\omega\alpha$ and the Sobolev inequality \eqref{equation:strong-Sobolev-ineq} will give us the following Sobolev inequality along the flow:
\begin{equation}
    \label{equation:Ye-Sobolev-along-flow}
    \bigg(\int_X|f|^{\frac{2n}{n-1}}\omega_t^n\bigg)^{\frac{n-1}{n}}\leq A\bigg(\int_X|\nabla f|^2\omega^n_t+(R(\omega_t)-\tr_{\omega_t}\alpha+B)\int_Xf^2\omega_t^2\bigg)
\end{equation}
for any $f\in W^{1.2}(X)$, where $A,B$ are positive constants only depending on $n,V,\Lambda$ and $C_S$ (cf. \cite{Y15,Z07,CS16}). Then we can follow the argument in \cite[Section 2,3]{J16} (see also \cite{JWZ17}) to deduce that
   $$ |\Delta f_{\omega_t}|+|\nabla f_{\omega_t}|^2\leq\frac{C}{t^{n+1}},\ t\in(0,1],$$
where $f_{\omega_t}$ is the generalized Ricci potential along the flow and the constant $C$ only depends on $n,V,\Lambda$ and $C_S$. Note that the lower bound of Green's function for $(X,\omega)$ is also involved in this estimate (see \cite[(3.6)]{J16}), which can be controlled in our setting by $n, V$ and $C_S$ (cf. Lemma \ref{lemma:green-function-lower-bound}). Then applying Theorem \ref{theorem:Perelman-estimate-along-generalized KRF} to the flow $\{\omega_t\}$ for $t\in[\frac{1}{2},1]$, we obtain
$$|f_{\omega_t}|+ |R(\omega_t-\tr_{\omega_t}\alpha)|^2\leq C,\ t\in[\frac{1}{2},1],$$
where $C$ only depends on $n,V,\Lambda$ and $C_S$. Now we can go through the proof of Theorem \ref{theorem:main-tesult} to find that 
$$\rho_{\omega_t,k}\geq b>0,\ t\in[\frac{1}{2},1]$$
for some $k,b$ only depending on $n,V,\Lambda$ and $C_S$. Finally, following the proof of \cite[Theorem 5.8]{J16}, we get
$$\rho_{\omega,k}\geq C^{-1}\rho_{\omega_t,k}\geq C^{-1}b>0,
 t\in[\frac{1}{2},1]$$
for some constant $C=C(n,V,\Lambda,C_S)>0$. So we finish the proof of Theorem \ref{theorem:Jiang}.

\begin{remark}
Form the proof of Theorem \ref{theorem:Jiang}, one can see that the non-negative form $\alpha$ itself only appears as an auxiliary term and does not play much roles in the argument. So it is likely that the condition on the lower bound of $R(\omega)-\tr_\omega\alpha$ can be replaced by other geometric conditions.
\end{remark}

\appendix
\section{Lower bound of Green's function}
\begin{lemma}[\cite{P12}]
\label{lemma:green-function-lower-bound}
Let $(X,g)$ be an $m$-dimensional Riemannian manifold ($m\geq3$) such that
the following $L^2$-Sobolev inequality holds: 
    \begin{equation}
        \label{equation:strong-Sobolev-ineq}
        \bigg(\int_X|f|^{\frac{2m}{m-2}}dV_g\bigg)^{\frac{m-2}{m}}\leq C_S\int_X|\nabla f|^2dV_g
    \end{equation}
    for any $f\in W^{1,2}(X)$ with $\int_XdV_g=0$.
Then the Green's function $G(x,y)$ of $(X,g)$ is bounded from below by a constant only depending on $m,\operatorname{Vol}(X,g)$ and $C_S$.
\end{lemma}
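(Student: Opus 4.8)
The plan is to prove the Green's function lower bound by combining the classical representation of the Green's function as an integral of the heat kernel with a sharp on-diagonal heat kernel upper bound, the latter being a direct consequence of the Sobolev inequality via Davies--Nash--Moser theory. First I would recall that for a compact Riemannian manifold the (zero-average) Green's function admits the representation
\begin{equation*}
G(x,y)=\int_0^\infty\Big(H(t,x,y)-\frac{1}{\operatorname{Vol}(X,g)}\Big)\,dt,
\end{equation*}
where $H(t,x,y)$ is the heat kernel of $(X,g)$. The heat kernel is everywhere positive, so the short-time behavior is harmless in the sense that it can only make $G$ \emph{larger}; the only issue is to bound the contribution from the region where $H$ is not yet close to the uniform measure.

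The key input is the on-diagonal bound $H(t,x,x)\le C(m,C_S)\,t^{-m/2}$ for all $t>0$ (with a constant independent of the large-time behavior), which follows from the $L^2$-Sobolev inequality \eqref{equation:strong-Sobolev-ineq} by the standard Nash/Moser iteration argument: the Sobolev inequality gives a Nash inequality $\|u\|_2^{2+4/m}\le C_S\|\nabla u\|_2^2\,\|u\|_1^{4/m}$ for mean-zero $u$, and a Gronwall argument on $t\mapsto\|H(t,x,\cdot)-\tfrac1{V}\|_2^2$ then yields the ultracontractive estimate. By Cauchy--Schwarz this upgrades to $|H(t,x,y)-\tfrac1V|\le C\,t^{-m/2}e^{-\lambda_1 t}$ for $t\ge 1$ after using the spectral gap (which itself follows from Sobolev), and for $t\le 1$ one simply uses $0\le H(t,x,y)$ together with $H(t,x,y)\le\sqrt{H(t,x,x)H(t,y,y)}\le C t^{-m/2}$. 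Splitting the integral at $t=1$, the piece over $(1,\infty)$ is bounded in absolute value by $C\int_1^\infty t^{-m/2}e^{-\lambda_1 t}\,dt$, which is finite and controlled by $m$, $\operatorname{Vol}$, $C_S$; the piece over $(0,1)$ is bounded below by $-\int_0^1 \tfrac1V\,dt=-1/V$ since $H\ge0$ there. Adding these gives $G(x,y)\ge -C(m,\operatorname{Vol}(X,g),C_S)$, which is the claim.

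The main obstacle — really the only nontrivial point — is making the heat kernel upper bound \emph{quantitatively} depend only on $m$, $\operatorname{Vol}$, and $C_S$, with no hidden dependence on curvature or injectivity radius; this is exactly where one must be careful to run the Nash--Moser argument using only the Sobolev constant, and to extract the spectral gap $\lambda_1\ge\lambda_1(m,\operatorname{Vol},C_S)>0$ purely from the Sobolev inequality (e.g. by testing the inequality and using Hölder to interpolate $L^2$ between $L^1$ and $L^{2m/(m-2)}$). Once that uniform ultracontractivity is in hand, the rest is the routine integral estimate sketched above. Since this lemma is attributed to \cite{P12}, I would simply cite that reference for the detailed constants and present the heat-kernel argument above as the outline; alternatively one can invoke the equivalence, due to Davies, between the Faber--Krahn/Sobolev inequality and the on-diagonal heat kernel bound, and then the Green's function estimate follows formally.
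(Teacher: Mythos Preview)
Your proposal is correct and follows essentially the same route as the paper: represent $G$ as $\int_0^\infty (H-\tfrac1V)\,dt$, derive an on-diagonal bound $|H(t,x,y)-\tfrac1V|\le C(m,C_S)\,t^{-m/2}$ from the Sobolev inequality (the paper does this via the ODE for $H_1(x,x,2t)=\|H_1(x,\cdot,t)\|_2^2$ and H\"older, which is exactly the Nash-type argument you describe), then integrate. One small remark: your invocation of the spectral gap $e^{-\lambda_1 t}$ for large $t$ is unnecessary, since $m\ge 3$ already makes $t^{-m/2}$ integrable on $[1,\infty)$; the paper simply uses that. Your treatment of the small-time piece via $H\ge 0$ (giving $\int_0^1 H_1\,dt\ge -1/V$) is in fact more explicit than the paper's sketch, which states only the $t^{-m/2}$ bound and leaves the combination with positivity implicit.
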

\begin{proof}
We sketch a proof for reader's convenience. Let
$$H(x,y,t):=\frac{1}{V}+\sum_{i=1}^\infty e^{-\lambda_it}\phi_i(x)\phi_i(y)$$
be the heat kernel of $(X,g)$. Here $\lambda_1\leq\lambda_2\leq...$ are the eigenvalues of the Laplacian $\Delta$ of $g$ and $\phi_i$'s are the corresponding eigenfunctions (i.e. $\Delta\phi_i=-\lambda_i\phi_i$) such that
$$\int_X\phi_i\phi_jdV_g=\delta_{ij}.$$
Then the Green's function $G(x,y)$ is given by
$$G(x,y):=\int_0^\infty(H(x,y,t)-\frac{1}{V})dt.$$
To get a lower bound of $G(x,y)$, it suffices to prove the following standard fact:
$$\bigg|H(x,y,t)-\frac{1}{V}\bigg|\leq\frac{C(n,C_S)}{t^{m/2}},\ t>0.$$

To this end, we put
$$H_1(x,y,t):=H(x,y,t)-\frac{1}{V}=\sum_{i=1}^\infty e^{-\lambda_it}\phi_i(x)\phi_i(y).$$
Then it is easy to verify
$$H_1(x,x,2t)=\int_XH_1(x,y,t)^2dV_g(y).$$
Taking time derivative, we get
\begin{equation}
\label{equation:dt-of-H=-2-int}
    \partial_tH_1(x,x,2t)=-2\int_X|\nabla_yH_1(x,y,t)|^2dV_g(y).
\end{equation}
On the other hand we have
\begin{equation*}
    \begin{aligned}
        H_1(x,x,2t)&=\int_XH_1^2(x,y,t)dV_g(y)\\
        &\leq\bigg(\int_X|H_1(x,y,t)|^{\frac{2m}{m-2}}dV_g(y)\bigg)^{\frac{m-2}{m+2}}\bigg(\int_X|H_1(x,y,t)|dV_g(y)\bigg)^{\frac{4}{m+2}}\\
        &\leq2^{\frac{4}{m+2}}\bigg(\int_X|H_1(x,y,t)|^{\frac{2m}{m-2}}dV_g(y)\bigg)^{\frac{m-2}{m+2}}.\\
    \end{aligned}
\end{equation*}
Combining this with \eqref{equation:strong-Sobolev-ineq} and \eqref{equation:dt-of-H=-2-int}, we get
$$\partial_tH_1(x,x,2t)\leq -C(m,C_S)H_1(x,x,2t)^{\frac{m+2}{m}},$$
so that
$$\partial_t(H_1(x,x,t)^{-\frac{2}{m}})\geq C(m,C_S).$$
Integrating this from $\epsilon$ to $t$ and using the asymptotic behavior $H_1(x,x,\epsilon)^{-\frac{2}{m}}\rightarrow0$ as $\epsilon\rightarrow0$, we arrive at
$$H_1(x,x,t)\leq\frac{C(m,C_S)}{t^{m/2}},\ t>0.$$
Now using the fact $\big|H_1(x,y,t)\big|\leq H_1(x,x,t)^{\frac{1}{2}}H_1(y,y,t)^{\frac{1}{2}}$, we obtain
$$\bigg|H(x,y,t)-\frac{1}{V}\bigg|\leq\frac{C(m,C_S)}{t^{m/2}},\ t>0.$$
\end{proof}


\begin{thebibliography}{ZZZZZZZ}

\bibitem{B16}
R. Bamler, Convergence of Ricci flows with bounded scalar curvature, Ann. of Math. (2) \textbf{188} (2018), 753-831. 

\bibitem{C97}
D. Catlin, The Bergman kernel and a theorem of Tian. Analysis and geometry in several complex variables (Katata, 1997), 1-23, Trends Math., Birkh\"auser Boston, Boston, MA, 1999.

\bibitem{CW12}
X.X. Chen, B, Wang,
Space of Ricci flows I, Comm. Pure Appl. Math. \textbf{65} (2012), 1399-1457. 

\bibitem{CW14}
X.X. Chen, B. Wang, Space of Ricci flows II, preprint, arXiv:1405.6797 (2014).


\bibitem{CS16}
T. Collins, G. Sz\'ekelyhidi, The twisted K\"ahler-Ricci flow, J. Reine Angew. Math. \textbf{716} (2016), 179-205.


\bibitem{DS14}
S. Donaldson and S. Sun, Gromov-Hausdorff limits of K\"ahler manifolds and algebraic
geometry, Acta Math. \textbf{213} (2014), 63-106.

\bibitem{DS17}
S. Donaldson and S. Sun, Gromov-Hausdorff limits of K\"ahler manifolds and algebraic geometry, II. J. Differential Geom. \textbf{107} (2017), 327-371.

\bibitem{F98}
A. Futaki, K\"ahler-Einstein metrics and integral invariants, Lecture notes in mathematics,
Springer-Verlag, 1988.

\bibitem{PSS18}
B. Guo, D.H. Phong, J. Song, J. Sturm, Compactness of K\"ahler-Ricci solitons on Fano manifolds, 	arXiv:1805.03084 (2018).

\bibitem{H00}
E. Hebey, Nonlinear analysis on manifolds: Sobolev spaces and inequalities,
Courant Lecture Notes in Mathematics., vol. 5, American Mathematical Society,
2000, xii+290 pages.


\bibitem{J16}
W. Jiang, Bergman kernel along the K\"ahler-Ricci flow and Tian's conjecture, J. Reine Angew. Math. \textbf{717} (2016), 195-226.

\bibitem{JWZ17}
W. Jiang, F. Wang, X. Zhu,
Bergman kernels for a sequence of almost Kähler-Ricci solitons. Ann. Inst. Fourier. \textbf{67} (2017), 1279-1320.

\bibitem{LS18}
G. Liu, G. Sz\'ekelyhidi, Gromov-Hausdorff limits of K\"ahler manifolds with Ricci curvature bounded below, preprint, arXiv:1804.08567 (2018).


\bibitem{L13}
J. Liu, The generalized K\"ahler Ricci flow, J. Math. Anal. Appl. \textbf{408} (2013), 751-761

\bibitem{L00}
Z. Lu, On the lower order terms of the asymptotic expansion of Tian-Yau-Zelditch. Amer. J. Math. \textbf{122} (2000), 235-273.

\bibitem{P12}
P. Li, Geometric Analysis, Cambridge University Press, 2012.

\bibitem{PSS15}
D.H. Phong, J. Song, J. Sturm, Degeneration of K\"ahler-Ricci solitons on Fano manifolds, Univ. Iagel. Acta Math. \textbf{52} (2015), 29-43.



\bibitem{R98}
W. Ruan, Canonical coordinates and Bergman metrics, Comm. Anal. Geom. \textbf{6} (1998), 589--631.

\bibitem{R08}
Y.A. Rubinstein, Some discretizations of geometric evolution equations and the Ricci iteration on the space of K\"ahler metrics. Adv. Math. \textbf{218} (2008), 1526-1565.

\bibitem{SesumTian}
N. Sesum, G. Tian, Bounding scalar curvature and diameter along the K\"ahler
Ricci Flow (after Perelman), J. Inst. of Math. Jussieu, \textbf{7} (2008), 575-587.


\bibitem{SongWeinkov}
J. Song, B. Weinkove, An introduction to the K\"ahler-Ricci flow, 89-188, Lecture Notes in Math., 2086, Springer, Cham, 2013.


\bibitem{T}
G. Tian, 
On a set of polarized Kähler metrics on algebraic manifolds. J. Differential Geom. \textbf{32} (1990), 99-130.

\bibitem{T90'}
G. Tian, K\"ahler-Einstein metrics on algebraic manifolds, Proc. of Int. Congress of Math.
Kyoto, 1990.

\bibitem{T90}
G. Tian, On Calabi's conjecture for complex surfaces with positive first Chern class,
Invent. Math. \textbf{101} (1990), 101-172.

\bibitem{T13}
G. Tian, Partial $C^0$-estimates for K\"ahler-Einstein metrics, Commun. Math. Stat.
1 (2013), \textbf{2}, 105-113.

\bibitem{TZ12}
G. Tian, Z.L. Zhang, Degeneration of K\"ahler-Ricci solitons, Int. Math. Res. Not. IMRN
(2012), 957-985.

\bibitem{TZ16}
G. Tian, Z. Zhang, Regularity of K\"ahler-Ricci flows on Fano manifolds, Acta Math. \textbf{216} (2016), 127-176.


\bibitem{TianZhu07}
G. Tian, X. Zhu, Convergence of K\"ahler-Ricci flow, J. Amer. Math. Soc.,
\textbf{20} (2007), 675-699.


\bibitem{Y15}
R. Ye, The logarithmic Sobolev and Sobolev inequalities along the Ricci flow, Commun. Math. Stat. \textbf{3} (2015), 1-36.

\bibitem{Z98}
S. Zelditch, Szeg\"o kernels and a theorem of Tian. Internat. Math. Res. Notices 1998, 317-331.

\bibitem{Z07}
Q.S. Zhang, A uniform Sobolev Inequality under the Ricci flow, Inter. Math. Res.
Not. (2007), 17p.

\bibitem{ZZ14}
X. Zhang, X.W. Zhang, Generalized K\"ahler-Einstein metrics and energy functionals, Canad. J.
Math. \textbf{66} (2014) 1413-1435.

\bibitem{Z10}
Z.L. Zhang, Degeneration of shrinking Ricci solitons, Int. Math. Res. Not. IMRN (2010),
4137-4158.

% \bibitem{Cao85}
% H.-D. Cao, Deformation of K\"ahler metrics to K\"ahler-Einstein metrics on compact
% K\"ahler manifolds, Invent. Math. \textbf{81} (1985), no.2, 359-372.

% \bibitem{CCT02}
%  J. Cheeger, T. Colding, G. Tian, On the singularities of spaces with bounded Ricci curvature, Geom. Funct. Anal. \textbf{12} (2002), 873-914.


% \bibitem{D16}
% R. Dervan,
% Uniform stability of generalized constant scalar curvature K\"ahler metrics, Int. Math. Res. (2016), 4728-4783.

% \bibitem{D02}
% S. Donaldson, Scalar curvature and stability of toric varieties, J. Diff.
% Geom., 62 (2002), 289-349.











% \bibitem{SongTian}
% J. Song, G. Tian,
% The K\"ahler-Ricci flow through singularities, Invent. Math. \textbf{207} (2017), 519-595.



% \bibitem{T87}
% G. Tian, On K\"ahler-Einstein metrics on certain K\"̈ahler manifolds with $C_1(M)>0$, Invent. Math. \textbf{89} (1987), 225-246.


% \bibitem{T90}
% G. Tian, On Calabi's conjecture for complex surfaces with positive first Chern class, Invent. Math. \textbf{101} (1990), 101-172.

% \bibitem{T94}
% G. Tian, K\"ahler-Einstein metrics on algebraic manifolds, Transcendental methods in algebraic geometry (Cetraro, 1994), 143-185, Lecture Notes in Math., 1646, Springer, Berlin, 1996.

% \bibitem{T97}
% G. Tian, K\"ahler-Einstein metrics with positive scalar curvature, Invent.
% Math., 130 (1997), 1-39.

% \bibitem{T15}
% G. Tian, K-stability and K\"ahler-Einstein metrics, Comm. Pure Appl. Math., \textbf{68} (2015), 1085-1156.



% \bibitem{TZZZ13}
% G. Tian, S. Zhang, Z. Zhang, X. Zhu, Perelman's entropy and K\"ahler-Ricci flow on a Fano manifold, Trans. Amer. Math. Soc. \textbf{365} (2013), 6669-6695. 




\end{thebibliography}
\end{document}